\newcommand{\bw}{{\bf w}}
\newcommand{\calI}{\mathcal{I}}
\newcommand{\bcalH}{\boldsymbol{\mathcal{H}}}
\newcommand{\col}[1]{#1}
\newtheorem{assumption}{Assumption}
\newcommand{\ourqed}{\qed}
\newcommand{\bcH}{\boldsymbol{\mathcal{H}}}
\begin{document}

\title{
	\col{Projective Splitting with Forward Steps: Asynchronous and Block-Iterative Operator Splitting}%: 
%	\col{Versatile} Block-Iterative Operator Splitting
	%\thanks{Grants or other notes
%about the article that should go on the front page should be
%placed here. General acknowledgments should be placed at the end of the article.}
}

%\titlerunning{Short form of title}        % if too long for running head

\author{Patrick R. Johnstone         \and
        Jonathan Eckstein
}

%\authorrunning{Short form of author list} % if too long for running head

\institute{
	Patrick R. Johnstone and Jonathan Eckstein \at 
	Department of Management Sciences and Information Systems\\ 
	Rutgers Business School Newark and New Brunswick, Rutgers University\\
	\email{patrick.r.johnstone@gmail.com, jeckstei@business.rutgers.edu}
}

\date{Received: date / Accepted: date}
% The correct dates will be entered by the editor

%%%%%%%%%%%%%%%%%%%% NOTES on difference between Mathprogramming and arxiv
%math programming version has different scale factor for the plot in exampleIntuition.tex
%math programming: must add \qed after every proof
%to submit you will probably have to comment out \graphicspath{{../finalPlotsv2/}} 

\maketitle
\allowdisplaybreaks

\begin{abstract}
This work is concerned with the classical problem of finding a zero of a sum
of maximal monotone operators. For the projective splitting framework recently
proposed by Combettes and Eckstein, we show how to replace the fundamental
subproblem calculation using a backward step with one based on two forward
steps. The resulting algorithms have the same kind of coordination procedure
and can be implemented in the same block-iterative and \col{highly flexible} manner, but may perform backward steps on
some operators and forward steps on others. Prior algorithms in the projective
splitting family have used only backward steps. Forward steps can be used for
any Lipschitz-continuous operators provided the stepsize is bounded by the
inverse of the Lipschitz constant. If the Lipschitz constant is unknown, a
simple backtracking linesearch procedure may be used. For affine operators,
the stepsize can be chosen adaptively without knowledge of the Lipschitz
constant and without any additional forward steps.
We close the paper by
empirically studying the performance of several kinds of splitting
algorithms on a \col{large-scale rare feature selection problem}. 
\end{abstract}

\section{Introduction}
For a collection of real Hilbert spaces $\{\calH_i\}_{i=0}^{n}$, consider the problem of finding $z\in\calH_{0}$
such that 
\begin{eqnarray}\label{ProbMono}\label{probMono}
0\in \sum_{i=1}^{n} G_i^* T_i(G_i z),
\end{eqnarray}
where $G_i:\calH_{0}\to\calH_i$ are linear and bounded operators, $T_i:\calH_i \to 2^{\calH_i}$ are maximal monotone
operators and additionally there exists a subset $\Iforw\subseteq\{1,\ldots,n\}$ such that for all $i\in\Iforw$ the operator $T_i$ is Lipschitz continuous.
%with full domain.
An important instance of this problem is
\begin{eqnarray}\label{ProbOpt}
\min_{x\in\mathcal{H}_{0}} \sum_{i=1}^{n}f_i(G_i x),
\end{eqnarray}
where every $f_i:\calH_i\to\mathbb{R}$ is closed, proper and convex, with some subset of
the functions also being differentiable with Lipschitz-continuous gradients.
Under appropriate constraint qualifications, \eqref{ProbMono} and
\eqref{ProbOpt} are equivalent.  Problem~\eqref{ProbOpt} arises in a host of
applications such as machine learning, signal and image processing, inverse
problems, and computer vision; 
% to name only a few (see
see~\cite{boyd2011distributed,combettes2011proximal,combettes2005signal} 
for some examples. 
% JE removed this paragraph break to get better pagination for MP submission
Operator splitting algorithms
are now a common way to solve structured monotone inclusions such
as~\eqref{ProbMono}. Until recently, there were three underlying classes of
operator splitting algorithms: forward-backward \cite{mercier1979lectures},
Douglas/Peaceman-Rachford \cite{lions1979splitting}, and forward-backward-forward
\cite{tseng2000modified}.  In \cite{davis2015three}, Davis and Yin introduced a new
operator splitting algorithm which does not reduce to any of these methods.
Many algorithms for more complicated monotone inclusions and optimization
problems involving many terms and constraints are in fact applications of one
of these underlying techniques to a reduced monotone 
inclusion in an appropriately defined product space~\cite{chambolle2011first,komodakis2015playing,condat2013primal,briceno2011monotone+,combettes2014variable}. These
four operator splitting techniques are, in turn, a special case of the
\emph{Krasnoselskii-Mann (KM) iteration} for finding a fixed point of a
nonexpansive operator \cite{krasnosel1955two,mann1953mean}.

%The forward-backward method can find a zero of $Ax + Bx$ where $A$ is
%maximal-monotone and $B$ is a cocoercive monotone operator. It utilizes a
%backward (proximal or implicit) step w.r.t. $A$ and a forward (explicit) step
%w.r.t. $B$.  Douglas-Rachford methods can handle this problem when $B$ is not
%cocoercive so long as one can efficiently compute the proximal steps w.r.t.
%both operators. Tseng's forward-backward-forward method is similar to the
%forward backward method except it only assumes $B$ is Lipschitz continuous
%rather than cocoercive. To pay for this weaker assumption, the method
%requires an additional forward step at each iteration. Finally, Davis three
%operator splitting method extends Douglas-Rachford to allow an additional
%cocoercive operator which is accessed by a single forward step. Davis' method
%cannot directly be applied to Prob. (\ref{ProbMono}), however product-space
%techniques can make it applicable at the expense of additional variables.
%However Davis' method breaks down when some of the operators are composed
%with linear operators (see (\ref{probCompMonoMany}) below).

A different, relatively recently proposed class of operator splitting
algorithms is \emph{projective splitting}: this class has a different
convergence mechanism based on projection onto separating sets and does not in
general reduce to the KM iteration. The root ideas underlying projective
splitting can be found in
\cite{iusem1997variant,solodov1999hybrid,solodov1999new}, which dealt with
monotone inclusions with a single operator. The algorithm
of~\cite{eckstein2008family} significantly built on these ideas to address the
case of two operators and was thus the original projective ``splitting"
method. This algorithm was generalized to more than two operators
in~\cite{eckstein2009general}. The related algorithm
in~\cite{alotaibi2014solving} introduced a technique for handling compositions
of linear and monotone operators, and~\cite{combettes2016async} proposed an
extension to ``block-iterative'' and asynchronous operation ---
block-iterative operation meaning that only a subset of the operators making
up the problem need to be considered at each iteration (this approach may be
called ``incremental" in the optimization literature).  A restricted and
simplified version of this framework appears in~\cite{eckstein2017simplified}.
\col{The potentially
asynchronous and block-iterative nature of projective splitting as well as
its ability to handle composition with linear operators gives it an
unprecedented
% JE changed "the four previous" to "prior" -- before, it sounded like 3OP is
% older than projective splitting, which I don't think it is
level of flexibility compared with prior classes of operator
splitting methods.}
% , none of which can be readily implemented in an
% asynchronous or block-iterative manner.  
Further, in the projective splitting 
methods of~\cite{combettes2016async,eckstein2017simplified} the
order with which operators can be processed is deterministic, variable, and
highly flexible. It is not necessary that each operator be processed the same
number of times either exactly or approximately; in fact, one operator may be
processed much more often than another. The only constraint is that there is
an upper bound on the number of iterations between the consecutive times that
each operator is processed.

Projective splitting algorithms work by performing separate calculations
on each individual operator to construct a separating hyperplane
between the current iterate and the
problem's \emph{Kuhn-Tucker set} (essentially the set of primal and dual
solutions), and then projecting onto this hyperplane. In prior
projective splitting algorithms,
the only operation performed on the individual operators $T_i$ 
is a proximal step \col{(equivalently referred to as a resolvent or backward step)}, 
which consists of evaluating
the operator resolvents $(I + \rho T_i)^{-1}$ for
some scalar $\rho > 0$.  In this paper, we show how, for the Lipschitz continuous
operators, the same kind of framework can also make use of forward steps on
the individual operators, equivalent to applying $I - \rho T_i$.
Typically, such ``explicit'' steps are computationally much easier
than ``implicit'', proximal steps.  Our procedure requires two forward steps
each time it evaluates an operator, and in this sense is reminiscent of
Tseng's forward-backward-forward method~\cite{tseng2000modified} and 
Korpelevich's extragradient method~\cite{korpelevich1976extragradient}. 
Indeed, for the special case of only one operator, projective splitting with
the new procedure reduces to the variant of the extragradient method in
\cite{iusem1997variant} (see \cite[Section 4]{johnstone2018convergence} 
for the derivation).
\col{In our forward-step procedure,}
each stepsize must be bounded by the inverse of the Lipschitz constant of $T_i$.
However, a simple backtracking procedure can eliminate the need to estimate the
Lipschitz constant, and other options are available for 
selecting the stepsize when $T_i$ is affine.

%% JE perhaps this is a little strong... commenting it out for now
% Altogether, the innovations of this
% paper make projective splitting an operator splitting framework with
% unparalleled flexibility and applicability.

%!TEX root = forwProjMP-R1.tex
\subsection{Intuition and contributions: basic idea}
%\subsection{Basic Idea}
\label{basicidea}
We first provide some intuition into our fundamental idea of incorporating
forward steps into projective splitting.  For simplicity, consider (\ref{probMono}) without the linear operators $G_i$, 
that is, we want to find $z$ such that
$0\in \sum_{i=1}^n T_i z$, where $T_1,\ldots,T_n: \calH_0 \rightarrow 2^{\calH_0}$ are
maximal monotone operators on a single real Hilbert space $\calH_0$. 
We formulate the Kuhn-Tucker solution set of this problem as
\begin{align} 
\calS = \set{(z,w_1,\ldots,w_{n-1})}{
w_i\in T_i z,\;i=1,\ldots,n-1,
                                 -\sum_{i=1}^{n-1} w_i \in T_n z}.
                                 \label{ktsetnoGi}
\end{align}
%To deal with the case: $n=1$, define $\sum_{i=1}^0 w_i=0$. 
It is clear that $z^*$ solves $0\in \sum_{i=1}^n T_i z^*$ if and only if there
exist $w_1^*,\ldots,w_{n-1}^*$ such that $(z^*,w_1^*,\ldots,w_{n-1}^*)\in \calS$.
A separator-projection 
algorithm for finding a point in $\calS$ finds, at each iteration $k$,
a closed and convex set $H_k$ which separates $\calS$ from the current point,
meaning $\calS$ is entirely in the set and the current point is not. One can then
move closer to the solution set by projecting the current point onto the
set $H_k$.

If we define $\calS$ as in~\eqref{ktsetnoGi}, then the separator formulation 
presented in~\cite{combettes2016async} constructs 
the set $H_k$ through the function 
\begin{align} \label{simplesep}
\varphi_k(z,w_1,\ldots,w_{n-1}) &= \sum_{i=1}^{n-1}\inner{z-x^k_i}{y^k_i-w_i} 
                                    +\Inner{z-x_i^n}{y_i^n + \sum_{i=1}^{n-1} w_i}\\
      &= \Inner{z}{\sum_{i=1}^n y_i^k} + \sum_{i=1}^{n-1}\inner{x_i^k - x_n^k}{w_i}
          - \sum_{i=1}^n \inner{x_i^k}{y_i^k}, \label{simplesepaffine}
\end{align} 
for some $x_i^k,y_i^k \in \calH_0$ such that $y^k_i\in T_i x^k_i$, $i \in
1,\ldots,n$. From its expression in~\eqref{simplesepaffine} it is clear that
$\varphi_k$ is an affine function on $\calH^n_0$.  Furthermore,
it may easily be verified that for any $p = (z,w_1,\ldots,w_{n-1}) \in \calS$, one
has $\varphi_k(p)\leq 0$, so that the separator set $H_k$ may be taken to be
the halfspace $\set{p}{\varphi_k(p) \leq 0}$. The key idea of projective
splitting is, given a current iterate $p^k = (z^k,w_1^k,\ldots,w_{n-1}^k) \in
\calH_0^n$, to pick $(x^k_i,y^k_i)$ so that $\varphi_k(p^k)$ is positive if $p^k
\not\in \calS$. Then, since the solution set is entirely on the other side of
the hyperplane $\set{p}{\varphi_k(p)=0}$, projecting the current point onto
this hyperplane makes progress toward the solution.  If it can be shown that this
progress is sufficiently large, then it is possible to prove (weak) convergence.

Let the iterates of such an algorithm be $p^k = (z^k,w_i^k,\ldots,w_{n-1}^k) \in
\calH_0^n$.  
To simplify the subsequent analysis, define $w_n^k \triangleq
-\sum_{i=1}^{n-1} w_i^k$ at each iteration $k$, whence it is immediate from~\eqref{simplesep} 
that $\varphi_k(p^k) = \varphi_k(z^k,w_1^k,\ldots,w_{n-1}^k) = \sum_{i=1}^n
\inner{z^k-x^k_i}{y^k_i-w^k_i}$.
To construct a function $\varphi_k$ of the form~\eqref{simplesep} such
that $\varphi_k(p^k) = \varphi_k(z^k,w_1^k,\ldots,w_n^k) > 0$ whenever $p^k \not\in
\calS$, it is sufficient to be able to perform the following calculation on
each individual operator $T_i$: for $(z^k,w_i^k)\in \calH_0^2$, find
$x_i^k,y_i^k\in\calH_0$ such that $y_i^k\in T_i x^k_i$ and $\inner{z^k - x_i^k}{y_i^k -
w_i^k} \geq 0$, with
$\inner{z^k - x_i^k}{y_i^k - w_i^k} > 0$ if $w_i^k
\not\in T_i z^k$.  In earlier work on projective
splitting~\cite{eckstein2008family,eckstein2009general,combettes2016async,alotaibi2014solving}, the calculation 
of such a $(x_i^k,y_i^k)$ 
is accomplished by a proximal (implicit) step on the operator $T_i$: given a
scalar $\rho > 0$, we find the unique pair $(x_i^k,y_i^k)\in\calH_0^2$ such that
$y_i^k \in T_i x^k_i$ and
\begin{equation} \label{backwardidentity}
x_i^k+\rho y_i^k = z^k+\rho w_i^k 
\quad \Rightarrow \quad 
z^k-x_i^k = \rho(y_i^k-w_i^k).
\end{equation}
We immediately conclude that 
\begin{equation} \label{backwardresult}
\inner{z^k - x_i^k}{y_i^k - w_i^k} = (1/\rho)\smallnorm{z^k - x_i^k}^2 \geq 0,
\end{equation}
and furthermore that $\inner{z^k - x_i^k}{y_i^k - w_i^k} > 0$ unless $x_i^k =
z^k$, which would in turn imply that $y_i^k = w_i^k$ and $w_i^k \in T_i z^k$.  If
we perform such a calculation for each $i= 1,\ldots,n$, we have constructed a
separator of the form~\eqref{simplesep} which, in view of 
$\varphi_k(p^k) = \sum_{i=1}^n \inner{z^k-x^k_i}{y^k_i-w_i^k}$,
has $\varphi_k(p^k) > 0$ if $p^k
\not\in \calS$. This basic calculation on $T_i$ is depicted in
Figure~\ref{fig:visualize}(a) for $\calH_0 = \real^1$: 
% Minor wording change here to get a better line break
because $z^k-x_i^k =
\rho(y_i^k-w_i^k)$, the line segment between $(z^k,w_i^k)$ and $(x_i^k,y_i^k)$
must have slope $-1/\rho$, meaning that $\inner{z^k - x_i^k}{w_i^k - y_i^k}
\leq 0$ and thus that $\inner{z^k - x_i^k}{y_i^k - w_i^k} \geq 0$.  It also
bears mentioning that the relation~\eqref{backwardresult} plays (in
generalized form) a key role in the convergence proof.

Consider now the case that $T_i$ is Lipschitz continuous with modulus $L_i
\geq 0$ (and hence single valued) and defined throughout $\calH_0$.  We now
introduce a technique to accomplish something similar to the preceding
calculation through two forward steps instead of a single backward step.  We
begin by evaluating $T_i z^k$ and using this value in place of $y_i^k$ in the
right-hand equation in~\eqref{backwardidentity}, yielding
\begin{equation} \label{forwardidentity}
z^k-x_i^k = \rho\big(T_i z^k-w_i^k\big)
\quad \Rightarrow \quad
x_i^k = z^k - \rho\big(T_i z^k-w_i^k\big),
\end{equation}
and we use this value for $x_i^k$.  This calculation is depicted by the lower
left point in Figure~\ref{fig:visualize}(b).  We then calculate $y_i^k =
T_i x^k_i$, resulting in a pair $(x_i^k,y_i^k)$ on the graph of the operator;
see the upper left point in Figure~\ref{fig:visualize}(b). For this choice of $(x_i^k,y_i^k)$, we next observe that
\begin{align}
\inner{z^k - x_i^k}{y_i^k - w_i^k}
&=
\Inner{z^k - x_i^k}{T_i z^k - w_i^k} - \Inner{z^k - x_i^k}{T_i z^k - y_i^k} \nonumber \\
&=
\Inner{z^k - x_i^k}{\tfrac{1}{\rho}(z^k - x_i^k)} 
- \Inner{z^k - x_i^k}{T_i z^k - T_i x^k_i} \label{fstep1}\\
&\geq
\tfrac{1}{\rho} \norm{z^k - x_i^k}^2 - L_i \norm{z^k - x_i^k}^2 \label{fstep2} \\
&=
\left(\frac{1}{\rho} - L_i\right) \norm{z^k - x_i^k}^2. \label{fstep3}
\end{align}
Here, \eqref{fstep1} follows because $T_i z^k - w_i^k = (1/\rho)(z^k - x_i^k)$
from~\eqref{forwardidentity} and because we let $y_i^k = T_i x^k_i$. The
inequality~\eqref{fstep2} then follows from the Cauchy-Schwarz inequality and the hypothesized
Lipschitz continuity of $T_i$.  If we require that $\rho < 1/L_i$, then we
have $1/\rho > L_i$ and~\eqref{fstep3} therefore establishes that $\inner{z^k -
	x_i^k}{y_i^k - w_i^k} \geq 0$, with $\inner{z^k - x_i^k}{y_i^k - w_i^k} > 0$
unless $x_i^k = z^k$, which would imply that $w_i^k = T_i z^k$.  We thus
obtain a conclusion very similar to~\eqref{backwardresult} and the results
immediately following from it, but using the constant $1/\rho - L_i > 0$ in place of
the positive constant $1/\rho$.

\begin{figure}
% for mathprogramming need to use scale=.68. For arxiv we use scale = 0.98
\begin{center}
\begin{tikzpicture}[scale=0.68]
      \node at (-2,4) {(a)} ;
      \draw[->] (-3,-0.5) -- (4.2,-0.5) ;  
      \draw[->] (-1,-1) -- (-1,4.2) ;  
      \draw[domain=-3:4.2,smooth,variable=\x,blue,thick] 
          plot ({\x},{0.5*\x + 1 + 0.015*\x*\x*\x});
      \node at (3.5,4) {{\color{blue}$T_i$}};
      \draw [->,thick] (3,0.5) -- (2.05,2.02); 
      \node at (2.0,0.6) {{\tiny $-1/\rho$}};
      \draw [dotted] (2.2,1.74) -- (2.2,0.8) ;
      \draw [dotted] (2.2,0.8) -- (2.84,0.8) ;
      \draw [fill=black] (3,0.5) circle [radius=0.05] node[right]{$(z^k,w_i^k)$};
      \draw [fill=black] (2,2.12) circle [radius=0.05] node[right]{~~$(x_i^k,y_i^k)$};
      \node at (7,4) {(b)} ;
      \draw[->] (6,-0.5) -- (13.2,-0.5) ;  
      \draw[->] (8,-1) -- (8,4.2) ;  
      \draw[domain=6:13.2,smooth,variable=\x,blue,thick] 
          plot ({\x},{0.5*(\x - 9) + 1 + 0.015*(\x - 9)*(\x - 9)*(\x - 9)});
      \node at (12.5,4) {{\color{blue}$T_i$}};
      \draw [fill=black] (12,0.5) circle [radius=0.05] node[right]{$(z^k,w_i^k)$};
      \draw [fill=black] (12,2.90) circle [radius=0.05] 
                              node[left]{$\big(z^k,T_i z^k\big)$~~};
      \draw [->,thick] (12,0.5) -- (12,2.85) ;
      \draw [fill=black] (10,0.5) circle [radius=0.05] node[below]{$(x_i^k,w_i^k)$};
      \draw [->] (12,2.90) -- (10.05,0.55) ;
      \draw [dotted] (11,1.7) -- (11.4,1.7) -- (11.4,2.15);
      \node at (11.5,1.5) {{\tiny $1/\rho$}} ;
      \draw [fill=black] (10.0,1.51) circle [radius=0.05] node[left]{$(x_i^k,y_i^k)$~~~};
      \draw [->,thick] (10,0.5) -- (10,1.5) ;
      \draw [->,dashed] (12,0.5) -- (10.1,1.45) ;
\end{tikzpicture}
\end{center}
\caption{Backward and forward operator calculations in $\calH_0 = \real^1$.
The goal is to find a point $(x_i^k,y_i^k)$ on the graph of the operator such
that line segment connecting $(z^k,w_i^k)$ and $(x_i^k,y_i^k)$ has negative
slope. Part (a) depicts a standard backward-step-based construction, while (b)
depicts our new construction based on two forward steps. }
\label{fig:visualize}
\end{figure}

For $\calH_0 = \real^1$, this process is depicted in
Figure~\ref{fig:visualize}(b).  By construction, the line segment between
$\big(z^k,T_i z^k\big)$ and $(x_i^k,w_i^k)$ has slope $1/\rho$, which is
``steeper'' than the graph of the operator, which can have slope at most $L_i$
by Lipschitz continuity.  This guarantees that the line segment between
$(z^k,w_i^k)$ and $(x_i^k,y_i^k)$ must have negative slope, which in $\real^1$
is equivalent to the claimed inner product property.

%PJ: moved the backtracking linesearch here rather than 1.2, since it seemed to ``mathy" for that section. 
Using a backtracking line search, we will also be able to handle the
situation in which the value of $L_i$ is unknown.  If we choose any positive
constant $\Delta > 0$, then by elementary algebra the inequalities $(1/\rho) - L_i \geq
\Delta$ and $\rho \leq 1/(L_i + \Delta)$ are equivalent.  Therefore, if we select some
positive $\rho \leq 1/(L_i + \Delta)$, we have from~\eqref{fstep3} that
\begin{eqnarray}\label{backtrack}
\inner{z^k - x_i^k}{y_i^k-w_i^k} \geq \Delta{\|z^k-x_i^k\|^2},
\end{eqnarray}
which implies the key properties we need for the convergence proofs.
Therefore we may start with any $\rho = \rho^0 > 0$, and repeatedly halve
$\rho$ until~\eqref{backtrack} holds;  
in Section~\ref{secLineSearch} below, we bound the number of halving steps required.
%% JE This seemed more detail than we need here and I'm not sure the formula
%% for the bounds is what we ended up with
% This will occur within at most
% $$
% \max\left\{1,\left\lceil\log_2\!\left(\rho^0(L_i+\Delta)\right)\right\rceil\right\}
% $$ 
% iterations. 
In general, each trial value of $\rho$ requires one application of the
 Lipschitz continuous operator $T_i$.  However, for the case of affine
 operators $T_i$, we will show that it is possible to compute a 
 stepsize such that \eqref{backtrack} holds
with a total of only two applications of the operator. 
% These two approaches are
%  another unique feature of our algorithmic framework.
%
 % However, when $T_i$ is also affine, the
 % two required applications of $T_i$ can be precomputed and the backtracking
 % linesearch only requires rescaling. In fact, for affine and Lipschitz
 % continuous operators, we can compute a valid stepsize adaptively, again using
 % only two forward evaluations per iteration, without the need for the
 % Lipschitz parameter. This makes the algorithm particularly effective for
 % optimization problems which involve quadratic objective terms. 
 % For such problems, the algorithm does not require
 % computing the largest eigenvalue of a potentially large matrix. 
%
 By contrast, most
 backtracking procedures in optimization algorithms require evaluating the
 objective function at each new candidate point, which in turn usually requires an
 additional matrix multiply operation in the quadratic case~\cite{beck2009fast}. 
 % Our algorithm avoids this and requires precisely two evaluations of the forward
 % operator per iteration.

%!TEX root = forwProjMP-R1.tex
\subsection{Summary of Contributions}

The main thrust of the remainder of this paper is to incorporate the second,
forward-step construction of $(x_i^k,y_i^k)$ above into an algorithm
resembling those of~\cite{combettes2016async,eckstein2017simplified}, 
allowing some operators to
use backward steps, and others to use forward steps.  Thus, projective
splitting may become useful in a broad range of applications in which computing 
forward steps is preferable to computing or approximating proximal
steps. The resulting algorithm inherits the block-iterative
features and \col{flexible}
capabilities of~\cite{combettes2016async,eckstein2017simplified}.

We will work with a slight restriction of problem~\eqref{ProbMono}, namely
\begin{eqnarray}\label{probCompMonoMany}
0\in \sum_{i=1}^{n-1} G_i^* T_i(G_i z) +T_n(z).
\end{eqnarray}
In terms of problem~\eqref{ProbMono}, we are simply requiring that $G_n$ be
the identity operator and thus that $\calH_n = \calH_0$. This is not much of a
restriction in practice, since one could redefine the last operator as 
$T_{n}\leftarrow G^*_{n} \circ T_{n} \circ G_{n}$, or one could
simply append a new operator $T_n$ with $T_n(z) = \{0\}$ everywhere. 

The
principle reason for adopting a formulation involving the linear operators
$G_i$ is that in many applications of~\eqref{probCompMonoMany} it may be
relatively easy to compute the proximal step of $T_i$ but difficult to compute
the proximal step of $G_i^*\circ T_i\circ G_i$. Our framework will include
algorithms for~\eqref{probCompMonoMany} that may compute the proximal steps on
$T_i$, forward steps when $T_i$ is Lipschitz continuous, and
applications (``matrix multiplies") of $G_i$ and $G_i^*$.
%We will assume that one of the Lipschitz-continuous operators (without loss of
%generality, the last one) is not composed with a linear operator; this is not
%much of a restriction in practice, because one may simply redefine the last
%Lipschitz operator to incorporate a linear map (that is, replace $B_{m}
%\leftarrow G^*_{n} \circ B_{m} \circ G_{n}$), or simply take $B_{m}$ to
%be the zero operator. 
An interesting feature of the forward steps in our method is that while the
allowable stepsizes depend on the Lipschitz constants of the $T_i$ for
$i\in\Iforw$, they do not depend on the linear operator norms $\|G_i\|$, in
contrast with primal-dual 
methods~\cite{chambolle2011first,condat2013primal,vu2013splitting}.
Furthermore, as already mentioned, 
the stepsizes used for each operator can be chosen
independently and may vary by iteration.

%As previously mentioned, we also develop a backtracking linesearch procedure
%for the case where the Lipschitz constants are unknown.

%If one of the first $n$ terms $A_i$ is not composed with a linear operator
%(in other words $G_i = I$ for $i\leq n$, then it is not necessary to
%introduce that $B_{m}$ is Lipschitz continous but this is not necessary. This
%operator, corresponding to the linear operator which is the identity, can be
%dealt with via a proximal term if this is more convenient. Alternatively if
%one of the first $n$ linear operators $G_i$ is the identity operator, then
%introducing $B_m$ is unnecessary. All that matters is that one of the
%operators is composed with the identity operator.

We \col{also present a previously unpublished ``greedy''} 
heuristic for selecting operators in block-iterative
splitting, based on a simple proxy. Augmenting this heuristic with a straightforward
safeguard allows one to retain all of the convergence properties of the main
algorithm.  The heuristic is not specifically tied to the use of forward steps
and also applies to the earlier algorithms
in~\cite{combettes2016async,eckstein2017simplified}. The numerical experiments
in Section~\ref{secNumerical} below attest to its usefulness.

\col{The main 
	contribution of this work is the new two-forward-step procedure. The main
	 proposed algorithm is a block-iterative splitting method that performs
	 well in our numerical experiments when combined with the greedy block
	 selection strategy. However, the analysis also allows for the kind of
	 asynchronous operation developed in
	 \cite{combettes2016async,eckstein2017simplified}. Empirically
	 investigating such asynchronous implementations is beyond the scope of
	 this work. Since allowing for asynchrony introduces little additional
	 complexity into the convergence analysis, we have included it in the
	 theoretical results. }

\col{After submitting this paper, we became aware of the
preprint~\cite{TV15}, which develops a similar two-forward-step
procedure for projective splitting in a somewhat different setting than
\eqref{probCompMonoMany}. The scheme is equivalent to ours when $G_i=I$, but
does not incorporate the backtracking linesearch or its simplification for
affine operators. Their analysis also does not allow for asynchronous or
block-iterative implementations. }

%Finally, we discuss the application of these algorithms to both the primal and
%dual instances of common optimization problems.  We discuss the advantages and
%disadvantages of the various resulting algorithms.

%!TEX root = forwProjMP-R1.tex
\section{Mathematical Preliminaries}
\subsection{Notation}
Summations of the form $\sum_{i=1}^{n-1}a_i$ for some collection $\{a_i\}$ 
will appear throughout this paper. 
To deal with the case $n=1$, we use the standard convention that
$
\sum_{i=1}^{0} a_i = 0.
$
To simplify the presentation, we use the following notation
throughout the rest of the paper, where $I$ denotes the identity map on
$\mathcal{H}_n$:
\begin{align}%\nonumber\label{defG1}
G_{n} &= I
&
\label{defwn}
(\forall\,k\in\mathbb{N}) \;\;\; w_{n}^k &\triangleq - \sum_{i=1}^{n-1} G_i^*w_i^k.
\end{align}
Note that when $n=1$, $w_1^k = 0$. 
We will use a boldface $\bw = (w_1,\ldots,w_{n-1})$ for elements 
of $\calH_1\times\ldots\times\calH_{n-1}$.

Throughout, we will simply write $\|\cdot\|_i = \|\cdot\|$ as the
norm for $\calH_i$ and let the subscript be inferred from the argument. In the
same way, we will write $\langle\cdot,\cdot\rangle_i$ as $\langle\cdot
,\cdot\rangle$ for the inner product of $\calH_i$. For the collective
primal-dual space defined in Section~\ref{secMainAss}, we will use a special norm
and inner product with its own subscript.

%% JE == I think this standard and we don't need to expend space on it
% \subsection{Monotone Operators}
% A monotone operator satisfies 
% \begin{eqnarray*}
% \langle u - v,u' - v'\rangle \geq 0,\forall u,v,u'\in A(u),v'\in A(v).
% \end{eqnarray*}
% For a monotone operator $A$, define the graph of $A$
% \begin{eqnarray*}
% 	\text{Graph}(A) = \{(x,y):y\in A(x)\}.
% \end{eqnarray*}
% A monotone operator is maximal if its graph is not a subset of the graph of any other monotone operator. 

%A strongly monotone operator satisfies
%\begin{eqnarray*}
%\langle u - v,u' - v'\rangle \geq \mu\|u - v\|^2_2,\forall u,v,u'\in A(u),v'\in A(v)
%\end{eqnarray*}
%for some $\mu>0$. 
% We say that an operator is cocoercive if 
% \begin{eqnarray*}
% \langle u - v,A(u) - A(v)\rangle \geq \gamma \|A(u) - A(v)\|^2_2,\forall u,v
% \end{eqnarray*}
%for some $\gamma>0$. 
%Note that a cocoercive operator is Lipschitz continuous. 
%We use the generalized notation for the prox operator introduced in \cite{eckstein2017simplified}. Specifically
%\begin{eqnarray*}
%(x,y) = \Prox_A^\mu(z)\implies
%x+\mu y = z,\text{ and }y\in A(x). 
%\end{eqnarray*}

For any maximal monotone operator $A$ we will use the notation
$
\prox_{\rho A} = (I+\rho A)^{-1},
$
for any scalar $\rho > 0$, to denote the \emph{proximal operator}, 
also known as the backward or implicit step with respect to $A$.  This means that
\begin{eqnarray}\label{defprox2}
x = \prox_{\rho A}(a) \quad\implies\quad \exists y\in Ax:x+\rho y = a.
\end{eqnarray}
The $x$ and $y$ satisfying this relation are unique. 
Furthermore, $\prox_{\rho A}$ is defined everywhere and
$\text{range}(\prox_A) = \text{dom}(A)$ \cite[Prop. 23.2]{bauschke2011convex}. 
%% JE == again, too standard to spend space on
% An $L$-Lipschitz continuous operator $B$ for some $L\geq 0$ satisfies
% $$
% \forall x,y\in \text{dom}(B): \|Bx-By\|\leq L\|x-y\|
% $$
% and is clearly single-valued. 

%!TEX root = forward-proj-part1.tex
%\subsection{More Definitions}

%The Kuhn-Tucker solution set of Prob. \ref{probMono} is
%\begin{eqnarray}
%	\calS\triangleq
%	\left\{
%	(z,{\bf w})\in \calC:
%	w_i\in A_i(z),i=1,\ldots,n,w_{n+j}\in B_j(z),j=1,\ldots,m
%	\right\}\label{defS}
%\end{eqnarray}
%where ${\bf w}=(w_1,\ldots,w_{n+m})$. 
%and the ambient space is
%\begin{eqnarray*}
%	\mathcal{C} = \{(z,w_1,\ldots,w_{n+m})\in\calH^{n+m+1}:w_1+w_2+\ldots w_{n+m}=0\}.
%\end{eqnarray*}
%It can be verified that $\calS$ is convex and closed under Assumption \ref{AssMonoProb} stated below (see for example Lemma 1 of \cite{eckstein2017simplified}). 
%We will also use the space
%$$
%\calW = \left\{(w_1,w_2,\ldots,w_{n+m}):\sum_{i=1}^{m+n}w_i=0\right\}.
%$$

We use the standard ``$\rightharpoonup$" notation to denote weak convergence, which is
of course equivalent to ordinary convergence in finite-dimensional
settings.

The following basic result will be used several times in our proofs:
\begin{lemma}\label{lemBasic}
For any vectors $v_1,\ldots,v_n$, 
$
\left\|\sum_{i=1}^n v_i\right\|^2\leq n\sum_{i=1}^n\left\|v_i\right\|^2.
$
\end{lemma}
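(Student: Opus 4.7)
The plan is to prove this via two applications of the Cauchy–Schwarz inequality, first inside the Hilbert space and then in $\mathbb{R}^n$. Specifically, by the triangle inequality (which is itself a consequence of Cauchy–Schwarz in the underlying Hilbert space), I have
\[
\left\|\sum_{i=1}^n v_i\right\| \;\leq\; \sum_{i=1}^n \|v_i\|.
\]
Squaring both sides reduces the claim to showing $\bigl(\sum_{i=1}^n \|v_i\|\bigr)^{2} \leq n \sum_{i=1}^n \|v_i\|^2$.

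The second step is then immediate from the finite-dimensional Cauchy–Schwarz inequality applied to the vectors $(1,1,\ldots,1)\in\mathbb{R}^n$ and $(\|v_1\|,\ldots,\|v_n\|)\in\mathbb{R}^n$:
\[
\left(\sum_{i=1}^n 1\cdot\|v_i\|\right)^{2} \;\leq\; \left(\sum_{i=1}^n 1^2\right)\left(\sum_{i=1}^n \|v_i\|^2\right) \;=\; n\sum_{i=1}^n \|v_i\|^2.
\]
Chaining the two inequalities yields the claim. There is really no obstacle here; the only ``choice'' is between this triangle-inequality route and an alternative route that expands $\|\sum_i v_i\|^2 = \sum_{i,j}\langle v_i, v_j\rangle$ and bounds each cross term via $\langle v_i, v_j\rangle \leq \tfrac12(\|v_i\|^2 + \|v_j\|^2)$, which also produces the factor $n$ after summation. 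Both proofs are a few lines; the triangle-inequality version is slightly shorter and is what I would write.
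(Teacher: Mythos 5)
Your proof is correct, but it takes a different route from the paper's. You go through the triangle inequality $\bigl\|\sum_i v_i\bigr\| \leq \sum_i \|v_i\|$ and then apply Cauchy--Schwarz in $\mathbb{R}^n$ to the vectors $(1,\ldots,1)$ and $(\|v_1\|,\ldots,\|v_n\|)$. The paper instead writes $\bigl\|\sum_{i=1}^n v_i\bigr\|^2 = n^2\bigl\|\tfrac{1}{n}\sum_{i=1}^n v_i\bigr\|^2$ and applies Jensen's inequality for the convex function $\|\cdot\|^2$ to the average $\tfrac{1}{n}\sum_i v_i$, which yields the bound in a single step. The two arguments are essentially equally short; the paper's convexity argument avoids invoking Cauchy--Schwarz twice and generalizes immediately to any convex function in place of $\|\cdot\|^2$, while yours makes the mechanism (norms of sums versus sums of norms, then an $\ell_1$-to-$\ell_2$ comparison) more explicit. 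Your alternative sketch via expanding $\sum_{i,j}\langle v_i,v_j\rangle$ and bounding cross terms by $\tfrac12(\|v_i\|^2+\|v_j\|^2)$ is also valid. Any of these is acceptable; there is no gap.
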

\begin{proof}
$
\left\|\sum_{i=1}^n v_i\right\|^2
=
n^2\left\|\frac{1}{n}\sum_{i=1}^n v_i\right\|^2
\leq
n^2\cdot \frac{1}{n}\sum_{i=1}^n\|v_i\|^2,
$
where the inequality follows from the convexity of the function $\|\cdot\|^2$. 
%\qed 
\end{proof}

%!TEX root = forwProjMP-R1.tex
\subsection{Main Assumptions Regarding Problem (\ref{probCompMonoMany})}\label{secMainAss}
%In closing this section, we state our main assumption regarding~(\ref{probCompMonoMany}). 
Let $\boldsymbol{\mathcal{H}} = \mathcal{H}_0\times
\mathcal{H}_1\times\cdots\times\mathcal{H}_{n-1}$ and $\calH_n = \calH_0$.
Define the \emph{extended solution set} or \emph{Kuhn-Tucker set}
of~\eqref{probCompMonoMany} to be
% $\calS \subseteq\mathcal{C}$ s.t.
\begin{align} 
\nonumber 
\calS
=  \Big\{ (z,w_1,\ldots,w_{n-1}) \in \boldsymbol{\mathcal{H}} \;\; \Big| \;\;
w_i\in T_i(G_i z),\,\, i=1,\ldots,n-1, \;\;
\\
-\sum_{i=1}^{n-1} G_i^* w_i  \in T_n(z) \Big\}.
\label{defCompExtSol}
\end{align}
% \begin{eqnarray}
% \calS  = 
% \{
% (w_1,\ldots,w_{n-1},z)
% \,\,\text{ s.t. }
% w_i&\in& A_i G_i z,\quad i=1,\ldots,n,
% \nonumber\\\nonumber
% w_{n+j} &=& B_j G_{n+j} z,\quad j=1,\ldots,m-1,
% \\
% 0&=& 
% \sum_{i=1}^{n-1} G_i^* w_i 
% +B_{n}z
% \}.
%\end{eqnarray}
Clearly $z\in\mathcal{H}_{0}$ solves~\eqref{probCompMonoMany} if and only if
there exists $\bw\in\calH_1 \times \cdots \times \calH_{n-1}$ such that
$(z,\bw)\in\calS$. Our main assumptions regarding~\eqref{probCompMonoMany} are
as follows:
\begin{assumption}
	\label{AssMonoProb}\label{assMono}	
	Problem~(\ref{probCompMonoMany}) conforms to the following:
	\begin{enumerate}
	\item $\mathcal{H}_0 = \mathcal{H}_n$ and
	$\mathcal{H}_1,\ldots,\mathcal{H}_{n-1}$ are real Hilbert spaces.
	\item For $i=1,\ldots,n$, the operators
	$T_i:\mathcal{H}_{i}\to2^{\mathcal{H}_{i}}$ are monotone. 
	\item For all $i$ in some subset $\Iforw \subseteq \{1,\ldots,n\}$,
    the operator $T_i$ is
	$L_i$-Lipschitz continuous (and thus single-valued)
	and $\text{dom}(T_i) = \mathcal{H}_i$.  
	\item For $i \in
	\Iback \triangleq \{1,\ldots,n\} \backslash \Iforw$, the operator 
	$T_i$ is maximal and that the map $\prox_{\rho
	T_i}:\mathcal{H}_i\to\mathcal{H}_i$ can be computed 
 	to within the error tolerance specified below in Assumption \ref{assErr} 
(however, these operators are not precluded 
	from also being Lipschitz continuous).
	\item Each $G_i:\calH_{0}\to\calH_i$ for $i=1,\ldots,n-1$ is
	linear and bounded. 
	\item The solution set $\calS$ defined in
	(\ref{defCompExtSol}) is nonempty.
	\end{enumerate}
\end{assumption}
% Lipschitz continuous operators may still be handled via their proximal
% mappings if that is convenient. Thus the set $\Iback$ may include operators
% that are Lipschitz continuous. A specific example of such a choice is
% discussed in the numerical experiments of Section~\ref{secNumerical}.

\begin{lemma}
\label{lemClosed}
Suppose Assumption \ref{AssMonoProb} holds. The set $\calS$ defined in \eqref{defCompExtSol} is closed and convex. 
\end{lemma}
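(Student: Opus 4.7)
I would split the argument into closedness and convexity, handling them separately.

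\textbf{Closedness.} For each $i\in\Iforw$, monotonicity combined with Lipschitz continuity and $\mathrm{dom}(T_i)=\calH_i$ makes $T_i$ maximal monotone; combined with Assumption \ref{AssMonoProb}(4), every $T_i$ is therefore maximal monotone and its graph $\mathrm{gph}(T_i)\subseteq\calH_i\times\calH_i$ is closed. Since $(z,\bw)\in\calS$ is equivalent to requiring $(G_i z,w_i)\in\mathrm{gph}(T_i)$ for $i=1,\dots,n-1$ together with $(z,-\sum_{i=1}^{n-1}G_i^* w_i)\in\mathrm{gph}(T_n)$, and each of these conditions is the preimage of a closed set under a continuous (in fact bounded linear) map, $\calS$ is closed as a finite intersection of closed sets.

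\textbf{Convexity.} Take $(\bar z,\bar\bw),(\tilde z,\tilde\bw)\in\calS$ and $\lambda\in[0,1]$, and set $\bar w_n=-\sum_{i=1}^{n-1}G_i^*\bar w_i$, $\tilde w_n=-\sum_{i=1}^{n-1}G_i^*\tilde w_i$, using $G_n=I$, so that $\bar w_i\in T_i(G_i\bar z)$ and $\tilde w_i\in T_i(G_i\tilde z)$ for $i=1,\dots,n$. Monotonicity of each $T_i$ yields $\langle G_i\bar z-G_i\tilde z,\bar w_i-\tilde w_i\rangle\ge 0$. Summing over $i=1,\dots,n$ and moving $G_i$ to the other side gives $\langle\bar z-\tilde z,\sum_{i=1}^n G_i^*(\bar w_i-\tilde w_i)\rangle$, which equals $0$ because $\sum_{i=1}^n G_i^*\bar w_i=\sum_{i=1}^n G_i^*\tilde w_i=0$ by the definition of $w_n$. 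A sum of nonnegative terms that equals zero forces each term to vanish, so $\langle G_i\bar z-G_i\tilde z,\bar w_i-\tilde w_i\rangle=0$ for every $i$.

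\textbf{Key lemma on monotone graphs.} The crux is then the following fact, which I would establish directly from maximality: if $T$ is maximal monotone and $(a_1,b_1),(a_2,b_2)\in\mathrm{gph}(T)$ satisfy $\langle a_1-a_2,b_1-b_2\rangle=0$, then the entire segment lies in $\mathrm{gph}(T)$. The argument is that for arbitrary $(a,b)\in\mathrm{gph}(T)$ and $a_\lambda=\lambda a_1+(1-\lambda)a_2$, $b_\lambda=\lambda b_1+(1-\lambda)b_2$, expanding
\[
\langle a-a_\lambda,b-b_\lambda\rangle
= \lambda^2\langle a-a_1,b-b_1\rangle + (1-\lambda)^2\langle a-a_2,b-b_2\rangle + \lambda(1-\lambda)\bigl[\langle a-a_1,b-b_2\rangle+\langle a-a_2,b-b_1\rangle\bigr],
\]
rewriting the bracketed cross-term as $\langle a-a_1,b-b_1\rangle+\langle a-a_2,b-b_2\rangle-\langle a_1-a_2,b_1-b_2\rangle$, and using the hypothesis $\langle a_1-a_2,b_1-b_2\rangle=0$ together with monotonicity of $T$, shows $\langle a-a_\lambda,b-b_\lambda\rangle\ge 0$. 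Maximality of $T$ then gives $b_\lambda\in T(a_\lambda)$.

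\textbf{Conclusion.} Applying this lemma to each $T_i$ with $(G_i\bar z,\bar w_i)$ and $(G_i\tilde z,\tilde w_i)$ produces $\lambda\bar w_i+(1-\lambda)\tilde w_i\in T_i\bigl(G_i(\lambda\bar z+(1-\lambda)\tilde z)\bigr)$ for every $i=1,\dots,n$. For $i=n$ this gives precisely $-\sum_{i=1}^{n-1}G_i^*(\lambda\bar w_i+(1-\lambda)\tilde w_i)\in T_n(\lambda\bar z+(1-\lambda)\tilde z)$, and combined with the analogous statements for $i=1,\dots,n-1$ we obtain $\lambda(\bar z,\bar\bw)+(1-\lambda)(\tilde z,\tilde\bw)\in\calS$. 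The main obstacle is the graph lemma above; everything else is bookkeeping. This fact is standard for maximal monotone operators, so alternatively one could simply cite it from Bauschke--Combettes and skip the direct derivation.
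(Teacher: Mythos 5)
Your proposal is correct, but it takes a genuinely different route from the paper. The paper's proof is essentially a two-line citation: after observing (as you do) that the Lipschitz continuous operators with full domain are maximal monotone by \cite[Proposition~20.27]{bauschke2011convex}, it invokes \cite[Proposition~2.8(i)]{briceno2011monotone+}, which identifies $\calS$ with the zero set of a suitably constructed maximal monotone operator on the product space $\calH_0\times\calH_1\times\cdots\times\calH_{n-1}$ (with $A=T_n$, $B$ the product of $T_1,\ldots,T_{n-1}$, and $L:z\mapsto(G_1z,\ldots,G_{n-1}z)$), and closedness and convexity follow because zero sets of maximal monotone operators have these properties. Your argument instead proves everything from first principles: closedness via strong closedness of the graphs of the maximal monotone $T_i$ and continuity of the bounded linear maps defining the membership conditions, and convexity via the observation that summing the monotonicity inequalities over $i=1,\ldots,n$ telescopes to zero (because $\sum_{i=1}^{n}G_i^*w_i=0$ for points of $\calS$ with the convention $G_n=I$, $w_n=-\sum_{i=1}^{n-1}G_i^*w_i$), forcing each term to vanish, followed by the graph-segment lemma for maximal monotone operators whose verification you carry out correctly (the coefficient bookkeeping $\lambda^2+\lambda(1-\lambda)=\lambda$ checks out, giving $\langle a-a_\lambda,b-b_\lambda\rangle=\lambda P_1+(1-\lambda)P_2\ge 0$ and hence membership by maximality). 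What the paper's approach buys is brevity and reuse of established machinery; what yours buys is a self-contained, elementary proof that makes the convexity mechanism explicit --- in effect you have unpacked, for this special case, the reason the cited proposition is true. Both are valid; as you note, the graph-segment fact could itself be cited from Bauschke--Combettes rather than rederived.
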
 
\begin{proof}
We first remark that for $i\in\Iforw$ the operators $T_i$ are maximal by
\cite[Proposition~20.27]{bauschke2011convex}, so $T_1,\ldots,T_n$ are all maximal
monotone. The claimed result is then a special case of~\cite[Proposition
2.8(i)]{briceno2011monotone+} with the following change of \col{notation, where ``MM'' stands for ``maximal monotone" and ``BL'' stands for ``bounded linear''}:
	\begin{eqnarray*}
	\text{\textbf{Notation here}} & & 
	\text{\textbf{Notation in \cite{briceno2011monotone+}}} \\
	T_n 
	&\longrightarrow &
	A\text{ (MM operator)}
	\\
	(x_1,\ldots ,x_{n-1}) \mapsto T_1 x_1 \times \cdots \times T_{n-1} x_{n-1}
	&\longrightarrow &
	B\text{ (MM operator)}
	\\
	z \mapsto (G_1 z,\ldots,G_{n-1} z)
	&\longrightarrow&
	L\text{ (BL operator)}.
\end{eqnarray*}

~

\vspace{-6.5ex}
\ourqed
\end{proof}
%!TEX root = forwProjMP-R1.tex
\subsection{A Generic Linear Separator-Projection Method}

Suppose that $\boldsymbol{\mathcal{H}}$ is a real Hilbert space with inner
product $\langle\cdot,\cdot\rangle_{\bcalH}$ and norm $\|\cdot\|_{\bcalH}$. A
generic  linear separator-projection method for finding a point in some closed
and convex set $\calS\subseteq\boldsymbol{\mathcal{H}}$ is given in Algorithm
\ref{AlgGenericProject}.

\begin{algorithm}[h!]\label{AlgGenericProject}
	\DontPrintSemicolon
	\SetKwInOut{Input}{Input}
\Input{$p^1$, $0<\underline{\beta}\leq\overline{\beta}<2$}
	\caption{Generic linear separator-projection method for finding a point in a closed and convex set $\calS\subseteq\boldsymbol{\mathcal{H}}$.}
	\label{AlgAbstractProject}
	\For{$k=1,2,\ldots,$}{
	Find an affine function $\varphi_k$ such that $\nabla\varphi_k\neq 0$ and
	    $\varphi_k(p)\leq 0$ for all $p\in \calS$. \;
	Choose $\beta_k\in[\underline{\beta},\overline{\beta}]$\label{eqProjectUpdatem1}\;
	$p^{k+1} = p^k - 
	\frac{\beta_k\max\{0,\varphi_k(p^k)\}}
	{\|\nabla\varphi_k\|_{\bcalH}^2}
	\nabla\varphi_k$
	\label{eqProjectUpdate}\;
 }
\end{algorithm}
%$\hat{p}_k = \text{projection of }p^k\text{ onto }\{p:\varphi_k(p)\leq 0\}$
%using the norm: $\|\cdot\|_{\bcalH}$\; $p^{k+1}=  (1-\beta_k)p^k + \beta_k
%\hat{p}_k$

The update on line~\ref{eqProjectUpdate} is the $\beta_k$-relaxed projection of $p^k$
onto the halfspace $\{p:\varphi_k(p) \leq 0\}$ using the norm
$\|\cdot\|_{\bcalH}$. In other words, if $\hat{p}^k$ is the projection onto
this halfspace, then the update is $p^{k+1} = (1-\beta_k)p^k +
\beta_k\hat{p}^k$. Note that we define the gradient $\nabla\varphi_k$ with
respect to the inner product $\langle\cdot,\cdot\rangle_{\bcalH}$, meaning we
can write
$$
(\forall p,\tilde{p}\in\bcalH):\quad\varphi_k(p) = \langle\nabla\varphi_k,p - \tilde{p}\rangle_{\bcalH} + \varphi_k(\tilde{p}).
$$

We will use the following well-known properties
of algorithms fitting the template of Algorithm~\ref{AlgGenericProject}; see for example~\cite{combettes2000fejerXX,eckstein2008family}:
\begin{lemma} 
Suppose $\calS$ is closed and convex. Then for Algorithm \ref{AlgAbstractProject},
	\begin{enumerate}
		\item The sequence $\{p^k\}$ is bounded.
		\item $\|p^k - p^{k+1}\|_{\bcalH}\to 0$\label{pointDiff20};
		\item \label{pointFejer}If all weak limit points of $\{p^k\}$ are in $\calS$, then
		$p^k$ converges weakly to some point in $\calS$.
	\end{enumerate}\label{propFejer}\label{lemFejer}
\end{lemma}
Note that we have not specified how to choose the affine function $\varphi_k$.
For our specific application of the separator-projection
framework, we will do so in Section~\ref{secHplane}.

\subsection{\col{Our Hyperplane}}\label{secTecLems}\label{secHplane}
In this section, we define the affine function our algorithm uses to construct
a separating hyperplane. 
Let $p =(z,\bw) =  (z,w_1,\ldots, w_{n-1})$ be a
generic point in $\bcH$, the collective primal-dual space. For $\bcH$, we
adopt the following norm and inner product for some $\gamma>0$:
\begin{align} \label{gammanorm}
\norm{(z,\bw)}_\gamma^2 &= \gamma\|z\|^2 + \sum_{i=1}^{n-1}\|w_i\|^2 
\\\nonumber 
\Inner{(z^1,\bw^1)}{(z^2,\bw^2)}_\gamma &= 
\gamma\langle z^1,z^2\rangle + \sum_{i=1}^{n-1}\langle
w^1_i,w^2_i\rangle.
\end{align}
Define the following function
generalizing~\eqref{simplesep} at each iteration $k\geq 1$:
\begin{eqnarray}\label{hyper}\label{hplane}\label{defHyper}
\varphi_k(p)
&=&
\sum_{i=1}^{n-1}
\left
\langle G_i z - x_i^k,y_i^k - w_i
\right
\rangle
+
\left\langle
z-x_{n}^k,
y_{n}^k
+
\sum_{i=1}^{n-1} G_i^* w_i
\right\rangle,
\label{defCompHplane}
\end{eqnarray}
where the $(x_i^k,y_i^k)$ are chosen so that $y_i^k\in T_i x_i^k$ for
$i=1,\ldots,n$ (recall that each inner product is for the corresponding
Hilbert space $\calH_i$).
%but for simplicity we have dropped the subscript.  
This function is a special case of the separator function used
in~\cite{combettes2016async}.  
%Note that if $n=1$, $\varphi_k(p) = \varphi_k(z) = \langle z-x_1^k,y_1^k\rangle$.
The following lemma proves some basic
properties of $\varphi_k$; similar results are
in~\cite{alotaibi2014solving,combettes2016async,eckstein2017simplified} in the
case $\gamma=1$.

\begin{lemma}\label{LemGradAffine} Let $\varphi_k$ be defined as in (\ref{hplane}).  Then:
	\begin{enumerate}
		\item $\varphi_k$ is affine on $\bcH$. 
		\item \label{item:gradForm} With respect to inner product
		$\langle\cdot,\cdot\rangle_\gamma$ on $\bcH$, the gradient
		of $\varphi_k$ is
		$$
		\nabla\varphi_k =
		\left(\frac{1}{\gamma}\left(\sum_{i=1}^{n-1} G_i^* y_i^k+ y_n^k\right) \!\!,\;
		x_1^k - G_1 x_{n}^k,\ldots,x_{n-1}^k - G_{n-1} x_{n}^k\right).
		$$
		\item 
		Suppose Assumption \ref{AssMonoProb} holds and 
		that $y_i^k\in T_i x_i^k$ for $i=1,\ldots,n$. Then
		$\varphi_k(p)\leq 0$ for all $p\in \calS$ defined in (\ref{defCompExtSol}).
		\item \label{lem0grad}
		If Assumption~\ref{AssMonoProb} holds, $y_i^k\in T_i x_i^k$ for $i=1,\ldots,n$, and $\nabla\varphi_k =  0$, then 
		$
		(x_n^k,y_1^k,\ldots,y_{n-1}^k)\in\calS.
		$
	\end{enumerate}
\end{lemma}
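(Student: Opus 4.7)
The plan is that all four parts reduce to direct computation from the definition of $\varphi_k$, combined with monotonicity of the $T_i$'s; none of the steps requires new machinery.

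First, I would prove (1) and (2) simultaneously by expanding the right-hand side of~(\ref{defCompHplane}) as a function of $(z,w_1,\ldots,w_{n-1})$. The crucial observation is that the cross-terms involving $z$ and the $w_i$'s cancel via the adjoint identity $\langle G_i z, w_i\rangle = \langle z, G_i^* w_i\rangle$: in the first sum such a term appears with a minus sign, while in the second it appears with a plus sign. After this cancellation one is left with
$$
\varphi_k(p) = \Big\langle z,\ \textstyle\sum_{i=1}^{n-1} G_i^* y_i^k + y_n^k\Big\rangle + \sum_{i=1}^{n-1} \langle w_i,\ x_i^k - G_i x_n^k\rangle - \sum_{i=1}^{n} \langle x_i^k, y_i^k\rangle,
$$
which is manifestly affine, giving (1). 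For (2) I match this expression against the definition of the gradient under $\langle\cdot,\cdot\rangle_\gamma$; the factor $1/\gamma$ in the primal component of $\nabla\varphi_k$ is forced by the weighting $\gamma$ on the $z$-inner product in~(\ref{gammanorm}), while the dual components come out directly.

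For (3) I apply monotonicity term by term. For $p=(z,w_1,\ldots,w_{n-1}) \in \calS$, we have $w_i \in T_i(G_i z)$ for $i<n$ paired with $y_i^k \in T_i x_i^k$, so monotonicity gives $\langle G_i z - x_i^k, y_i^k - w_i\rangle \le 0$. Defining $w_n := -\sum_{i=1}^{n-1} G_i^* w_i$, the Kuhn--Tucker membership says $w_n \in T_n z$, and pairing this with $y_n^k \in T_n x_n^k$ yields the analogous inequality for the $n$th term. Summing produces $\varphi_k(p) \le 0$.

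For (4), I use the gradient formula from (2): $\nabla\varphi_k=0$ is equivalent to $x_i^k = G_i x_n^k$ for $i=1,\ldots,n-1$ together with $\sum_{i=1}^{n-1} G_i^* y_i^k + y_n^k = 0$. Setting $z = x_n^k$ and $w_i = y_i^k$, the first relation gives $y_i^k \in T_i x_i^k = T_i(G_i z)$, and the second rearranges to $-\sum_{i=1}^{n-1} G_i^* w_i = y_n^k \in T_n(x_n^k) = T_n(z)$, matching the definition of $\calS$ in~(\ref{defCompExtSol}).

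The only mildly delicate step is the bookkeeping in the expansion for (1)--(2): one must carefully track the sign of every cross-term so that the $\langle G_i z, w_i\rangle$ and $\langle z, G_i^* w_i\rangle$ contributions indeed cancel, and one must not forget the factor $1/\gamma$ forced by the non-standard inner product~(\ref{gammanorm}). Everything else is a routine application of monotonicity and the adjoint identity.
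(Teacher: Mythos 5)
Your proposal is correct and follows essentially the same route as the paper: the same expansion of $\varphi_k$ into the affine form with the cross-terms in $z$ and $w_i$ cancelling via the adjoint identity, the same identification of the gradient under the $\gamma$-weighted inner product, the same term-by-term monotonicity argument for nonpositivity on $\calS$, and the same reading-off of membership in $\calS$ from $\nabla\varphi_k=0$. No gaps.
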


%!TEX root = forwProjMP-R1.tex
%Proof of Lemma LemGradAffine
\begin{proof}
To see that $\varphi_k$ is affine, rewrite~\eqref{defHyper} as
\begin{align*}
\varphi_k(z,\bw)
&=
\sum_{i=1}^{n-1}
\langle
G_i z,
y_i^k - w_i
\rangle 
-
\sum_{i=1}^{n-1}
\langle
x_i^k,
y_i^k - w_i
\rangle 
+
\left\langle
z,
y_n^k + \sum_{i=1}^{n-1} G_i^* w_i
\right\rangle 
%\nonumber
\\
&
\qquad 
-\left\langle
x_n^k,
y_n^k + \sum_{i=1}^{n-1} G_i^* w_i
\right\rangle 
\end{align*}
\pagebreak[3]   %Change page break location to a nicer place
\begin{align}
\qquad\quad\quad &=
\sum_{i=1}^{n-1}
\langle
z,
G_i^*(y_i^k - w_i)
\rangle 
+
\sum_{i=1}^{n-1}\langle 
w_i
,
x_i^k
\rangle
-
\sum_{i=1}^{n}
\langle
x_i^k,
y_i^k
\rangle 
\nonumber\\
&
\qquad
+
\left\langle
z,
y_n^k + \sum_{i=1}^{n-1} G_i^* w_i
\right\rangle 
-
\sum_{i=1}^{n-1}
\left\langle
 w_i
 ,
  G_i x_n^k
\right\rangle
\nonumber\\
\label{eqAffineExpress}
&=
\left\langle
z
,
\sum_{i=1}^{n-1}
G_i^*
y_i^k
+
y_{n}^k
\right\rangle 
+
\sum_{i=1}^{n-1}
\langle
w_i,
x_i^k - G_i x_{n}^k
\rangle
-
\sum_{i=1}^{n}\langle x_i^k,y_i^k\rangle.
\end{align}	
It is now clear that $\varphi_k$ is an affine function of $p = (z,\bw)$. 
Next, fix an arbitrary $\tilde{p}\in\bcalH$.  Using that $\varphi_k$ is affine, we
may write
\begin{eqnarray*}
\varphi_k(p) 
&=& \inner{p - \tilde{p}}{\nabla \varphi_k}_\gamma + \varphi_k(\tilde{p}) =
\langle p,\nabla\varphi_k\rangle_{\gamma}
+\varphi_k(\tilde{p})-\langle\tilde{p},\nabla\varphi_k\rangle_\gamma
\\
&=&
\gamma \langle z,\nabla_z\varphi_k\rangle 
+
\sum_{i=1}^{n-1}
\langle w_i,\nabla_{w_i}\varphi_k\rangle 
+\varphi_k(\tilde{p})-\langle\tilde{p},\nabla\varphi_k\rangle_\gamma
\end{eqnarray*}
Equating terms between this expression and~\eqref{eqAffineExpress} yields the
claimed expression for the gradient.

Next, suppose Assumption~\ref{AssMonoProb} holds and $y_i^k\in T_i x_i^k$
for $i=1,\ldots,n$.  To prove the third claim, we need to consider
$(z,\bw)\in\calS$ and establish that $\varphi_i(z,\bw) \leq 0$.  We do so by
showing that all $n$ terms in~\eqref{defHyper} are nonpositive: first, for
each $i=1,\ldots,n-1$, we have $\inner{G_i z - x_i^k}{y_i^k - w_i} \leq 0$
since $T_i$ is monotone, $w_i\in T_i(G_i z)$, and $y_i^k\in T_i x_i^k$.  The
nonpositivity of the final term is established similarly by noting that
$y_{n}^k \in T_{n} x_{n}^k$, $-\sum_{i=1}^{n-1} G_i^* w_i \in T_n z$, and that
$T_{n}$ is monotone.

Finally, suppose $\nabla\varphi_k = 0$ for some $k\geq 1$. Then
$y_n^k = -\sum_{i=1}^{n-1}G_i^* y_i^k$ and 
$x_i^k - G_i x_n^k = 0$ for all $i=1,\ldots,n-1$. 
The latter implies that  
$y_i^k\in T_i(G_i x_n^k)$ for all $i=1,\ldots,n-1$.
Since we also have $y_n^k\in T_n(x_n^k)$, we obtain
that $(x_n^k,y_1^k,\ldots,y_{n-1}^k)\in\calS$. 
\qed
\end{proof}

%!TEX root = forwProjMP-R1.tex
\section{Our Algorithm}
\subsection{Algorithm Definition}

\begin{algorithm}[h!]
	\DontPrintSemicolon
\SetKwInOut{Input}{Input}
\SetKwInOut{Parameters}{Parameters}
\Input{$(z^1,{\bf w}^1)\in \boldsymbol{\mathcal{H}}$, $(x_i^0,y_i^0)\in\mathcal{H}_i^2$ for $i=1,\ldots,n$.}
\Parameters{$\{I_k\}_{k\in\nN}$ where $I_k\subseteq\{1,\ldots,n\}$, $\{d(i,k)\}_{k\in\nN}$ for $i=1,\ldots,n$ where $1\leq d(i,k)\leq k$, $0<\underline{\beta}\leq\overline{\beta}<2$, $\gamma>0$.}

\For{$k=1,2,\ldots$}
{   
	%\If{$k\geq \hat{k}$}
	%{

		\For{$i\in I_k$}
		{
			\tcc{\col{these are the active operators to be processed}}
			\label{lineStartFor}
			\If{$i\in\Iback$}
			{
			    	$a = G_i z^{d(i,k)}+\rho_{i}^{d(i,k)} w_i^{d(i,k)}+e_i^k$\label{lineaupdate}
			    					\tcc*{\col{do a backward step}}
			    $x_i^k = \prox_{\rho_{i}^{d(i,k)} T_i}(a)
			    $\label{LinebackwardUpdate}\;
			    $
			    y_i^k = (\rho_{i}^{d(i,k)})^{-1}
			    \left(
			    a - x_i^k
			    \right)
			    $\label{lineBackwardUpdateY}\;

		    }
		    \Else
			{
				\tcc{\col{do two forward steps}}            	            	
			    \label{LineForwardUpdate}
				$					
				x_i^k = G_i z^{d(i,k)}-\rho_{i}^{d(i,k)}
				( T_i G_i z^{d(i,k)} - w_{i}^{d(i,k)}),
				$\label{ForwardxUpdate} \;
				$
				y_i^k = T_i x_i^k.
				$
				\label{ForwardyUpdate}\;

		    }
        }
	    \For{$i\notin I_k$} 
		{
			\tcc{\col{These are the inactive operators}}
			$(x_i^k,y_i^k)=(x_i^{k-1},y_i^{k-1})$\label{lineLeave}
		}
	
	\tcc{\col{Beginning of projection procedure}} 
    $u_i^k = x_i^k - G_i x_n^k,\quad i=1,\ldots,n-1,$\label{lineCoordStart}\;
	$v^k = \sum_{i=1}^{n-1} G_i^* y_i^k+y_n^k$\label{lineVupdate}\;    
	$\pi_k = \|u^k\|^2+\gamma^{-1}\|v^k\|^2$  \label{linePiUpdate}\;    
	\eIf{$\pi_k>0$}{
		Choose some $\beta_k \in [\underline{\beta},\overline{\beta}]$\;  \label{lineHplane} 
		$\varphi_k(p^k) = 
		\langle z^k, v^k\rangle 
		+
		\sum_{i=1}^{n-1}
		\langle w_i^k,u_{i}^k\rangle 
		-
		\sum_{i=1}^{n}
		\langle x_i^k,y_i^k\rangle  
		$\label{lineComputeHplane}\;
		$\alpha_k = \frac{\beta_k}{\pi_k}\max\left\{0,\varphi_k(p^k)\right\}
		$\label{lineAlphaCompute}\;
	}
	{
%	    $\alpha_k = 0$\;
	    %\If{$y_i^k\in T_i x_i^k$ for $i=1,\ldots,n$}
	    \If{$\cup_{j=1}^k I_j=\{1,\ldots,n\}$ \label{allprocessed}}
	    {
		    \Return $z^{k+1}\leftarrow x_n^k, w_1^{k+1}\leftarrow y_1^k,\ldots,w_{n-1}^{k+1}\leftarrow y_{n-1}^k$\label{lineReturn}\;
	    }
        \Else
        {
           $\alpha_k = 0$\;
        }
	} 
$z^{k+1} = z^k - \gamma^{-1}\alpha_k v^k$\label{eqAlgproj1} \;
$w_i^{k+1} = w_i^k - \alpha_k u_{i}^k,\quad i=1,\ldots,n-1$,\label{eqAlgproj2} \;
$w_{n}^{k+1} = -\sum_{i=1}^{n-1} G_i^* w_{i}^{k+1}$\label{lineCoordEnd}\;
}
\caption{General Projective Splitting Algorithm for solving~\eqref{probCompMonoMany}.}
\label{AlgfullyAsync}
\end{algorithm}

%!TEX root = forwProjMP-R1.tex
\label{secAlgOverview}
Algorithm~\ref{AlgfullyAsync} is our \col{flexible} block-iterative projective splitting
algorithm with forward steps for solving~\eqref{probCompMonoMany}. 
It is essentially a special case of
the weakly convergent Algorithm of~\cite{combettes2016async}, except that we use
the new forward-step procedure to deal with the Lipschitz continuous operators
$T_i$ for $i\in\Iforw$, instead of exclusively using proximal steps. For our
separating hyperplane in \eqref{defHyper}, we use a special case of the formulation of \cite{combettes2016async},
which is slightly different from the one used in~\cite{eckstein2017simplified}.
%in order to avoid a difficult projection at each iteration.
Our method can be reformulated to use the same hyperplane as
\cite{eckstein2017simplified}; however, this requires that it be computationally
feasible to project on the subspace given by the equation $\sum_{i=1}^{n} G_i^*w_i
 = 0$.

\col{ 
Under appropriate conditions, Algorithm \ref{AlgfullyAsync} is an instance of Algorithm \ref{AlgAbstractProject} (see Lemma \ref{lemIsProject}). Lines \ref{lineCoordStart}--\ref{lineCoordEnd} of Algorithm \ref{AlgfullyAsync} essentially implement the projection step on line \ref{eqProjectUpdate} of Algorithm \ref{AlgAbstractProject}. Lines \ref{lineStartFor}--\ref{lineLeave}  construct the points $(x_i^k,y_i^k)$ used to define the affine function $\varphi_k$ in \eqref{defHyper}, which defines the separating hyperplane. 
}

% which may be a valid alternative when all the linear operators are the
% identity or if, for whatever reason, the projection used in
% \cite{eckstein2017simplified} is computationally simple.

The algorithm has the following parameters:
\begin{itemize}
	%\item An integer $\hat{k}\geq 1$. For iterations before %$\hat{k}$ we allow
	%the algorithm to arbitrarily set the points $(x_i^k,y_i^k)$.  This allows
	%us to avoid a potentially costly intialization routine. 
	\item For each iteration $k\geq 1$, a subset $I_k\subseteq
	\{1,\ldots,n\}$. \col{These are the indices of the  ``active'' operators
	that iteration $k$ processes by either a backward step or two forward
	steps.  The remaining, ``inactive'' operators simply have
	$(x_i^k,y_i^k)=(x_i^{k-1},y_i^{k-1})$.}
	\item For each iteration $k\geq 1$ and $i=1,\ldots,n$, a delayed iteration
	index $d(i,k)\in\{1,\ldots,k\}$ which allows the subproblem calculations
	\col{on lines \ref{lineaupdate}--\ref{ForwardyUpdate} to use outdated
	information $(z^{d(i,k)}, w_i^{d(i,k)})$. In the most straightforward case
	of no delays, $d(i,k)$ is simply $k$.} %If $k<\hat{k}$ then set $d(i,k)=-1$. 
	\item For each $k\geq 1$ and $i=1,\ldots,n$, a positive scalar 
	stepsize $\rho_{i}^{k}$.
	\item For each iteration $k\geq 1$, an overrelaxation parameter
	$\beta_k\in [\underline{\beta},\overline{\beta}]$ for some constants
	$0<\underline{\beta}\leq\overline{\beta}<2$.
	\item A scalar $\gamma>0$ which controls the relative emphasis on the
	primal and dual variables in the projection update in lines
	\ref{eqAlgproj1}-\ref{eqAlgproj2};
	% We use the norm
	% $\|p\|_\gamma^2 = \|(z,{\bf w})\|_\gamma^2 = \gamma\|z\|^2+\sum_{i=1}^{n-1} \|w_i\|^2$ in the
	% primal-dual space $\boldsymbol{\mathcal{H}}$ defined in
	% Section~\ref{secMainAss}; 
	see~\eqref{gammanorm} in Section~\ref{secHplane} for more details. 
	%\item $\text{Flag}_{ss}$ which equals ``True" if the sufficiently-large
	%stepsize condition is to be checked and ``False" otherwise. This check is
	%needed to derive convergence rates, although it isn't needed for overall
	%convergence.
	\item Sequences of errors $\{e_i^k\}_{k\geq 1}$ for $i\in\Iback$ \col{modeling}	
	inexact computation of the proximal steps.
\end{itemize}

\col{ 
In the form directly presented in Algorithm~\ref{AlgfullyAsync}, the delay
indices $d(i,k)$ may seem unmotivated; it might seem best to always select
$d(i,k) = k$.  However, these indices can play a critical role in modeling
asynchronous parallel implementation.   
%\input{old_async_sec}
%%PJ: above section removed to de-emphasize async. 
There are many ways in which Algorithm~\ref{AlgfullyAsync} could be
implemented in various parallel computing environments; a specific suggestion
for asynchronous implementation of a closely related class of algorithms is
developed in~\cite[Section 3]{eckstein2017simplified}. 

The error parameters $e_i^k$ for the proximal steps would simply be zero for
proximal steps that are calculated exactly.  When nonzero, they would not
typically in practice be explicitly chosen prior to calculating $x_i^k$ and
$y_i^k$, but instead implicitly defined by some (likely iterative) procedure
for approximating the $\prox$ operation.  We present the error parameters as
shown in order to avoid cluttering the algorithm description with additional
loops and abstractions as
in~\cite{eckstein2017approximate,eckstein2017relative}.}

%% JE did not like this sentence.  The coordination process could still be a 
%% bottleneck if done in an excessively centralized way even when the matrices
%% are the identity
% In the special case where the matrices are all equal to the identity, the
% coordination process is trivial and can be done by a designated node.

%PJ: I think we should mention the identity case since it is straightforward
%and it is what we use in the simulations.

% When each $G_i$ is the identity, this coordination can easily be
% performed in a decentralized manner, or could be performed by a single
% processor. If the $G_i$ take a more complicated form and are stored in local
% processor memory, the coordination phase becomes more involved since the terms
% $G_i^* w_i^k$ would need to be computed on the core which has direct memory
% access to $G_i$. Nevertheless, such architectures are possible.
\col{ 
\subsection{A Block-Iterative Implementation}
Before proceeding with the analysis of Algorithm \ref{AlgfullyAsync}, we
present a somewhat simplified block-iterative version. This version eliminates
the possibility of delays, setting $d(i,k) \equiv k$. The strategy for
deciding which operators $I_k$ to select at each iteration is left open for
the time being and is determined entirely by the algorithm implementer.
We will propose one specific strategy 
for the case $\lvert{I_k}\rvert \equiv1$
in Section \ref{secGreed}, but one may use any approach conforming to
Assumption~\ref{assAsync}(\ref{quasicyclic}) below.}

\begin{algorithm}[h!]
	\DontPrintSemicolon
	\SetKwInOut{Input}{Input}
	\SetKwInOut{Parameters}{Parameters}
\col{ 
	\Input{$(z^1,{\bf w}^1)\in \boldsymbol{\mathcal{H}}$, $(x_i^0,y_i^0)\in\mathcal{H}_i^2$ for $i=1,\ldots,n$.}
	\Parameters{$\{I_k\}_{k\in\nN}$ where $I_k\subseteq\{1,\ldots,n\}$, $0<\underline{\beta}\leq\overline{\beta}<2$, $\gamma>0$.}
	
	\For{$k=1,2,\ldots$}
	{   		
		
			\For{$i\in I_k$}
			{ 
			  \tcc{Loop over the blocks chosen to be updated according to user-supplied rule $\{I_k\}$}
				\If{$i\in\Iback$}
				{
					$a = G_i z^{k}+\rho_{i}^{k} w_i^{k}+e_i^k$\label{lineaupdate2}
					\tcc*{do a backward step}
					$x_i^k = \prox_{\rho_{i}^{k} T_i}(a)
					$\label{LinebackwardUpdate2}\;
					$
					y_i^k = (\rho_{i}^{k})^{-1}
					\left(
					a - x_i^k
					\right)
					$\label{lineBackwardUpdateY2}\;

				}
				\Else
				{\label{LineForwardUpdate2}
					$					
					x_i^k = G_i z^{k}-\rho_{i}^{k}
					( T_i G_i z^{k} - w_{i}^{k}),
					$\label{ForwardxUpdate2}
					\tcc*{do two forward steps}            	
					$
					y_i^k = T_i x_i^k.
					$
					\label{ForwardyUpdate2}\;
					
				}
			}
			
			For $j\notin I_k$, set $(x_j^k,y_j^k)=(x_j^{k-1},y_j^{k-1})$\tcc*{other blocks unchanged}
			\tcc{The projection procedure is then the same as lines \ref{lineCoordStart}-\ref{lineCoordEnd} of Algorithm \ref{AlgfullyAsync}}
		
	}
}
	\caption{\col{Simplified Block-Iterative 
	              %Projective Splitting Scheme to update $(x_i^k,y_i^k)$
	              Algorithm.}}
	\label{AlgBlockIter}
\end{algorithm}

%!TEX root = forwProjMP-R1.tex
\section{Convergence Analysis}
%\section{Weak Convergence of the Asynchronous Algorithm}

We now start our analysis of the weak convergence of the iterates of Algorithm
\ref{AlgfullyAsync} to a solution of problem~(\ref{probCompMonoMany}). While
the overall proof strategy is similar to \cite{eckstein2017simplified}, considerable
innovation is required to incorporate the forward steps.
\col{Before the main proof, we will first state our assumptions on Algorithm \ref{AlgfullyAsync} and its parameters, state the main convergence theorem, and sketch an outline of the proof.

\subsection{Algorithm Assumptions}
We start with our assumptions about parameters of Algorithm
\ref{AlgfullyAsync}.  With the exception of~\eqref{eqForwUpper}, they are
taken from~\cite{combettes2016async,eckstein2017simplified} and use the notation
of~\cite{eckstein2017simplified}. }

\begin{assumption}\label{assAsync}
	For Algorithm \ref{AlgfullyAsync}, assume:
	\begin{enumerate}
		\item \label{quasicyclic} For some fixed integer $M\geq 1$, 
		each index $i$ in $1,\ldots,n$
		is in $I_k$ at least once every $M$ iterations, that is,
		$$
		(\forall\,j\geq 1) \qquad \bigcup\limits_{k=j}^{j+M-1}I_k = \{1,\ldots,n\}.
		$$
		\item For some fixed integer $D\geq 0$, we have $k-d(i,k)\leq D$ for
		all $i, k$ with $i\in I_k$. That is, there is a constant bound on the
		extent to which the information $z^{d(i,k)}$ and $w_i^{d(i,k)}$ used
		in lines~\ref{lineaupdate} and~\ref{ForwardxUpdate} is out of
		date.
	\end{enumerate}
\end{assumption}

\begin{assumption}\label{assStep}
	The stepsize conditions for weak convergence of Algorithm \ref{AlgfullyAsync} are:
	\begin{align}\nonumber
	\underline{\rho} &\triangleq 
	\min_{i=1,\ldots,n} \left\{ \inf_{k\geq 1} \rho_i^k\right\} > 0 &
	\overline{\rho} &\triangleq
	\max_{i\in\Iback} \left\{\sup_{k\geq 1} \rho_i^k \right\} < \infty 
	\end{align}
	\vspace{-3.5ex}
	\begin{align} \label{eqForwUpper}
	(\forall\,i\in\Iforw)\quad
	\overline{\rho}_i &\triangleq 
	\limsup_{k\to\infty} \rho_i^k < \frac{1}{L_i}.
	\end{align}
	% \begin{align}
	% 	\label{steps1}
	% &\exists \underline{\rho}>0: \forall i =1,\ldots,n,\forall k\geq 1,\quad 
	% 	\rho_i^k\geq \underline{\rho}
	% 	\\\label{steps2}
	% &\forall i\in\Iback\quad
	% \exists \overline{\rho}:0<\overline{\rho}<\infty\text{ and }\,\,
	% 	 \lim\sup_k\rho_i^k\leq \overline{\rho}
	% 	\\\label{steps3}
	% &
	% \forall i\in \Iforw\quad 
	% \exists\overline{\rho}_i:0<\overline{\rho}_i<\infty\text{ and }\,\,
	% 	\lim\sup_k\rho_i^k\leq \overline{\rho}_{i}<\frac{1-\sigma}{L_i+\hat{\sigma}}.
	% 	\end{align}
\end{assumption}
\noindent Note that (\ref{eqForwUpper}) allows the stepsize to be larger than
the right hand side for a finite number of iterations.

\col{The last assumption concerns the possible errors $e_i^k$ in computing the
proximal steps and requires some notation from~\cite{eckstein2017simplified}:
for all $i$ and $k$, define}
% Finally, we state our assumptions regarding possible errors in the proximal
% steps. The assumptions for the errors in evaluating the proximal operators are
% taken exactly from \cite{eckstein2017simplified} and we include them here for
% completeness.  To this end, we also introduce some 
\begin{align*}
\nonumber
S(i,k)
&=
\{j\in\mathbb{N}:j\leq k,i\in I_j \}
&
s(i,k)
&=
\left\{
\begin{array}{ll}
\max S(i,k),\quad& \text{when } S(i,k)\neq \emptyset\\
0,&\text{otherwise.}
\end{array}
\right.
\end{align*}
In words, $s(i,k)$ is the most recent iteration up to and including $k$ in
which the index-$i$ information in the separator was updated, or $0$ if
index-$i$ information has never been processed. Assumption \ref{assAsync}
ensures that $0\leq k-s(i,k)< M$.

Next, for all $i=1,\ldots,n$ and iterations $k$, define
$
l(i,k) = d\big(i,s(i,k)\big).
$
Thus, $l(i,k)$ is the iteration in which the algorithm generated the
information $z^{l(i,k)}$ and $w_{i}^{l(i,k)}$ used to compute the current
point $(x_i^k,y_i^k)$. For initialization purposes, we set $d(i,0) = 0$.
% ; note that
% the initial points $(x_i^0,y_i^0)$ are arbitrary and were not computed from
% any instance of $(z^k,w_i^k)$.

\begin{assumption}\label{assErr}
	%The error assumptions for Algorithm \ref{AlgfullyAsync} are as follows.
	The error sequences $\{e_i^k\}$ are bounded for all $i\in\Iback$. For some $\sigma$
	with $0\leq\sigma<1$ the following hold for all
	$k \geq 1$:
	\begin{align}
	(\forall i\in\Iback)&&
	\langle G_i z^{l(i,k)} - x_i^k,e_i^{s(i,k)}\rangle
	&\geq
	-\sigma\|G_i z^{l(i,k)} - x_i^k\|^2
	\label{err1}\\
	(\forall i\in \Iback)&&
	\langle e_i^{s(i,k)},y_i^k - w_i^{l(i,k)}\rangle 
	&\leq \rho^{l(i,k)}_i\sigma \|y_i^k - w_i^{l(i,k)}\|^2.
	\label{err2}
	%	\\
	%(\forall i\in\Iforw)&&
	%	\langle e_{i}^{s(i,k)},  T_iG_i z^{l(i,k)} - w_{i}^{l(i,k)}\rangle 
	%	&\leq
	%	\rho^{l(i,k)}_{i}\sigma \|T_iG_i z^{l(i,k)} - w_{i}^{l(i,k)}\|^2
	%	\label{err3} \\ 
	%(\forall i\in\Iforw)&&
	%	\langle G_i z^{l(i,k)} - x_{i}^k,\hat{e}_{i}^{s(i,k)}\rangle
	%	&\geq-\hat{\sigma}\|G_i z^{l(i,k)} - x_{i}^k\|^2. \label{err5}
	\end{align}
\end{assumption}

\col{ 
\subsection{Main Result}
We now state the main technical result of the paper, asserting weak convergence
of Algorithm \ref{AlgfullyAsync} to a solution of~\eqref{probCompMonoMany}.
\begin{theorem}\label{ThmAsync}
	Suppose Assumptions \ref{AssMonoProb}-\ref{assErr} hold. 
	If Algorithm~\ref{AlgfullyAsync} terminates at line~\ref{lineReturn}, then its 
	final iterate is a member of the extended solution set $\calS$.
	Otherwise, the
	sequence $\{(z^k,\bw^k)\}$ generated by Algorithm~\ref{AlgfullyAsync}
	converges weakly to some point $(\bar z,\overline{\bw})$ in the extended
	solution set $\calS$ of~\eqref{probCompMonoMany} defined
	in~\eqref{defCompExtSol}. Furthermore, $x_i^k\rightharpoonup G_i \bar z$
	and $y_i^k\rightharpoonup \overline{w}_i$ for all $i=1,\ldots,n-1$,
	$x_{n}^k\rightharpoonup \bar z$, and $y_n^k\rightharpoonup
	-\sum_{i=1}^{n-1}G_i^* \overline{w}_i$. 
\end{theorem}

Before establishing this result, we first outline the basic proof strategy:
first, since it arises from a projection method, the sequence $\{p^k\}$ has
many desirable properties, as outlined in Lemma~\ref{lemFejer}. In particular,
Lemma \ref{lemFejer}(\ref{pointFejer}) allows us to establish (weak) convergence of
the entire sequence to a solution if we can prove that all its limit points must be elements
of $\calS$. To that end, we will establish that
\begin{align}\label{eqKey}
(\forall i=1,\ldots,n):\quad  G_i z^k - x_i^k\to 0\text{ and }y_i^k - w_i^k\to 0.
\end{align}
By the definition of $w_n^k$ on line \ref{lineCoordEnd}, the iterates
$(z^k,\bw^k)$ always meet the linear relationship between the $w_i$ 
implicit in the definition~\eqref{defCompExtSol} of~$\calS$, whereas the
$(x_i^k,y_i^k)$ iterates always meet its inclusion conditions.  Therefore,
if~\eqref{eqKey} holds, then one may expect all limit points of $(z^k,\bw^k)$
to satisfy all the conditions in~\eqref{defCompExtSol} and thus to to lie in
$\calS$. In finite dimension, this result is in fact fairly straightforward to
establish.
% To see why this property is important, suppose that $G_i z^k = x_i^k$ and
% $y_i^k =w_i^k$. Now, for all $k\geq 1$ and $i\in\{1,\ldots,n\}$, we have
% $y_i^k\in T_i x_i^k$ by~\eqref{defprox2} and lines
% \ref{LinebackwardUpdate}--\ref{lineBackwardUpdateY} and \ref{ForwardyUpdate}
% of Algorithm \ref{AlgfullyAsync}. Thus, $w_i^k = y_i^k\in T_i x_i^k = T_i G_i
% z^k$.  By the definition of $w_n^k$ on line \ref{lineCoordEnd}, we now have that
% $\sum_{i=1}^n G_i^*w_i^k = 0$, so it follows that $(z^k,\bw^k)\in \calS$ and
% $z^k$ solves \eqref{probCompMonoMany}. In finite dimensions, it is then fairly
% straightforward to prove that if \eqref{eqKey} holds and $(z^{i_k},\bw^{i_k})$
% converge to $(z^\infty,\bw^\infty)$ for some subsequence $i_k$, then
% $(z^\infty,\bw^\infty)$ is in $\calS$, so that we may apply Lemma
% \ref{lemFejer}(\ref{pointFejer}). 
The general Hilbert space proof is more
delicate, but was carried out in \cite[Proposition 2.4]{alotaibi2014solving}.

In order to establish \eqref{eqKey}, 
we will first establish that the gradient of the affine function $\varphi_k$ defined in \eqref{hyper} remains bounded. Then, consider the projection update as written on line \ref{eqProjectUpdate} of Algorithm \ref{AlgAbstractProject}, which implies
\begin{align*}
\|p^{k+1} - p^k\| =
\frac{\beta_k\max\{0,\varphi_k(p^k)\}}
{\|\nabla\varphi_k\|_{\bcalH}}.
\end{align*}
If $\|\nabla\varphi_k\|_{\bcalH}$ remains bounded, then since Lemma \ref{lemFejer}(\ref{pointDiff20}) implies the left hand side goes to $0$,  $\limsup \varphi_k(p^k)\leq 0$.

The key to establishing \eqref{eqKey} is then to show that the cut provided by the separating hyperplane is ``sufficiently deep". This will amount to proving (in simplified form) 
\begin{align}\label{eqSimpleLow}
\varphi_k(p^k)\geq C\sum_{i=1}^n \|G_i z^k - x_i^k\|^2
\end{align}
for some $C>0$. Then, using $\limsup \varphi_k(p^k)\leq 0$, the first part of
\eqref{eqKey} follows. The second part of \eqref{eqKey} is then established by
a similar argument. }

% We focus on solving Prob. (\ref{probCompMonoMany}). 

\subsection{Preliminary Lemmas}\label{secAssBlock}

%% JE replaced this with a shorter paragraph that comes earlier
% In the context of the possible computing architecture discussed in Sec.
% \ref{secAlgOverview}, this assumption can be interpreted as follows. Recall
% that in this architecture ``iteration number" is the number of times the
% coordinating node has updated $p^k$ by projecting onto a separating
% hyperplane.  The first point in the assumption limits the number of times the
% coordinating node can update $p^k$ without receiving a new copy of
% $(x_i^k,y_i^k)$ from any processing node. The second point is that there is a
% maximum number of iterations between when a processing node receives a new
% copy of $(z^k,w^k_i)$ and computes and communicates the updated
% $(x_i^k,y_i^k)$ to the coordinating node. We actuate the second assumption by
% limiting the extent to which the inputs to this computation can be out-of-date
% from the point of view of the coordinating node. In practice, both points are
% a measure of the heterogeneity of the network. If a processor is far slower
% than the others and many coordination updates as in Lines
% \ref{lineCoordStart}--\ref{lineCoordEnd} occur between completions by the slow
% processor, then this increases both $M$ and $D$. In addition, $M$ is also
% effected by the work assignment strategy as encapsulated by $I_k$. If the
% algorithm rarely requests for the $i$th operator to be updated, then this
% would increase $M$.

To begin the proof of Theorem~\ref{ThmAsync}, we first deal with the situation in
which Algorithm \ref{AlgfullyAsync} terminates at line \ref{lineReturn}.

\begin{lemma}  \label{lemFiniteTerm}
	For Algorithm~\ref{AlgfullyAsync}:
	\begin{enumerate}
		\item 
	Suppose Assumption \ref{AssMonoProb} holds. If the algorithm terminates
	 via line \ref{lineReturn}, then $(z^{k+1},{\bf w}^{k+1})\in\calS$.
	 Furthermore $x_i^k = G_i z^{k+1}$ and $y_i^k = w_i^{k+1}$ for
	 $i=1,\ldots,n-1$, and $x_n^k = z^{k+1}$ and $y_n^k =
	 -\sum_{i=1}^{n-1}G_i^* w_i^{k+1}$.
	\item Additionally, 
	suppose Assumption \ref{assAsync}(1) holds. Then if $\pi_k=0$ at some
    iteration $k\geq M$, the algorithm terminates via line~\ref{lineReturn}.
	\end{enumerate}
\end{lemma}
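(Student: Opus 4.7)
The key observation is that the quantity $\pi_k = \|u^k\|^2 + \gamma^{-1}\|v^k\|^2$ computed on line \ref{linePiUpdate} is exactly the squared norm $\|\nabla\varphi_k\|_\gamma^2$ under the gradient formula given in Lemma \ref{LemGradAffine}(\ref{item:gradForm}). Indeed, that formula gives $\nabla\varphi_k = (v^k/\gamma,\,u_1^k,\ldots,u_{n-1}^k)$, so with the weighted norm \eqref{gammanorm},
\begin{equation*}
\|\nabla\varphi_k\|_\gamma^2 = \gamma\cdot\|v^k/\gamma\|^2 + \sum_{i=1}^{n-1}\|u_i^k\|^2 = \gamma^{-1}\|v^k\|^2 + \|u^k\|^2 = \pi_k.
\end{equation*}
Hence $\pi_k = 0$ is equivalent to $\nabla\varphi_k = 0$, which is the hypothesis of Lemma \ref{LemGradAffine}(\ref{lem0grad}).

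\textbf{Part 1.} Termination via line~\ref{lineReturn} requires both $\pi_k = 0$ and $\bigcup_{j=1}^{k} I_j = \{1,\ldots,n\}$. First I would verify that $y_i^k \in T_i x_i^k$ for every $i=1,\ldots,n$: whenever $i \in I_j$ at some iteration $j$, the pair $(x_i^j,y_i^j)$ lies on the graph of $T_i$ (for $i \in \Iback$ this is the defining property of the proximal step applied to the perturbed input $a$; for $i \in \Iforw$ it holds by construction since $y_i^j = T_i x_i^j$ and $T_i$ is single-valued). A simple induction through the $(x_i^k,y_i^k) = (x_i^{k-1},y_i^{k-1})$ branch then preserves this graph membership at the termination index $k$, since the hypothesis $\bigcup_{j=1}^k I_j = \{1,\ldots,n\}$ guarantees that every index has been processed at least once by iteration $k$. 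Having established $y_i^k\in T_i x_i^k$ for all $i$, I invoke Lemma \ref{LemGradAffine}(\ref{lem0grad}) to conclude $(x_n^k,y_1^k,\ldots,y_{n-1}^k)\in\calS$, which is precisely $(z^{k+1},\bw^{k+1})$ by the assignments on line~\ref{lineReturn}. The four ``furthermore'' identities then read off directly: $x_n^k = z^{k+1}$ and $y_i^k = w_i^{k+1}$ for $i < n$ are definitional, $x_i^k = G_i z^{k+1}$ follows from $u_i^k = x_i^k - G_i x_n^k = 0$, and $y_n^k = -\sum_i G_i^* w_i^{k+1}$ follows from $v^k = 0$.

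\textbf{Part 2.} Assuming Assumption \ref{assAsync}(1) and $k \geq M$, I apply that assumption with $j = k - M + 1 \geq 1$ to obtain $\bigcup_{\ell=k-M+1}^{k} I_\ell = \{1,\ldots,n\}$, which immediately yields $\bigcup_{j=1}^{k} I_j = \{1,\ldots,n\}$. Combined with the hypothesis $\pi_k = 0$, both conditions in the outer \texttt{if} on line~\ref{lineReturn} are satisfied, so the algorithm terminates there.

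\textbf{Anticipated obstacle.} There is no deep difficulty; essentially everything reduces to matching definitions. The only point requiring slight care is the inductive claim that $y_i^k \in T_i x_i^k$ holds at the termination iteration for every $i$, since the proximal step is computed only approximately. The error $e_i^k$ is additive inside the input $a$ on line~\ref{lineaupdate}, however, and the output satisfies $x_i^k + \rho_i^{d(i,k)} y_i^k = a$ with $y_i^k \in T_i x_i^k$ by the exact definition of $\prox$ applied to the perturbed input, so graph membership still holds regardless of the size of $e_i^k$.
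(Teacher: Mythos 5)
Your proposal is correct and follows essentially the same route as the paper's proof: identify $\pi_k$ with $\|\nabla\varphi_k\|_\gamma^2$, invoke Lemma~\ref{LemGradAffine}(\ref{lem0grad}) to place $(x_n^k,y_1^k,\ldots,y_{n-1}^k)$ in $\calS$, read the ``furthermore'' identities off the vanishing gradient, and obtain the union condition for $k\geq M$ from Assumption~\ref{assAsync}(1). Your added care about the inexact proximal step (the error being absorbed into the input $a$ so that graph membership is exact) is a point the paper leaves implicit but is handled correctly.
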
 
\begin{proof}
	The condition $\cup_{j=1}^k I_j=\{1,\ldots,n\}$ on line~\ref{allprocessed}
	implies that $y_i^k\in T_i
	x_i^k$ for $i=1,\ldots,n$. Let $\varphi_k$ be the affine function defined in \eqref{defCompHplane}.
	Simple algebra verifies that for $u^k$ and $v^k$ defined on
	lines \ref{lineCoordStart} and \ref{lineVupdate}, $u_{i}^k =
	\nabla_{w_i}\varphi_k$ for $i=1,\ldots,n-1$, $v^k =
	\gamma\nabla_z\varphi_k$, and $\pi_k = \|\nabla\varphi_k\|_\gamma^2$.
	%If additionally, $y_i^k\in T_i x_i^k$ for $i=1,\ldots,n$, 
	 If for any such $k$, $\pi_k$ equals $0$, then this
	implies $\nabla\varphi_k = 0$.	
	Then we can invoke Lemma~\ref{LemGradAffine}(\ref{lem0grad}) to conclude
	that $(x_n^k,y_1^k,\ldots,y_{n-1}^k)\in \calS$.  Thus, the algorithm
	terminates with 
	$$(z^{k+1},w_1^{k+1},\ldots,w_{n-1}^{k+1})=(x_n^k,y_1^k,
	\ldots,y_{n-1}^k)\in\calS.$$ Furthermore, when $\nabla\varphi_k = 0$, 
	Lemma~\ref{LemGradAffine}(\ref{item:gradForm}) leads to
    \begin{align*}
	\sum_{i=1}^{n-1} G_i^* y_i^k+ y_n^k &= 0 &
	x_i^k - G_i x_{n}^k &= 0 \quad i=1,\ldots,n-1.
    \end{align*}
    We immediately conclude that $y_n^k = -\sum_{i=1}^{n-1} G_i^* y_i^k =
	-\sum_{i=1}^{n-1} G_i^* w_i^{k+1}$ and, for $i=1,\ldots,n-1$, that  
	$x_i^k = G_i x_n^k = G_i z^k$.
	
	Finally, note that for any $k\geq M$,  $\cup_{j=1}^k I_j=\{1,\ldots,n\}$
	by Assumption \ref{assAsync}(1). Therefore whenever $\pi_k = 0$ for $k\geq
	M$, the algorithm terminates via line \ref{lineReturn}.
	\ourqed 
\end{proof}
Lemma 5 asserts that if the algorithm terminates finitely, then the final
iterate is a solution. For the rest of the analysis, we therefore assume that
$\pi_k \neq 0$ for all $k\geq M$. Under Assumption \ref{assAsync},
Algorithm~\ref{AlgfullyAsync} is a projection algorithm:
\begin{lemma}\label{lemIsProject}
	Suppose that Assumption~\ref{assMono} holds for
	problem~\eqref{probCompMonoMany} and~Assumption \ref{assAsync}(1) holds for
	Algorithm~\ref{AlgfullyAsync}. Then, for all $k\geq M$ 
	such that $\pi_k$ defined on Line \ref{linePiUpdate} is nonzero, Algorithm
	\ref{AlgfullyAsync} is an instance of Algorithm~\ref{AlgAbstractProject}
	with $\bcH = \calH_0 \times \cdots \times \calH_{n-1}$ and the inner product
	in~\eqref{gammanorm}, $\calS$ as defined in \eqref{defCompExtSol}, and
	$\varphi_k$ as defined in~\eqref{defHyper}.  All the statements of
	Lemma~\ref{propFejer} hold for the sequence $\{p^k\} =
	\{(z^k,w_1^k,\ldots,w_{n-1}^k)\}$ generated by
	Algorithm~\ref{AlgfullyAsync}.
\end{lemma}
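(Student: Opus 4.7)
The plan is to verify that, once $k \geq M$, Algorithm~\ref{AlgfullyAsync} is literally an instance of the generic separator-projector template of Algorithm~\ref{AlgAbstractProject} on the Hilbert space $\bcH$ equipped with the weighted inner product $\inner{\cdot}{\cdot}_\gamma$ from~\eqref{gammanorm}, with separator function $\varphi_k$ as in~\eqref{defHyper}. Given that, the three conclusions of Lemma~\ref{propFejer} apply to the sequence $\{p^k\}$, since $\calS$ is closed and convex by Lemma~\ref{lemClosed}.

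The first step is to check that the hypothesis ``$y_i^k \in T_i x_i^k$ for $i = 1,\ldots,n$'' of Lemma~\ref{LemGradAffine}(3)--(4) is in force whenever $k \geq M$. By Assumption~\ref{assAsync}(1), for every index $i$ there is some $j \in \{k-M+1,\ldots,k\}$ with $i \in I_j$, and the stored pair $(x_i^k,y_i^k)$ equals the pair computed at that most recent $j$. For $i \in \Iback$, the definition of the proximal operator together with lines~\ref{lineaupdate}--\ref{lineBackwardUpdateY} yields $y_i^{j} \in T_i x_i^{j}$ exactly (the error term $e_i^{j}$ is absorbed into the argument $a$ of the prox and does not spoil the inclusion). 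For $i \in \Iforw$, line~\ref{ForwardyUpdate} sets $y_i^{j} = T_i x_i^{j}$ directly. Hence $y_i^k \in T_i x_i^k$ for all $i$. Lemma~\ref{LemGradAffine} then tells us that $\varphi_k$ is affine, $\varphi_k(p)\leq 0$ on $\calS$, and has gradient
\begin{equation*}
\nabla \varphi_k
=
\Bigl(\tfrac{1}{\gamma} v^k,\; u_1^k,\; \ldots,\; u_{n-1}^k\Bigr)
\end{equation*}
with respect to $\inner{\cdot}{\cdot}_\gamma$, where $u_i^k$ and $v^k$ are the quantities defined on lines~\ref{lineCoordStart} and~\ref{lineVupdate}.

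The second step is the identification of the scalars. From the expression for $\nabla\varphi_k$ and~\eqref{gammanorm} we get
\begin{equation*}
\|\nabla\varphi_k\|_\gamma^2
= \gamma\cdot \tfrac{1}{\gamma^2}\|v^k\|^2 + \sum_{i=1}^{n-1} \|u_i^k\|^2
= \gamma^{-1}\|v^k\|^2 + \|u^k\|^2
= \pi_k,
\end{equation*}
so the standing hypothesis $\pi_k \neq 0$ is exactly the condition $\nabla\varphi_k \neq 0$ required by Algorithm~\ref{AlgAbstractProject}. The choice $\beta_k \in [\underline{\beta},\overline{\beta}] \subset (0,2)$ coincides in both algorithms. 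Setting $\alpha_k = (\beta_k/\pi_k)\max\{0,\varphi_k(p^k)\}$ as on line~\ref{lineComputeHplane} et seq., the updates on lines~\ref{eqAlgproj1}--\ref{eqAlgproj2} read $z^{k+1} = z^k - \alpha_k\cdot \gamma^{-1}v^k$ and $w_i^{k+1} = w_i^k - \alpha_k u_i^k$, which is component-wise exactly $p^{k+1} = p^k - \alpha_k \nabla\varphi_k$. This is precisely the $\beta_k$-relaxed projection step of Algorithm~\ref{AlgAbstractProject}.

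With those identifications in place, the iterations $k \geq M$ constitute a bona fide instance of Algorithm~\ref{AlgAbstractProject}, and the conclusions of Lemma~\ref{propFejer} follow for the tail sequence $\{p^k\}_{k\geq M}$; appending the finite prefix $\{p^1,\ldots,p^{M-1}\}$ does not affect boundedness, successive-difference convergence, or weak-cluster-point conclusions. The one subtlety to be careful about is exactly this handling of the first $M-1$ iterations, where $y_i^k \in T_i x_i^k$ may fail for indices not yet processed, so that $\varphi_k$ is not guaranteed to be a legitimate separator; since the projection update is merely a translation by a bounded quantity in each such iteration, this prefix is harmless, and that is essentially why the statement of the lemma restricts attention to $k \geq M$.
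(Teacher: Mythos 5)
Your proposal is correct and follows essentially the same route as the paper's proof: verify $y_i^k \in T_i x_i^k$ for all $i$ once $k \geq M$ via Assumption~\ref{assAsync}(1), invoke Lemma~\ref{LemGradAffine} to get the separator properties and the gradient formula, identify $\pi_k = \|\nabla\varphi_k\|_\gamma^2$ and the updates on lines~\ref{eqAlgproj1}--\ref{eqAlgproj2} with the relaxed projection of Algorithm~\ref{AlgAbstractProject}, and close with Lemma~\ref{lemClosed} to apply Lemma~\ref{propFejer}. Your additional remarks---that the prox error enters only through the argument $a$ and so does not break the inclusion $y_i^k \in T_i x_i^k$, and that the finite prefix $k < M$ is harmless for the asymptotic conclusions---are correct and merely make explicit what the paper leaves implicit.
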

\begin{proof}
For $k\geq M$ in Algorithm \ref{AlgfullyAsync}, by Assumption \ref{assAsync}(1) 
all $(x_i^k,y_i^k)$ have been updated at least once using either lines
\ref{LinebackwardUpdate}--\ref{lineBackwardUpdateY} or lines
\ref{ForwardxUpdate}--\ref{ForwardyUpdate}, and thus $y_i^k\in T_i x_i^k$ for
$i=1,\ldots,n$. Therefore, Lemma~\ref{LemGradAffine} implies that 
% $\varphi_k$
% defined in \eqref{hplane} forms a separating hyperplane for the set $\calS$,
% that is, for any $(z,{\bf w})\in\calS$, we must have 
$\varphi_k(z,{\bf w})\leq 0$. 
%Therefore for $k\geq M$ twe can apply Lemma \ref{propFejer}. 
		
Next we verify that lines
\ref{lineCoordStart}-\ref{lineCoordEnd} of Algorithm~\ref{AlgfullyAsync} are an
instantiation of line \ref{eqProjectUpdate} of
Algorithm~\ref{AlgAbstractProject} using $\varphi_k$ as defined
in~\eqref{hplane} and the norm defined in~\eqref{gammanorm}.  
% Recall that the norm used in the primal-dual space is $\|p\|_\gamma =
% \sqrt{\gamma\|z\|^2 + \sum_{i=2}^n \|w_i\|^2}$ and that the gradient
% $\nabla\varphi_k$ is defined with respect to this norm and the corresponding
% inner product. 
As already shown, $\pi_k = \|\nabla\varphi_k\|_\gamma^2$.
Considering the decomposition of $\varphi_k$ in~\eqref{eqAffineExpress}, it
can then be seen that lines~\ref{linePiUpdate}-\ref{eqAlgproj2} of
Algorithm~\ref{AlgfullyAsync} implement the projection on
line~\ref{eqProjectUpdate} of Algorithm~\ref{AlgAbstractProject}.

To conclude the proof, we note that Lemma~\ref{lemClosed} asserts that $\calS$ is
closed and convex, so all the results of Lemma~\ref{propFejer} apply.
\ourqed 
\end{proof}

% We formalize the use of $l(i,k)$ in the following Lemma.  
The next two lemmas concern the indices $s(i,k)$ and $l(i,k)$ defined in
Section~\ref{assAsync}.

\begin{lemma}
	\label{lemUpdates}
Suppose Assumption \ref{assAsync}(1) holds. For all iterations $k\geq  M$, if Algorithm \ref{AlgfullyAsync} has not already terminated, then the
updates may be written as
\begin{align}
\label{eqBack}
(\forall i\in \Iback)&&
x^k_i+\rho_i^{l(i,k)}y^k_i 
&=
G_i z^{l(i,k)}+\rho^{l(i,k)}_iw_i^{l(i,k)}+e_i^{s(i,k)}, 
\\\nonumber 
&&y^k_i&\in T_i x^k_i,
	\\\label{eqXForw}
(\forall i\in\Iforw)&&
x^k_{i} 
&=
G_{i} z^{l(i,k)} - \rho^{l(i,k)}_{i}(T_{i} G_{i} z^{l(i,k)} - w^{l(i,k)}_{i}),
\\\nonumber 
&&
y^k_{i} 
&=
T_i x^k_{i}.
\end{align}
%\label{eqYForw}
\end{lemma}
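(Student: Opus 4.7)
The plan is to unpack the definitions of $s(i,k)$ and $l(i,k)$ and trace back through the algorithm's update rule to identify which iteration last wrote to the stored pair $(x_i^k,y_i^k)$.

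First I would observe that because $k\geq M$, Assumption~\ref{assAsync}(1) guarantees that every index $i\in\{1,\ldots,n\}$ has appeared in some $I_j$ with $j\in\{k-M+1,\ldots,k\}$, so $S(i,k)\neq\emptyset$ and hence $s(i,k)\geq 1$ is well-defined. Next, I would argue that between iterations $s(i,k)+1$ and $k$, the index $i$ does not belong to any $I_j$ (by the very definition of $s(i,k)$ as the maximum of $S(i,k)$). Consequently the \textbf{else}-branch of Algorithm~\ref{AlgfullyAsync} applies for index $i$ at each of those iterations, propagating $(x_i^j,y_i^j)=(x_i^{j-1},y_i^{j-1})$. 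A trivial induction then yields $(x_i^k,y_i^k)=(x_i^{s(i,k)},y_i^{s(i,k)})$.

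I would then separate into the two cases. For $i\in\Iback$, at iteration $j=s(i,k)$ the algorithm executed lines~\ref{lineaupdate}--\ref{lineBackwardUpdateY} with $d(i,j)=d(i,s(i,k))=l(i,k)$, so that
\[
a=G_i z^{l(i,k)}+\rho_i^{l(i,k)} w_i^{l(i,k)}+e_i^{s(i,k)},\qquad x_i^{s(i,k)}=\prox_{\rho_i^{l(i,k)} T_i}(a),
\]
and $y_i^{s(i,k)}=(\rho_i^{l(i,k)})^{-1}(a-x_i^{s(i,k)})$. The defining property of the proximal step recalled in the preliminaries gives $y_i^{s(i,k)}\in T_i x_i^{s(i,k)}$ together with the relation $x_i^{s(i,k)}+\rho_i^{l(i,k)}y_i^{s(i,k)}=a$. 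Substituting $a$ and using $(x_i^k,y_i^k)=(x_i^{s(i,k)},y_i^{s(i,k)})$ yields exactly~\eqref{eqBack}. For $i\in\Iforw$, iteration $j=s(i,k)$ executed lines~\ref{ForwardxUpdate}--\ref{ForwardyUpdate} with the same identification $d(i,j)=l(i,k)$, which gives~\eqref{eqXForw} directly.

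This proof is essentially bookkeeping, so I do not expect a genuine obstacle. The one point requiring care is to argue cleanly that the \textbf{else}-branch really does preserve $(x_i^k,y_i^k)$ across all the intermediate iterations in $\{s(i,k)+1,\ldots,k\}$, together with the fact that the assumption $k\geq M$ guarantees $s(i,k)\geq 1$ (as opposed to $s(i,k)=0$, in which case $(x_i^k,y_i^k)$ would still be the arbitrary initialization $(x_i^0,y_i^0)$ and no update formula would apply). Once those two points are handled, the rest is direct substitution of $l(i,k)=d(i,s(i,k))$ into the lines of Algorithm~\ref{AlgfullyAsync}.
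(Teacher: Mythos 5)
Your proposal is correct and takes essentially the same route as the paper, which simply notes that the claim "follows from the definition of $l(i,k)$ and $s(i,k)$" since after $M$ iterations every operator has been updated at least once via the relevant lines of Algorithm~\ref{AlgfullyAsync}. Your write-up merely makes explicit the bookkeeping (that $s(i,k)\geq 1$ for $k\geq M$, and that the \textbf{else}-branch propagates $(x_i^{s(i,k)},y_i^{s(i,k)})$ forward to iteration $k$) that the paper leaves implicit.
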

\begin{proof}
The proof follows from the definition of $l(i,k)$ and $s(i,k)$. After $M$
iterations, all operators must have been in $I_k$ at least once. Thus, after
$M$ iterations, every operator has been updated at least once using either the
proximal step on lines~\ref{lineaupdate}-\ref{lineBackwardUpdateY} or the
forward steps on lines~\ref{ForwardxUpdate}-\ref{ForwardyUpdate} of
Algorithm~\ref{AlgfullyAsync}. Recall the variables defined to ease
mathematical presentation, namely $G_{n} = I$ and $w_{n}^k$ defined
in~\eqref{defwn} and line~\ref{lineCoordEnd}.
%Simple alebraic manipulations verify that lines
%\ref{eqAlgproj1}--\ref{eqAlgproj2} of Algorithm \ref{Algupdate1} are
%equivalent to (\ref{projectUpdate}), which in turn is equivalent to the
%projection step in Algorithm \ref{AlgGenericProject}.
\ourqed 
\end{proof}

We now derive some important properties of $l(i,k)$. The following result was
proved in Lemma 6 of \cite{eckstein2017simplified} but since it is short we
include the proof here.
\begin{lemma}\label{LemBoundOnl}
Under Assumption \ref{assAsync}, $k-l(i,k) < M+D$ for all $i=1,\ldots,n$ and
iterations $k$.
\end{lemma}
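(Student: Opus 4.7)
\bigskip

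\noindent\textbf{Proof proposal.} The plan is to bound $k - l(i,k)$ by decomposing it through the intermediate index $s(i,k)$ and applying each of the two parts of Assumption~\ref{assAsync} separately to the resulting pieces. Specifically, write
\[
k - l(i,k) = \bigl(k - s(i,k)\bigr) + \bigl(s(i,k) - l(i,k)\bigr)
\]
and bound the two summands individually.

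For the first summand, I would use Assumption~\ref{assAsync}(1): since every index appears in $I_k$ at least once during any $M$ consecutive iterations, for any $k \geq 1$ with $s(i,k) \geq 1$ we must have $k - s(i,k) < M$ (otherwise the $M$ iterations $s(i,k)+1,\ldots,s(i,k)+M$ would contain no occurrence of $i$, contradicting the assumption). For the second summand, observe that by the very definition of $s(i,k)$, when $s(i,k) \geq 1$ we have $i \in I_{s(i,k)}$; hence Assumption~\ref{assAsync}(2) applied at iteration $s(i,k)$ gives $s(i,k) - d(i,s(i,k)) \leq D$, i.e., $s(i,k) - l(i,k) \leq D$. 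Adding the two estimates yields $k - l(i,k) < M + D$.

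The one edge case to address is when $S(i,k) = \emptyset$, so that $s(i,k) = 0$ and hence $l(i,k) = d(i,0) = 0$ by the initialization convention introduced just before the lemma. This can only happen when index $i$ has never been processed up to iteration $k$, which by Assumption~\ref{assAsync}(1) forces $k < M$. In that case $k - l(i,k) = k < M \leq M + D$, so the bound still holds.

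The argument is essentially a one-line telescoping combined with the two assumptions, so I do not expect any real obstacle; the only thing to be careful about is the boundary case $s(i,k)=0$, which is handled cleanly by the initialization $d(i,0)=0$ stated in the paragraph preceding the lemma.
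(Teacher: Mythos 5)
Your proposal is correct and follows essentially the same route as the paper: the paper likewise splits $k-l(i,k)$ as $(k-s(i,k))+(s(i,k)-d(i,s(i,k)))$, bounds the first piece strictly by $M$ via Assumption~\ref{assAsync}(1) and the second by $D$ via Assumption~\ref{assAsync}(2), and adds. Your explicit treatment of the boundary case $s(i,k)=0$ is a small additional care the paper leaves implicit, but it does not change the argument.
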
 
\begin{proof}
From the definition, we know that $0\leq k-s(i,k) < M$. Part 2 of
Assumption~\ref{assAsync} ensures that $s(i,k)-l(i,k)=s(i,k) - d(i,s(i,k))\leq
D$.  Adding these two inequalities yields the desired result.
\ourqed 
\end{proof}

\begin{lemma}
	\label{lemDif20}
	Suppose Assumptions~\ref{AssMonoProb} and \ref{assAsync} hold and 
	$\pi_k>0$ for all $k\geq M$.  Then
	$w_i^{l(i,k)}-w_i^k\to 0$ for all $i=1,\ldots,n$ and $z^{l(i,k)}-z^k \to
	0$.
\end{lemma}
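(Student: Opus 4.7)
The plan is to combine the Fej\'er-type result from Lemma~\ref{propFejer}(2) (available via Lemma~\ref{lemIsProject}) with the uniform delay bound from Lemma~\ref{LemBoundOnl}. Since $\pi_k>0$ for all $k\geq M$ by hypothesis, Lemma~\ref{lemIsProject} tells us that from iteration $M$ onward Algorithm~\ref{AlgfullyAsync} is an instance of the generic separator-projection method in the norm $\|\cdot\|_\gamma$ defined in~\eqref{gammanorm}. Therefore Lemma~\ref{propFejer}(2) yields $\|p^{k+1}-p^k\|_\gamma\to 0$, and looking at the components of this norm we obtain $\|z^{k+1}-z^k\|\to 0$ and, for each $i=1,\ldots,n-1$, $\|w_i^{k+1}-w_i^k\|\to 0$.

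Next, I would use a telescoping argument. By Lemma~\ref{LemBoundOnl}, $0\leq k-l(i,k) < M+D$ for every $i$ and every $k$, so
$$
z^{l(i,k)}-z^k = -\sum_{j=l(i,k)}^{k-1}\bigl(z^{j+1}-z^j\bigr)
$$
is a sum of fewer than $M+D$ consecutive one-step increments. The triangle inequality gives
$$
\bigl\|z^{l(i,k)}-z^k\bigr\| \;\leq\; \sum_{j=l(i,k)}^{k-1}\bigl\|z^{j+1}-z^j\bigr\| \;\leq\; (M+D)\!\!\max_{k-M-D\leq j\leq k-1}\bigl\|z^{j+1}-z^j\bigr\|,
$$
and the right-hand side tends to $0$ because $\|z^{j+1}-z^j\|\to 0$. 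The same telescoping argument, applied componentwise, yields $\|w_i^{l(i,k)}-w_i^k\|\to 0$ for $i=1,\ldots,n-1$.

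It remains to handle the index $i=n$, which is not immediate because $w_n^k$ is only an auxiliary variable defined via~\eqref{defwn} and line~\ref{lineCoordEnd} of the algorithm as $w_n^k=-\sum_{i=1}^{n-1}G_i^*w_i^k$. Substituting this identity at iterations $k$ and $l(n,k)$ gives
$$
w_n^{l(n,k)}-w_n^k = -\sum_{i=1}^{n-1}G_i^*\bigl(w_i^{l(n,k)}-w_i^k\bigr),
$$
and since each $G_i^*$ is bounded and the same telescoping argument (now with the shift $k-l(n,k)<M+D$) shows $\|w_i^{l(n,k)}-w_i^k\|\to 0$ for every $i\in\{1,\ldots,n-1\}$, we conclude $\|w_n^{l(n,k)}-w_n^k\|\to 0$ as well. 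There is no real obstacle here: the whole lemma is a bookkeeping consequence of Fej\'er monotonicity plus the uniform delay bound, with the only subtle point being to treat $w_n$ separately through the coordination identity.
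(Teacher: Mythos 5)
Your proposal is correct and follows essentially the same route as the paper: Fej\'er-type vanishing of successive differences from Lemma~\ref{propFejer}(2) via Lemma~\ref{lemIsProject}, a telescoping sum over the at most $M+D$ intervening steps bounded by Lemma~\ref{LemBoundOnl}, and a separate treatment of $w_n$ through the identity $w_n^k=-\sum_{i=1}^{n-1}G_i^*w_i^k$ together with the boundedness of the $G_i^*$. The paper simply delegates the $z^k$ and $w_i^k$ ($i\leq n-1$) cases to a citation of an earlier lemma whose argument is the telescoping you spell out, and handles $w_n$ exactly as you do.
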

\begin{proof}
	For  $z^k$ and $w_i^k$ for $i=1,\ldots,n-1$, the proof is identical to the
	proof of~\cite[Lemma~9]{eckstein2017simplified}.   
	For $\{w_n^k\}$, we
	have from line~\ref{lineCoordEnd} of the algorithm that
\begin{align*}
\|w_{n}^{l(n,k)} - w_{n}^k\|
&=
\left\|
\sum_{i=1}^{n-1}
G_i^*\left(
w_i^k - w_i^{l(n,k)}
\right) 
\right\|
\\
&\leq
\sum_{i=1}^{n-1}
\|G_i^*\|
\left\|
w_i^k - w_i^{l(n,k)}
\right\|.
 \\
 &=
 \sum_{i=1}^{n-1}
 \|G_i^*\|
 \left\|
 \sum_{j=1}^{k-l(n,k)}
 \left(
 w_i^{k-j+1} - w_i^{k-j}
 \right)
 \right\|
 \\
 &\leq
 \sum_{i=1}^{n-1}
 \|G_i^*\|
 \sum_{j=1}^{k-l(n,k)}
 \left\|
 w_i^{k-j+1} - w_i^{k-j}
 \right\|
 \\
 &\leq
 \sum_{i=1}^{n-1}
 \|G_i^*\|
 \sum_{j=1}^{M+D}
 \left\|
 w_i^{k-j+1} - w_i^{k-j}
 \right\|,
\end{align*}
where final line uses Lemma \ref{LemBoundOnl}.
%	Now it is trivial to extend the argument of Lemma 9 in \cite{eckstein2017simplified}  to say that $w_i^k-w_i^{l(j,k)}\to 0$ for all $i=1,\ldots,n-1$, and $j=1,\ldots,n$. 
%Therefore $w_i^k-w_i^{l(n,k)}\to 0$ for $i=1,\ldots,n-1$and 
Since  the operators $G_i$ are bounded and Lemma~\ref{propFejer}(2) implies
that $w_i^{k+1}-w_i^k \to 0$ for all $i=1,\ldots,n-1$, we conclude that
$w_n^{l(n,k)}-w_n^k\to 0$.
\ourqed 
\end{proof} 
\noindent Next, we define
\begin{align}
(\forall i=1,\ldots,n) \quad
\phi_{ik} &\triangleq \langle G_i z^k - x^k_i,y_i^k - w_i^k\rangle 
&
\phi_k &\triangleq \textstyle{\sum_{i=1}^{n}\phi_{ik}}
\label{defPhi}\\\label{defPsi}
(\forall i=1,\ldots,n) \quad
\psi_{ik} &\triangleq \langle G_i z^{l(i,k)} - x^k_i,y_i^k - w^{l(i,k)}_i\rangle
&
\psi_k &\triangleq \textstyle{\sum_{i=1}^{n}\psi_{ik}}.
\end{align}
% The quantity $\psi_k$ is easier to work with than $\phi_k$, and we will show
% that it converges to $\varphi_k$ in the limit. 
Note that~\eqref{defPhi} simply
expands the definition of the affine function in (\ref{hplane}) and we may write $\varphi_k(p^k) = \phi_k$.
\begin{lemma}\label{LemPsiAndPhi}
	Suppose assumptions~\ref{assMono} and \ref{assAsync} hold and $\pi_k>0$ for all $k\geq M$. Then
	$\phi_{ik}-\psi_{ik}\to 0$ for all $i=1,\ldots,n$.
\end{lemma}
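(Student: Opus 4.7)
\textbf{Proof plan for Lemma~\ref{LemPsiAndPhi}.}  The plan is to expand $\phi_{ik}-\psi_{ik}$ into two inner products, each of which is a product of a ``difference'' factor that Lemma~\ref{lemDif20} has already driven to zero and a ``reference'' factor that I will show is bounded.  Concretely, writing $u = G_i z^k - x_i^k$, $u' = G_i z^{l(i,k)} - x_i^k$, $v = y_i^k - w_i^k$, $v' = y_i^k - w_i^{l(i,k)}$, the algebraic identity $\langle u,v\rangle - \langle u',v'\rangle = \langle u - u', v\rangle + \langle u', v - v'\rangle$ together with $u - u' = G_i(z^k - z^{l(i,k)})$ and $v - v' = w_i^{l(i,k)} - w_i^k$ gives
\begin{equation*}
\phi_{ik} - \psi_{ik}
=
\langle G_i(z^k - z^{l(i,k)}),\, y_i^k - w_i^k\rangle
+
\langle G_i z^{l(i,k)} - x_i^k,\, w_i^{l(i,k)} - w_i^k\rangle.
\end{equation*}
Applying Cauchy–Schwarz and boundedness of $G_i$ reduces the claim to showing that $\|z^k - z^{l(i,k)}\|\cdot\|y_i^k - w_i^k\|\to 0$ and $\|G_i z^{l(i,k)} - x_i^k\|\cdot\|w_i^{l(i,k)} - w_i^k\|\to 0$.

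Lemma~\ref{lemDif20} already supplies $z^k - z^{l(i,k)}\to 0$ and $w_i^{l(i,k)} - w_i^k\to 0$, so it remains to show that $\{x_i^k\}$ and $\{y_i^k\}$ are bounded for each $i$.  Boundedness of $\{z^k\}$ and $\{w_i^k\}$ (hence also $\{w_n^k\}$ via line~\ref{lineCoordEnd}) follows from Lemma~\ref{propFejer}(1) applied through Lemma~\ref{lemIsProject}, so the outdated iterates $z^{l(i,k)}, w_i^{l(i,k)}$ lie in bounded sets as well.  For $i\in\Iforw$, formula~\eqref{eqXForw} expresses $x_i^k$ as an affine combination of $G_i z^{l(i,k)}$, $T_i G_i z^{l(i,k)}$ and $w_i^{l(i,k)}$; since $T_i$ is Lipschitz and thus bounded on bounded sets, and the stepsizes $\rho_i^{l(i,k)}$ are assumed to be bounded, $\{x_i^k\}$ is bounded, and then $\{y_i^k\} = \{T_i x_i^k\}$ is bounded as well.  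For $i\in\Iback$, I invoke the nonexpansiveness of $\prox_{\rho T_i}$: writing $a^k = G_i z^{l(i,k)} + \rho_i^{l(i,k)} w_i^{l(i,k)} + e_i^{s(i,k)}$ and choosing a fixed $(z^*,\bw^*)\in\calS$, the identity $\prox_{\rho T_i}(G_i z^* + \rho w_i^*) = G_i z^*$ combined with nonexpansiveness gives $\|x_i^k - G_i z^*\|\leq \|a^k - (G_i z^* + \rho_i^{l(i,k)} w_i^*)\|$, which is bounded once one invokes boundedness of $\rho_i^{l(i,k)}$ and of the errors $\{e_i^{s(i,k)}\}$ (coming from Assumption~\ref{assErr}); boundedness of $y_i^k = (\rho_i^{l(i,k)})^{-1}(a^k - x_i^k)$ then follows from~\eqref{eqBack}.

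Once both factors in each of the two summands are respectively bounded and null, the product goes to zero and the lemma follows.  The main subtlety is the backward-step case: one must justify that $\{x_i^k\},\{y_i^k\}$ remain bounded even though $x_i^k$ is defined implicitly by the inclusion $y_i^k\in T_i x_i^k$ and even though the input $a^k$ depends on outdated iterates; the nonexpansiveness-of-$\prox$ argument above, anchored at a point of $\calS$, is the standard device for handling this and is the step that requires us to explicitly use boundedness of $\{\rho_i^k\}$ and of the error sequence.  Everything else is a routine Cauchy–Schwarz decomposition chained to Lemma~\ref{lemDif20}.
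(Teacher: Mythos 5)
Your proof is correct and is essentially the argument the paper imports by reference: the paper's own ``proof'' of Lemma~\ref{LemPsiAndPhi} just points to Lemma~\ref{lemDif20} plus \cite[Lemma~12]{eckstein2017simplified}, and the standard argument there is exactly your decomposition of $\phi_{ik}-\psi_{ik}$ into two inner products, Cauchy--Schwarz, and boundedness of the remaining factors (your boundedness arguments for $\{x_i^k\}$, $\{y_i^k\}$ also mirror what the paper does later inside the proof of Lemma~\ref{lemBoundedGrad}). The only caveat worth recording is that the boundedness of $\{x_i^k\},\{y_i^k\}$ --- in particular the nonexpansiveness-of-$\prox$ argument for $i\in\Iback$ and the stepsize bounds for $i\in\Iforw$ --- genuinely requires Assumptions~\ref{assErr} and~\ref{assStep}, which the lemma's hypothesis list (only Assumptions~\ref{assMono} and~\ref{assAsync}) omits; this is a gap in the paper's statement rather than in your proof, and it is harmless where the lemma is actually invoked, since Theorem~\ref{ThmAsync} assumes the full set of Assumptions~\ref{AssMonoProb}--\ref{assStep}.
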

\begin{proof}
In view of Lemma~\ref{lemDif20}, we may follow the same argument
as given in \cite[Lemma~12]{eckstein2017simplified}.
\ourqed 
\end{proof}

\subsection{Three Technical Lemmas}
We now prove three technical lemmas which pave the way to establishing weak
convergence of Algorithm~\ref{AlgfullyAsync} to a solution
of~\eqref{probCompMonoMany}. The first lemma upper bounds the norm of the
gradient of $\varphi_k$ at each iteration.

\begin{lemma}\label{boundedGradient}\label{lemBoundedGrad}
	Suppose assumptions~\ref{assMono}-\ref{assErr} hold. Suppose that $\pi_k>0$ for all $k\geq M$. Recall the
	affine function $\varphi_k$ defined in~\eqref{hplane}.  There exists $\xi_1\geq 0$
	such that
$
		\|\nabla\varphi_k\|_\gamma^2
		\leq
\xi_1
$
for all $k \geq 1$. 
\end{lemma}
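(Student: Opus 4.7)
The plan is to bound each component of $\nabla\varphi_k$ using the explicit formula in Lemma~\ref{LemGradAffine}(\ref{item:gradForm}), which, after writing the $\gamma$-norm out, gives
\begin{equation*}
\|\nabla\varphi_k\|_\gamma^2 = \gamma^{-1}\Bigl\|\textstyle\sum_{i=1}^{n-1} G_i^* y_i^k + y_n^k\Bigr\|^2 + \sum_{i=1}^{n-1}\|x_i^k - G_i x_n^k\|^2.
\end{equation*}
Since each $G_i$ is a bounded linear operator, together with Lemma~\ref{lemBasic}, it suffices to show that $\{x_i^k\}$ and $\{y_i^k\}$ are bounded sequences for every $i=1,\ldots,n$. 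For $k<M$ these sequences consist of only finitely many points (the initial data and a few updates), so I will focus on $k\geq M$ and apply the update identities \eqref{eqBack}--\eqref{eqXForw} from Lemma~\ref{lemUpdates}.

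Boundedness of the ``input data'' follows from the Fej\'er-type machinery already set up. By Lemma~\ref{lemIsProject} and Lemma~\ref{propFejer}(1), $\{p^k\} = \{(z^k,w_1^k,\ldots,w_{n-1}^k)\}$ is bounded, and thus $\{w_n^k\}$ is bounded via line~\ref{lineCoordEnd}. Lemma~\ref{lemDif20} then implies $\{z^{l(i,k)}\}$ and $\{w_i^{l(i,k)}\}$ are bounded for all $i$. Assumption~\ref{assStep} bounds $\rho_i^{l(i,k)}$ above (by $\overline\rho$ for $i\in\Iback$, and for $i\in\Iforw$ by any constant strictly exceeding $\limsup_k\rho_i^k$, together with finitely many initial values), and Assumption~\ref{assErr} gives boundedness of $\{e_i^{s(i,k)}\}$ for $i\in\Iback$.

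For $i\in\Iforw$, Lipschitz continuity gives $\|T_i G_i z^{l(i,k)}\|\leq L_i\|G_i\|\|z^{l(i,k)}\| + \|T_i 0\|$, which is bounded; \eqref{eqXForw} then yields boundedness of $\{x_i^k\}$, and a second use of Lipschitz continuity bounds $\{y_i^k\} = \{T_i x_i^k\}$. For $i\in\Iback$, let $a_i^k$ denote the right-hand side of \eqref{eqBack}, a bounded sequence. Pick any Kuhn-Tucker point $(z^*,w_1^*,\ldots,w_{n-1}^*)\in\calS$, which exists by Assumption~\ref{assMono}(6); set $w_n^* \triangleq -\sum_{j=1}^{n-1}G_j^* w_j^*\in T_n z^*$ and recall $G_n = I$, so that $w_i^*\in T_i(G_i z^*)$ uniformly for $i=1,\ldots,n$. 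Monotonicity of $T_i$ between the pairs $(x_i^k,y_i^k)$ and $(G_i z^*, w_i^*)$, combined with $y_i^k = (a_i^k - x_i^k)/\rho_i^{l(i,k)}$ from \eqref{eqBack}, yields
\begin{equation*}
0 \;\leq\; \langle x_i^k - G_i z^*,\; y_i^k - w_i^*\rangle
\;=\; \tfrac{1}{\rho_i^{l(i,k)}}\langle x_i^k - G_i z^*,\; a_i^k - \rho_i^{l(i,k)} w_i^* - x_i^k\rangle,
\end{equation*}
which rearranges via Cauchy-Schwarz to $\|x_i^k - G_i z^*\|\leq \|a_i^k - \rho_i^{l(i,k)} w_i^* - G_i z^*\|$. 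The right side is bounded, so $\{x_i^k\}$ is bounded, and then $\{y_i^k\}$ is bounded using $\rho_i^{l(i,k)}\geq\underline\rho > 0$.

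The main obstacle is the backward case: one cannot evaluate $T_i$ directly at an arbitrary point (since $T_i$ is only maximal monotone), and one must instead use monotonicity against a Kuhn-Tucker point to indirectly control $x_i^k$. A minor subtlety is uniformly handling $i=n$ by using $w_n^* = -\sum_{j<n} G_j^* w_j^*$ as the appropriate dual variable at $z^*$. Once $\{x_i^k\}$ and $\{y_i^k\}$ are bounded for all $i$, the displayed formula for $\|\nabla\varphi_k\|_\gamma^2$ combined with boundedness of $G_i$ produces the uniform constant $\xi_1$.
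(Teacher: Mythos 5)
Your proof is correct, and its overall skeleton matches the paper's: expand $\|\nabla\varphi_k\|_\gamma^2$ via Lemma~\ref{LemGradAffine}, handle $k<M$ trivially, get boundedness of $\{z^{l(i,k)}\}$ and $\{w_i^{l(i,k)}\}$ from the Fej\'er machinery, and bound $\{x_i^k\}$ for $i\in\Iforw$ by Lipschitz continuity applied to the forward update. You diverge from the paper in two places, both to your credit. First, for $i\in\Iback$ the paper simply asserts that boundedness of $\{x_i^k\}$ ``follows from exactly the same argument as in \cite[Lemma 10]{eckstein2017simplified}''; you instead supply a self-contained argument, pairing $(x_i^k,y_i^k)$ against a Kuhn--Tucker point $(G_iz^*,w_i^*)$ and using monotonicity plus the resolvent identity to get $\|x_i^k-G_iz^*\|\leq\|a_i^k-\rho_i^{l(i,k)}w_i^*-G_iz^*\|$ --- this is a clean and correct way to fill the outsourced step, and your handling of $i=n$ via $w_n^*=-\sum_{j<n}G_j^*w_j^*\in T_nz^*$ is exactly the right bookkeeping. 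Second, for the term $\gamma^{-1}\|\sum_iG_i^*y_i^k\|^2$ the paper substitutes the rearranged update equations \eqref{eqRearrange1}--\eqref{eqRearrange2} and runs a longer chain of Lemma~\ref{lemBasic} estimates, whereas you bound each $\{y_i^k\}$ directly (from $y_i^k=(a_i^k-x_i^k)/\rho_i^{l(i,k)}$ with $\rho_i^{l(i,k)}\geq\underline{\rho}$ for $i\in\Iback$, and from Lipschitz continuity for $i\in\Iforw$) and then apply Lemma~\ref{lemBasic} once; this is simpler and equally rigorous. One cosmetic remark: you do not actually need Lemma~\ref{lemDif20} to get boundedness of $\{z^{l(i,k)}\}$ and $\{w_i^{l(i,k)}\}$ --- since $l(i,k)\leq k$, these are just reindexings of the already-bounded sequences from Lemma~\ref{propFejer}(1) --- but invoking it does no harm.
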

\begin{proof}
	For $k< M$ the gradient can be trivially bounded by $\max_{1\leq k<  M}
	\|\nabla\varphi_k\|_\gamma^2$.  Now fix any $k \geq M$.  Using
	Lemma~\ref{LemGradAffine},
	\begin{eqnarray}
		\|\nabla \varphi_k\|_\gamma^2
		=
	\gamma^{-1}\left\|\sum_{i=1}^{n-1} G_i^* y^k_i+y_{n}^k\right\|^2
	+
		\sum_{i=1}^{n-1}
		\|x^k_i - G_i x_{n}^k\|^2
    \label{eqGrad}.
	\end{eqnarray}
	Using Lemma~\ref{lemBasic}, we begin by writing the second term on the
	right of~\eqref{eqGrad} as
	\begin{align*}
				\sum_{i=1}^{n-1}
		\|x^k_i - G_i x_{n}^k\|^2
		&\leq
		2\sum_{i=1}^{n-1}
		\left(
		\|x_i^k\|^2 + \|G_i\|^2\|x_{n}^k\|^2
		\right)
		\\
		&\leq
2\sum_{i=1}^{n-1}\|x_i^k\|^2
+
2(n-1)\max_i \left\{\|G_i\|^2\right\}\|x_n^k\|^2.
	\end{align*}
The linear operators $G_i$ are bounded by Assumption \ref{assMono}. We now
check the boundedness of sequences $\{x_i^k\}$, $i=1,\dots,n$. For
$i\in\Iback$, the boundedness of $\{x_i^k\}$ follows from exactly the same argument as
in~\cite[Lemma 10]{eckstein2017simplified}. Now taking any $i\in\Iforw$, we use the
triangle inequality and Lemma~\ref{lemUpdates} to obtain
\begin{align*}
\|x_{i}^k\| 
&\leq
\|G_{i} z^{l(i,k)} - \rho_{i}^{l(i,k)} T_iG_{i} z^{l(i,k)}\|
+
\rho^{l(i,k)}_{i}\|w_{i}^{l(i,k)}\|
\\
&\leq
\|G_{i}\|\|z^{l(i,k)}\|+\rho^{l(i,k)}_{i}\|T_i G_{i}z^{l(i,k)}\|
+\rho^{l(i,k)}_{i}\|w_{i}^{l(i,k)}\|.
\end{align*}
Now the sequences $\{\|z^k\|\}$ and
 $\{\|w_i^k\|\}$ are bounded by Lemma \ref{propFejer}, implying the
 boundedness of $\{\|z^{l(i,k)}\|\}$ and $\{\|w_i^{l(i,k)}\|\}$. Since
 $\{z^{l(i,k)}\}$ is bounded, $G_{i}$ is bounded, and $T_i$ is Lipschitz
 continuous, $\{T_iG_{i}z^{l(i,k)}\}$ is bounded. Finally, the stepsizes
 $\rho_i^k$ are bounded by Assumption~\ref{assStep}. Therefore, $\{x_{i}^k\}$
 is bounded for $i\in\Iforw$, and we may conclude that the second term
 in~\eqref{eqGrad} is bounded.

We next consider the first term in~\eqref{eqGrad}. Rearranging the update
equations for Algorithm~\ref{AlgfullyAsync} as given in Lemma
\ref{lemUpdates}, we may write
\pagebreak[3]
	\begin{align}\label{eqRearrange1}
		y_i^k &= \left(\rho_{i}^{l(i,k)}\right)^{-1}
		\left(G_i z^{l(i,k)} - x_i^k+\rho_{i}^{l(i,k)}w_i^{l(i,k)}+e_i^{s(i,k)}\right),
		& i &\in\Iback
		\\\label{eqRearrange2}
		T_i G_{i} z^{l(i,k)} &=
		\left(\rho_{i}^{l(i,k)}\right)^{-1}
		\left(G_i z^{l(i,k)} - x_i^k+\rho_{i}^{l(i,k)}w_i^{l(i,k)}\right),
		& i&\in\Iforw.
	\end{align}
	Using $G_n = I$, the squared norm in the first term of~\eqref{eqGrad} may be written as
	% Substituting (\ref{eqRearrange1})--(\ref{eqRearrange2}) into the first term of (\ref{eqGrad}) and ignoring the scalar $\gamma^{-1}$ yields
	\begin{align}
    \left\|\sum_{i=1}^{n} G_i^* y^k_i\right\|^2
	&=
	\left\|\sum_{i\in\Iback} G_i^* y^k_i + \sum_{i\in\Iforw} G_{i}^* \left(T_i G_{i}z^{l(i,k)} + y^k_{i} - T_i G_{i}z^{l(i,k)}\right)\right\|^2
	\nonumber\\\nonumber
	&\overset{\text{(a)}}{\leq}
	2\left\|\sum_{i\in\Iback} G_i^* y^k_i + \sum_{i\in\Iforw} G_{i}^* T_i G_{i} z^{l(i,k)}\right\|^2
	\\\nonumber 
	&\qquad 
	+
	2\left\|\sum_{i\in\Iforw} G_{i}^*\left(y^k_{i} - T_iG_{i} z^{l(i,k)}\right)\right\|^2
	\\
	&\overset{\text{(b)}}{\leq}
	4\left\|\sum_{i=1}^n \big(\rho_i^{l(i,k)}\big)^{-1} G_i^* 
	\left( G_i z^{l(i,k)} - x_i^k+\rho_i^{l(i,k)}w_i^{l(i,k)}
	\right)\right\|^2
	\nonumber\\&\qquad
	+\;2|\Iforw|\sum_{i\in\Iforw} \|G_{i}\|^2
	\left\|
	T_i x^k_{i} - T_iG_{i} z^{l(i,k)}\right\|^2	
	\nonumber\\&\qquad 
	+
	4\left\|\sum_{i\in\Iback}\big(\rho_i^{l(i,k)}\big)^{-1} G_i^*e_i^{s(i,k)}\right\|^2
	\label{eqEndBounded}\\
	&\overset{\text{(c)}}{\leq}
	4n\underline{\rho}^{-2}\max_i\big\{\|G_i\|\big\}^2 
	\left(
	\sum_{i=1}^n \left\| 
	 G_i z^{l(i,k)} - x_i^k+\rho_i^{l(i,k)}w_i^{l(i,k)}
	\right\|^2
	\right. 
	\nonumber\\&\qquad\qquad\qquad\qquad\qquad\qquad  
	\left. 
	+
	\sum_{i\in\Iback}
	\|e_i^{s(i,k)}\|^2
	\right)
	\nonumber\\&\qquad
	+\;
	2|\Iforw|\sum_{i\in\Iforw} \|G_{i}\|^2 L_i^2\|x_{i}^k-G_{i}z^{l(i,k)}\|^2 	
	\label{lastBound1}
	\end{align}
	In the above, (a) uses Lemma~\ref{lemBasic}, while (b) is obtained by substituting 
	\eqref{eqRearrange1}-\eqref{eqRearrange2} into the first squared norm
	and using $y_i^k =T_i x_i^k$ for $i\in\Iforw$ in the second, and then using Lemma~\ref{lemBasic}
	on both terms.  Finally, (c) uses Lemma~\ref{lemBasic}, the Lipschitz
	continuity of $T_i$, and Assumption \ref{assStep}.  For each
	$i=1,\ldots,n$, we have that $G_i$ is a bounded operator, the sequences
	$\{z^{l(i,k)}\}$, $\{x_i^k\}$, and $\{w_i^{l(i,k)}\}$ are already known to
	be bounded,
	$\{\rho_i^{l(i,k)}\}$ is bounded by Assumption~\ref{assStep}, and for $i\in\Iback$, $\{e_i^{s(i,k)}\}$ is bounded by Asssumption~\ref{assErr}. We conclude
	that the right hand side of~\eqref{lastBound1} is bounded. Therefore, the
	first term in~\eqref{eqGrad} is bounded and the sequence
	$\{\nabla\varphi_k\}$ must be bounded.
	\ourqed 
\end{proof}	

The second technical lemma establishes a lower bound for the affine
function $\varphi_k$ evaluated at the current point \col{which is similar to \eqref{eqSimpleLow}. This shows that the cut provided by the hyperplane is ``deep enough"} to guarantee weak convergence of the method. 
% Thus, the separating hyperplane provides
% a sufficiently deep cut between the current point and the solution set. 
The lower bound applies to the quantity $\psi_k$ defined in \eqref{defPsi}:
this quantity is easier to analyze than $\phi_k$ and Lemma~\ref{LemPsiAndPhi}
asserts that the difference between the two converges to zero.

\begin{lemma}
\label{PositivePhi}	\label{lemLowerBoundPhi}
    % JE -- don't capitalize "assumption", "lemma", etc. when plural
	Suppose that assumptions~\ref{AssMonoProb}-\ref{assErr} hold. Suppose $\pi_k>0$ for all $k\geq M$.  
	Then there exists $\xi_2>0$ such that  
\begin{equation}\nonumber
	\limsup_{k\to \infty}\psi_k
	\geq
	\xi_2
	\limsup_{k\to \infty}\sum_{i=1}^n
	\| G_i z^{l(i,k)}-x^k_i\|^2
.
\end{equation}
\end{lemma}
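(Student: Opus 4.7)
The natural approach is to bound each $\psi_{ik}$ from below by a positive multiple of $\|G_i z^{l(i,k)}-x_i^k\|^2$, treating the backward indices $i\in\Iback$ and the forward indices $i\in\Iforw$ separately, then to sum over $i$ and pass to the $\limsup$.

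For $i\in\Iback$, the backward identity in \eqref{eqBack} of Lemma~\ref{lemUpdates} can be rearranged to read
\[
y_i^k-w_i^{l(i,k)} \;=\; \bigl(\rho_i^{l(i,k)}\bigr)^{-1}\!\Bigl(G_i z^{l(i,k)}-x_i^k+e_i^{s(i,k)}\Bigr).
\]
Substituting into $\psi_{ik}=\langle G_i z^{l(i,k)}-x_i^k,\,y_i^k-w_i^{l(i,k)}\rangle$ and applying the error bound~\eqref{err1} from Assumption~\ref{assErr} yields
\[
\psi_{ik} \;\geq\; (1-\sigma)\bigl(\rho_i^{l(i,k)}\bigr)^{-1}\bigl\|G_i z^{l(i,k)}-x_i^k\bigr\|^2
       \;\geq\; \frac{1-\sigma}{\overline{\rho}}\bigl\|G_i z^{l(i,k)}-x_i^k\bigr\|^2,
\]
where the last inequality uses the uniform upper bound $\overline{\rho}$ on $\{\rho_i^k\}_{i\in\Iback}$ from Assumption~\ref{assStep}.

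For $i\in\Iforw$, I repeat the derivation~\eqref{fstep1}--\eqref{fstep3} from the introduction, but with $z^k$ replaced by $G_i z^{l(i,k)}$, $w_i^k$ by $w_i^{l(i,k)}$, and $\rho$ by $\rho_i^{l(i,k)}$. The forward update~\eqref{eqXForw} gives $T_i G_i z^{l(i,k)}-w_i^{l(i,k)}=(\rho_i^{l(i,k)})^{-1}(G_i z^{l(i,k)}-x_i^k)$ and $y_i^k=T_i x_i^k$, so adding and subtracting $T_iG_iz^{l(i,k)}$ inside the inner product, applying Cauchy-Schwarz, and invoking $L_i$-Lipschitz continuity of $T_i$ produces
\[
\psi_{ik} \;\geq\; \Bigl(\tfrac{1}{\rho_i^{l(i,k)}}-L_i\Bigr)\bigl\|G_i z^{l(i,k)}-x_i^k\bigr\|^2.
\]

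The main technical point is showing that the coefficient on the right stays bounded below by a strictly positive constant for all sufficiently large $k$. By Assumption~\ref{assStep} we have $\limsup_k\rho_i^k<1/L_i$, so there exist $\Delta_i>0$ and $K_i$ with $\rho_i^k\leq 1/(L_i+\Delta_i)$ for all $k\geq K_i$. By Lemma~\ref{LemBoundOnl}, $l(i,k)\geq k-(M+D)$, so for $k\geq K_i+M+D$ we get $1/\rho_i^{l(i,k)}-L_i\geq\Delta_i$. Hence $\psi_{ik}\geq\Delta_i\|G_iz^{l(i,k)}-x_i^k\|^2$ for all such $k$.

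Combining the two cases, for $k$ large enough,
\[
\psi_k \;\geq\; \xi_2\sum_{i=1}^n\bigl\|G_iz^{l(i,k)}-x_i^k\bigr\|^2,
\qquad
\xi_2 \;\triangleq\; \min\!\Bigl\{\tfrac{1-\sigma}{\overline{\rho}},\;\min_{i\in\Iforw}\Delta_i\Bigr\}>0.
\]
Taking $\limsup$ on both sides (which preserves the inequality since it holds for all large $k$) gives the claim. The only subtle step is the uniform positivity of $1/\rho_i^{l(i,k)}-L_i$ for $i\in\Iforw$; this is exactly what Assumption~\ref{assStep} combined with the delay bound from Lemma~\ref{LemBoundOnl} delivers.
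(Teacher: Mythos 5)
Your proposal is correct and follows essentially the same route as the paper's proof: the same split of $\psi_k$ into $\Iback$ and $\Iforw$ terms, the same substitution of the backward identity with the error bound \eqref{err1}, and the same two-forward-step manipulation with Lipschitz continuity yielding the coefficient $1/\rho_i^{l(i,k)}-L_i$. The only cosmetic difference is at the end, where the paper invokes $\limsup_k (a_k b_k)\geq(\liminf_k a_k)(\limsup_k b_k)$ while you convert the $\limsup$ stepsize condition of Assumption~\ref{assStep} into an eventual pointwise bound via $K_i$, $\Delta_i$, and Lemma~\ref{LemBoundOnl}; both are valid and give the same conclusion.
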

\begin{proof}
For $k\geq  M$, we have
\begin{align}
	\psi_k
	&=
\sum_{i=1}^n
	\left\langle G_i z^{l(i,k)}-x^k_i,y_i^k-w^{l(i,k)}_i\right\rangle
	\nonumber\\ 
	&\overset{\text{(a)}}{=}
	\sum_{i\in\Iback}
	\left\langle G_i z^{l(i,k)}-x^k_i,
	             \big(\rho_i^{l(i,k)}\big)^{-1}\left(G_i z^{l(i,k)}-x^k_i+e_i^{s(i,k)}\right)
	\right\rangle
	\nonumber\\
	&\qquad
	+
	\sum_{i\in\Iforw}
	\left\langle G_{i}z^{l(i,k)}-x^k_{i},T_i G_{i}z^{l(i,k)}-w^{l(i,k)}_{i}
	\right\rangle
	\nonumber\\
	&\qquad
	+
	\sum_{i\in\Iforw}
	\left\langle G_{i}z^{l(i,k)}-x^k_{i},y^k_{i}-T_i G_{i}z^{l(i,k)}
	\right\rangle
	\nonumber\\
	&\overset{\text{(b)}}{=}
	\sum_{i\in\Iback}\left[\big(\rho^{l(i,k)}_i\big)^{-1}\|G_i z^{l(i,k)}-x^k_i\|^2 
	   +\big(\rho^{l(i,k)}_i\big)^{-1}
	\left\langle G_i z^{l(i,k)}-x^k_i,e_i^{s(i,k)}\right\rangle\right]
	\nonumber\\
	&\qquad
	+
	\sum_{i\in\Iforw}
	\left\langle G_{i} z^{l(i,k)}-x^k_{i},
	\big(\rho^{l(i,k)}_{i}\big)^{-1}
	\left(G_{i}z^{l(i,k)}-x^k_{i}\right)\right\rangle 	
	\nonumber\\
	&\qquad
	-
	\sum_{i\in\Iforw}
	\left\langle G_{i}z^{l(i,k)}-x^k_{i},T_i G_{i}z^{l(i,k)} 
	   - T_i x^k_{i}\right\rangle
    \nonumber\\
    &\overset{\text{(c)}}{\geq}
	(1-\sigma)\sum_{i\in\Iback}\big(\rho^{l(i,k)}_i\big)^{-1}\|G_i z^{l(i,k)}-x^k_i\|^2
	\nonumber\\
	&\qquad
	+
	\sum_{i\in\Iforw}
	\left(
	\big(\rho^{l(i,k)}_{i}\big)^{-1} - L_i 
	\right)
	\|G_{i} z^{l(i,k)}-x^k_{i}\|^2.  \label{eqForLineS}
\end{align}
%	\label{eqForLineS1}
In the above derivation, (a) follows by substitution of~\eqref{eqBack} into
the $\Iback$ terms and algebraic manipulation of the $\Iforw$ terms.  Next,
(b) follows by algebraic manipulation of the $\Iback$ terms and
substitution of~\eqref{eqXForw} into the $\Iforw$ terms.
Finally, (c) is justified by using~\eqref{err1}
in Assumption~\ref{assErr} and the Lipschitz
continuity of $T_i$ for $i\in\Iforw$.

Now consider any two sequences $\{a_k\}\subset\real, \{b_k\} \subset \real_+$.  We note that
\begin{eqnarray*}
\limsup_{k\to\infty} a_k b_k
\geq
\limsup_{k\to\infty}\left\{
\left(
\liminf_{k\to\infty} a_k
\right)
b_k
\right\}
=
\left(\liminf_{k\to\infty} a_k\right)
\left(\limsup_{k\to\infty}
b_k \right).
\end{eqnarray*}
Applying this fact to the expression
in~\eqref{eqForLineS} yields the desired result with
\begin{eqnarray}\label{defXi2}\nonumber
\xi_2 = 
\min
\left\{
(1-\sigma)\overline{\rho}^{-1}
,\;
\min_{j\in\Iforw}
\left\{
\overline{\rho}_{j}^{-1} - L_j 
\right\}
\right\},
\end{eqnarray}
and Assumption~\ref{assStep} guarantees that $\xi_2>0$. 
\ourqed	
\end{proof}
\noindent In the third technical lemma, we provide what is essentially a complementary  
lower bound for $\psi_k$: 
\begin{lemma}
\label{lemLBPhi2}
Suppose assumptions~\ref{AssMonoProb}-\ref{assErr} hold. Suppose $\pi_k>0$ for all $k\geq M$. Then there exists
$\xi_3>0$ such that
\begin{multline}
\limsup_{k\to\infty}\left(
\psi_k 
+ \sum_{i\in\Iforw}L_i \|G_{i} z^{l(i,k)}-x_i^k\|^2
\right)
\\
\geq
\xi_3\limsup_{k\to\infty}
\left(
\sum_{i\in\Iback} \|y_i^k - w_i^{l(i,k)}\|^2
+
\sum_{i\in\Iforw}
\|T_i G_{i} z^{l(i,k)} - w_i^{l(i,k)}\|^2
\right)
.  \label{eqPhiLB2}
\end{multline}
\end{lemma}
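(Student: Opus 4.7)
The plan is to prove a pointwise inequality at each iteration $k \geq M$ and then pass to the limsup. Writing $\psi_k = \sum_{i \in \Iback}\psi_{ik} + \sum_{i \in \Iforw}\psi_{ik}$ and using the explicit update formulas from Lemma~\ref{lemUpdates}, I would handle the backward and forward indices separately.

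For $i \in \Iback$, rearranging~\eqref{eqBack} gives $G_i z^{l(i,k)} - x_i^k = \rho_i^{l(i,k)}(y_i^k - w_i^{l(i,k)}) - e_i^{s(i,k)}$. Substituting into the definition~\eqref{defPsi} of $\psi_{ik}$ and applying the error bound~\eqref{err2} of Assumption~\ref{assErr} yields
\[
\psi_{ik} \;=\; \rho_i^{l(i,k)}\|y_i^k-w_i^{l(i,k)}\|^2 - \langle e_i^{s(i,k)},\, y_i^k-w_i^{l(i,k)}\rangle \;\geq\; (1-\sigma)\,\underline{\rho}\,\|y_i^k - w_i^{l(i,k)}\|^2,
\]
where the last step uses $\rho_i^{l(i,k)} \geq \underline{\rho}$ from Assumption~\ref{assStep}. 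This is essentially the manipulation already carried out in the $\Iback$ part of Lemma~\ref{lemLowerBoundPhi}, recycled to produce a different right-hand side.

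For $i \in \Iforw$, identity~\eqref{eqXForw} gives $G_i z^{l(i,k)} - x_i^k = \rho_i^{l(i,k)}(T_i G_i z^{l(i,k)} - w_i^{l(i,k)})$. Writing $\xi = T_i G_i z^{l(i,k)} - w_i^{l(i,k)}$ and $\eta = T_i x_i^k - w_i^{l(i,k)} = y_i^k - w_i^{l(i,k)}$, so that $\psi_{ik} = \rho_i^{l(i,k)}\langle\xi,\eta\rangle$, I would write $\langle \xi,\eta\rangle = \|\xi\|^2 - \langle \xi, \xi-\eta\rangle$ and bound the second inner product by Cauchy-Schwarz together with the Lipschitz estimate $\|\xi - \eta\| = \|T_i G_i z^{l(i,k)} - T_i x_i^k\| \leq L_i \,\|G_i z^{l(i,k)} - x_i^k\| = L_i\,\rho_i^{l(i,k)}\,\|\xi\|$. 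This yields $\psi_{ik} \geq \rho_i^{l(i,k)}\|\xi\|^2 - L_i(\rho_i^{l(i,k)})^2 \|\xi\|^2$. The key observation is that the slack term $L_i\,\|G_i z^{l(i,k)}-x_i^k\|^2$ appearing on the left of~\eqref{eqPhiLB2} equals exactly $L_i(\rho_i^{l(i,k)})^2\|\xi\|^2$, which cancels the Lipschitz penalty and gives
\[
\psi_{ik} + L_i\,\|G_i z^{l(i,k)}-x_i^k\|^2 \;\geq\; \rho_i^{l(i,k)}\,\|\xi\|^2 \;\geq\; \underline{\rho}\,\|T_i G_i z^{l(i,k)} - w_i^{l(i,k)}\|^2.
\]

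Summing the two groups of per-index bounds produces a pointwise inequality valid for every $k \geq M$ with the uniform constant $\xi_3 := (1-\sigma)\underline{\rho} > 0$, which is positive by Assumptions~\ref{assErr} and~\ref{assStep}; taking the limsup of both sides then yields the stated conclusion. The main obstacle is recognising the correct compensating term on the forward side: once one notices that $L_i\,\|G_i z^{l(i,k)}-x_i^k\|^2$ equals $L_i(\rho_i^{l(i,k)})^2\|\xi\|^2$ exactly, the Lipschitz contribution from Cauchy-Schwarz is neutralised and the forward bound attains the same clean form as the backward one, which explains why~\eqref{eqPhiLB2} is stated with precisely that correction prepended to $\psi_k$.
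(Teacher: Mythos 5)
Your proposal is correct and follows essentially the same route as the paper: substitute the backward identity~\eqref{eqBack} and apply~\eqref{err2} on the $\Iback$ terms, substitute the forward identity~\eqref{eqXForw} and use Cauchy--Schwarz with Lipschitz continuity on the $\Iforw$ terms, observe that the added term $L_i\|G_i z^{l(i,k)}-x_i^k\|^2$ exactly absorbs the Lipschitz penalty, and pass from the resulting pointwise inequality (valid for $k\geq M$) to the limsup with $\xi_3=(1-\sigma)\underline{\rho}$. The only difference is cosmetic: you factor out $\rho_i^{l(i,k)}$ before decomposing the inner product, whereas the paper decomposes $y_i^k-w_i^{l(i,k)}$ first, but the resulting bounds are identical.
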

\begin{proof}
For all $k\geq  M$, we have 
\pagebreak[3]
\begin{align}
\nonumber
\psi_k &= \sum_{i=1}^n\langle G_i z^{l(i,k)}-x^k_i,y^k_i-w^{l(i,k)}_i\rangle \\
&\overset{\text{(a)}}{=}
\sum_{i\in\Iback}
\langle \rho^{l(i,k)}_i(y^k_i - w^{l(i,k)}_i) - e_i^{s(i,k)},y^k_i - w^{l(i,k)}_i\rangle
\nonumber\\&\qquad
+
\sum_{i\in\Iforw}\langle G_{i} z^{l(i,k)}-x^k_{i},T_i G_{i} z^{l(i,k)}-w_{i}^{l(i,k)}\rangle
\nonumber
\\\nonumber 
&\qquad 
+
\sum_{i\in\Iforw}\langle G_{i}z^{l(i,k)}-x^k_{i},y^k_{i}-T_i G_{i} z^{l(i,k)}\rangle
\\\nonumber&\overset{\text{(b)}}{=}
\sum_{i\in\Iback}
\left(
\rho^{l(i,k)}_i\|y^k_i - w^{l(i,k)}_i\|^2 - \langle e_i^{s(i,k)},y^k_i - w^{l(i,k)}_i\rangle
\right)
\\\nonumber
&\qquad
+
\sum_{i\in\Iforw}\langle \rho^{l(i,k)}_{i}(T_iG_{i} z^{l(i,k)}-w_{i}^{l(i,k)}),T_i G_{i} z^{l(i,k)}-w_{i}^{l(i,k)}\rangle
\\\label{eqPassLemFin}
&\qquad
-
\sum_{i\in\Iforw}\langle x^k_{i}-G_{i}z^{l(i,k)},T_i x^k_{i}-T_i G_{i}z^{l(i,k)}\rangle
\\\nonumber
&\overset{\text{(c)}}{\geq}
(1-\sigma)\sum_{i\in\Iback}\rho^{l(i,k)}_i\|y^k_i - w^{l(i,k)}_i\|^2
+
\sum_{i\in\Iforw}\rho^{l(i,k)}_{i}\|T_i G_{i}z^{l(i,k)}-w_{i}^{l(i,k)}\|^2
\\\label{eqFIna}
&\qquad
-\sum_{i\in\Iforw}L_i\|G_{i}z^{l(i,k)}-x^k_{i}\|^2.
\end{align}
In the above derivation, (a) follows by substition of~\eqref{eqBack} into the
$\Iback$ terms and algebraic manipulation of the $\Iforw$ terms.  Next (b) is
obtained by algebraic simplification of the $\Iback$ terms and substitution
of~\eqref{eqXForw} into the two groups of $\Iforw$ terms. Finally, (c) is
obtained by substituting the error criterion~\eqref{err2} from
Assumption~\ref{assErr} for the $\Iback$ terms and using the Lipschitz
continuity of $T_i$ for the $\Iforw$ terms. Adding the last term
in~\eqref{eqFIna} to both sides yields
\begin{multline*}
\psi_k + \sum_{i\in\Iforw}L_i\|G_{i}z^{l(i,k)}-x^k_{i}\|^2 \\
\geq
(1-\sigma)\sum_{i\in\Iback}\rho^{l(i,k)}_i\|y^k_i - w^{l(i,k)}_i\|^2
+
\sum_{i\in\Iforw}\rho^{l(i,k)}_{i}\|T_i G_{i}z^{l(i,k)}-w_{i}^{l(i,k)}\|^2.
\end{multline*}
Assumption~\ref{assErr} requires that $\sigma<1$ and Assumption~\ref{assStep}
requires that $\rho_i^k\geq \underline{\rho}>0$ for all $i$, so taking limits
in the above inequality implies that~\eqref{eqPhiLB2} holds with
$
\xi_3 = 
(1-\sigma)\underline{\rho}.
$
\ourqed 
\end{proof}
%!TEX root = forwProjMP-R1.tex
\col{ 
\subsection{Proof of Theorem \ref{ThmAsync}}\label{SecWeakConv}
We are now in a position to complete the proof. 
}
The assertion regarding termination at line~\ref{lineReturn} follows
	immediately from Lemma~\ref{lemFiniteTerm}.  For the remainder of the
	proof, we therefore consider only the case that the algorithm runs
	indefinitely and thus that  $\pi_k > 0$ for all $k \geq M$.
	
		The proof has three parts. The first part establishes that $G_i
	z^k - x_i^k\to 0$ for all $i$ and the second part proves that $y_i^k -
	w_i^k\to 0$ for all $i$. Finally, the third part uses these results in
	conjunction with a result in~\cite{alotaibi2014solving} to show that any
	convergent subsequence of $\{p^k\} = \{(z^k,{\bf w}^k)\}$ generated by the
	algorithm must converge to a point in $\calS$, after which we may simply
	invoke Lemma~\ref{propFejer}.
	
	\vspace{1ex} \noindent 
	\underline{Part 1. Convergence of $G_i z^k - x_i^k\to 0$}
	
	\vspace{1ex}
	\noindent 
	Lemma~\ref{lemIsProject} and~\eqref{defPhi} imply that 
	\[
	p^{k+1} = p^k - \frac{\beta_k\max\{\varphi_k(p^k),0\}}
	                     {\|\nabla\varphi_k\|_\gamma^2}\nabla\varphi_k
	        = p^k - \frac{\beta_k\max\{\phi_k,0\}}
	                     {\|\nabla\varphi_k\|_\gamma^2}\nabla\varphi_k. 
	\]
	Lemma~\ref{propFejer}(2) guarantees that $p^k - p^{k+1} \to 0$, so it follows that
	\[
	0 = \lim_{k\to\infty}\|p^{k+1}-p^k\|_\gamma 
	  = \lim_{k\to\infty}\frac{\beta_k\max\{\phi_k,0\}}{\|\nabla\varphi_k\|_\gamma}
	  \geq\frac{\underline{\beta}\limsup_{k\to\infty}\max\{\phi_k,0\}}{\sqrt{\xi_1}},
	\]
	since $\|\nabla\varphi_k\|_\gamma\leq \sqrt{\xi_1}<\infty$ for all $k$ by
	Lemma \ref{lemBoundedGrad}.  Thus, $\limsup_{k\to\infty} \phi_k\leq
	0$. Since Lemma \ref{LemPsiAndPhi} implies that  $\phi_k -
	\psi_k\to 0$, it follows that $\limsup_{k\to\infty}\psi_k\leq 0$. With (a)
	following from Lemma~\ref{PositivePhi}, we next obtain
	\begin{align*}
		0\geq \limsup_{k\to\infty} \psi_k
		&\overset{\text{(a)}}{\geq}
		\xi_2\lim\sup_k \sum_{i=1}^{n}\|G_i z^{l(i,k)} - x^k_i\|^2
		\\
		&\geq 
		\xi_2\lim\inf_k \sum_{i=1}^{n}\|G_i z^{l(i,k)} - x^k_i\|^2
		\geq 0.
	\end{align*}
%where (a) uses Lemma \ref{PositivePhi}.
Thus,
	$
	G_i z^{l(i,k)}-x_i^k\to 0
	$
	for $i=1,\ldots,n$.
	Since $z^k - z^{l(i,k)}\to 0$ and $G_i$ is bounded, we obtain that 
	$
    G_iz^k - x_i^k\to 0
	$
	for $i=1,\ldots,n$.
	
	%\vspace{1ex}
	\pagebreak[2]
	\noindent\underline{Part 2. Convergence of $y_i^k - w_i^k\to 0$}
	
	\vspace{1ex}
	\noindent 
From $\limsup_{k\to\infty} \psi_k
\leq 0$ and $G_iz^{l(i,k)} - x_i^k\to 0$, we obtain
\begin{eqnarray}
\limsup_{k\to\infty}
\left\{
\psi_k
+	
\sum_{i\in\Iforw} L_i\|G_{i} z^{l(i,k)}-x^k_{i}\|^2
\right\} \leq 0.\label{eq2use}
\end{eqnarray}
Combining (\ref{eq2use}) with (\ref{eqPhiLB2}) in Lemma \ref{lemLBPhi2}, we infer that 
	\begin{align}
	\text{~}&& (\forall\,i\in\Iback)&&
	y_i^k - w_i^{l(i,k)}&\to 0 &\implies&& 
	y_i^k - w_i^{k}&\to 0 &\text{~}& 
	\nonumber
	%\label{eqUse1} 
	\\ 
	&&(\forall\,i\in\Iforw)&&
	T_i G_{i} z^{l(i,k)}-w_{i}^{l(i,k)}& \to 0 &\implies&& 
	T_i G_{i} z^k-w_{i}^{k}&\to 0. \label{eqUse}
	\end{align}
where the implications follow from Lemma~\ref{lemDif20}, the Lipschitz
continuity of $T_i$ for $i\in\Iforw$, and the continuity of the linear
operators $G_{i}$. Finally, for each $i\in\Iforw$ and $k\geq M$, we further reason that
	\begin{align}
		\|y^k_{i} - w^k_{i}\|
		&=
		\|T_i G_{i}z^k - w^k_{i}+y^k_{i}-T_i G_{i}z^k\|		\nonumber,\\
		&\leq
		\|T_i G_{i}z^k - w^k_{i}\|+\|y^k_{i}-T_i G_{i}z^k\|   \nonumber	\\
		&\overset{\text{(a)}}{=}
		\|T_iG_{i}z^k - w^k_{i}\|+\|T_i x^k_{i}-T_i G_{i}z^k\|
		\nonumber\\
		\nonumber
		%\label{eqUpperTri}
        &\overset{\text{(b)}}{\leq}
		\|T_i G_{i}z^k - w^k_{i}\|+L_i\|G_{i}z^k-x^k_{i}\|
		   \overset{\text{(c)}}{\to} 0.
	\end{align}
Here, (a) uses~\eqref{eqXForw} from Lemma~\ref{lemUpdates}, (b) uses the
Lipschitz continuity of $T_i$, and (c) relies on~\eqref{eqUse} and part 1 of this proof.

	\vspace{1ex} \noindent
	\underline{Part 3. Subsequential convergence}
	
	\vspace{1ex}
	\noindent 
	Consider any increasing sequence of indices $\{q_k\}$ such that
	$(z^{q_k},{\bf w}^{q_k})$ weakly converges to some point
	$(z^\infty,\bw^\infty) \in \bcH$.  We claim that in
	any such situation, $(z^\infty,{\bf w}^\infty)\in \calS$.

	By part 1, $z^k - x^k_{n}\to 0$, so $x_{n}^{q_k}\rightharpoonup z^\infty$.
	For any $i=1,\ldots, n$, part 2 asserts that $y_i^k-w^k_i\to 0$, so 
	$y^{q_k}_i\rightharpoonup w^\infty_i$. Furthermore, 
	part 2,  \eqref{defwn}, and the boundedness of $G_i$ imply that
\[
\sum_{i=1}^{n}G_i^* y_i^k
=
\sum_{i=1}^{n}
G_i^*w_i^k
   +
\sum_{i=1}^{n}
G_i^*(y_i^k - w_i^k)
\to 0.
\]
Finally, part 1 and the boundedness of $G_i$ yield
\[
(\forall\,i=1,\ldots,n-1)  \quad\quad
x_i^k - G_i x_{n}^k 
= 
x_i^k - G_i z^k - G_i(x_{n}^k - z^k)
\to 0.
\]
Next we apply~\cite[Proposition 2.4]{alotaibi2014solving} with the following
change of notation where ``MM'' stands for ``maximal monotone'' and ``BL''
stands for ``bounded linear'':
	\begin{eqnarray*}
	    \text{\textbf{Notation here}} & & 
	    \text{\textbf{Notation in \cite{alotaibi2014solving}}} \\
		\text{iteration counter }k &\longrightarrow& \text{iteration counter }n
		\\
		x_{n}^k 
		&\longrightarrow & 
		a_n
		\\
		(x_1^k,\ldots ,x_{n-1}^k) 
		&\longrightarrow & 
		b_n
		\\
		y_{n}^k
		&\longrightarrow&
		a_n^*
		\\
		(y_1^k,\ldots,y_{n-1}^k)
		&\longrightarrow&
		b_n^*
		\\
		T_n 
	    &\longrightarrow &
	    A\text{ (MM operator)}
	    \\
		(x_1,\ldots ,x_{n-1}) \mapsto T_1 x_1 \times \cdots \times T_{n-1} x_{n-1}
		&\longrightarrow &
		B\text{ (MM operator)}
		\\
		z \mapsto (G_1 z,\ldots,G_{n-1} z)
		&\longrightarrow&
		L\text{ (BL operator)}
		\\
		z^\infty &\longrightarrow & \bar{x}
		\\
		{\bf w}^\infty &\longrightarrow & \bar{v}^*.
	\end{eqnarray*}
	We then conclude from~\cite[Proposition 2.4]{alotaibi2014solving} that
	$(z^\infty,{\bf w}^\infty)\in \calS$, and the claim is established.  

	Invoking Lemma~\ref{propFejer}(3), we immediately conclude that
	$\{(z^k,\bw^k)\}$ converges weakly to some $(\bar z,\overline{\bw}) \in
	\calS$.  For each $i=1,\ldots,n$, we finally observe that 
	since $G_i z^k - x_i^k\to 0$ and $y_i^k-w_i^k\to 0$, we also have
	$x_i^k\rightharpoonup G_i \bar z$ and $y_i^k\rightharpoonup \overline{w}_i$.
	\ourqed

\section{Extensions}
%!TEX root = forwProjMP-R1.tex
\subsection{Backtracking Linesearch}
\label{secLineSearch}
This section describes a backtracking linesearch procedure that may be
used in the forward steps when the Lipschitz constant is unknown. The
backtracking procedure is formalized in Algorithm \ref{AlgBackTrack}, to be
used in place of lines \ref{ForwardxUpdate}-\ref{ForwardyUpdate} of Algorithm \ref{AlgfullyAsync}. 
%We include error terms $e^{k}_i$ (as before) and $\hat{e}^{(j,k)}_i$ to model possibly inexact computation of the forward operators. The superscript $(j,k)$ refer to the $j^{\text{th}}$ iteration of Algorithm~\ref{AlgBackTrack}, within the $k^{\text{th}}$ iteration of Algorithm~\ref{AlgfullyAsync}.  Note that the errors $\hat{e}^{(j,k)}_i$ may differ in each iteration $j$ of the inner loop.

\begin{algorithm}[h]\label{AlgBackTrack}
	\DontPrintSemicolon
	\SetKwInOut{Input}{Input}
	\SetKwInOut{Output}{Output}
	\caption{Backtracking procedure for unknown Lipschitz constants}
	\label{AlgLineSearch}
	\Input{$i,k,z^{d(i,k)}$, $w_i^{d(i,k)}$, $\rho_i^{d(i,k)}$, $\Delta$}
	%\tcc{{\scriptsize $k$ is the iteration counter from Algorithm 
	% \ref{AlgfullyAsync} which is used here only to define the error sequences}}
	%$j = 1$, $t^1=1$, 
	% JE tried something simpler here without explicit increments and decrements of $j$
	$\rho_i^{(1,k)} = \rho_i^{d(i,k)}$\;
	$\theta_i^k = G_{i}z^{d(i,k)}$\;
	$\zeta_i^k = T_i \theta_i^k$\;
	\For{$j=1,2,\ldots$}{
	   $\tilde{x}_i^{(j,k)} 
	      = \theta_i^k - \rho_i^{(j,k)}(\zeta_i^k - w_i^{d(i,k)})$ \label{lineX}\;
       $\tilde{y}^{(j,k)}_i=T_i\tilde{x}^{(j,k)}_i$ \label{lineY}\;
       \If{\label{lineIf}$\Delta\|\theta_i^k-\tilde{x}_i^{(j,k)}\|^2 -
           \langle \theta_i^k-\tilde{x}_i^{(j,k)},\tilde{y}_i^{(j,k)}-w_i^{d(i,k)}\rangle
               \leq 0$}{
             \Return{ $J(i,k) \leftarrow j, \;
                       \hat{\rho}_i^{d(i,k)} \leftarrow \rho_i^{(j,k)}, \;
                       x_i^k \leftarrow \tilde{x}_i^{{(j,k)}}, \;
                       y_i^k \leftarrow \tilde{y}_i^{{(j,k)}}$}
                       \label{lineBTreturn}
           }
       $\rho_i^{(j+1,k)} = \rho_i^{(j,k)}/2$\;
	}
	% \While{$t^j>0$}
	% {
	% 	$\tilde{x}_i^{(j,k)} = \theta_i^k - \rho_i^{(j,k)}(\zeta_i^k - w_i^{d(i,k)})$\label{lineX}\;
	% 	$\tilde{y}^{(j,k)}_i=T_i\tilde{x}^{(j,k)}_i+\hat{e}^{(j,k)}_i$
	% 	\label{lineY}\;
 %        $\rho_i^{(j+1,k)} = \rho_i^{(j,k)}/2$\;
	% 	$t^{j+1} = \Delta\|\theta_i^k-\tilde{x}_i^{(j,k)}\|^2 -  \langle \theta_i^k-\tilde{x}_i^{(j,k)},\tilde{y}_i^{(j,k)}-w_i^{d(i,k)}\rangle$\;
 %        $j\leftarrow j+1$\;
	% }
	% \Output {$x_i^k \leftarrow \tilde{x}_i^{{(j-1,k)}},y_i^k \leftarrow \tilde{y}_i^{{(j-1,k)}}$}
\end{algorithm}

% Before commencing the analysis, we need to introduce new notation and slightly
% modify the assumptions. It is worth mentioning that the interpretation of
% $\rho_i^k$ for $i\in \Iforw$ is now changed. This is now the initial stepsize
% trialled in the backtracking linesearch procedure for the $i$th operator at
% iteration $k$.

We introduce the following notation: as suggested in line~\ref{lineBTreturn} of Algorithm \ref{AlgBackTrack},
we set $J(i,k)$ to be the number of iterations of the backtracking algorithm
for operator $i\in\Iforw$ at outer iteration $k\geq 1$; the subsequent theorem
will show that $J(i,k)$ can be upper bounded.  As also suggested in
line~\ref{lineBTreturn}, we let $\hat{\rho}_i^{d(i,k)} = \rho_i^{(J(i,k),k)}$
for $i \in \Iforw \cap \I_k$. When using the backtracking procedure for
$i\in\Iforw$, it is important to note that the interpretation of
$\rho_i^{d(i,k)}$ changes: it is the \emph{initial} trial stepsize value for the
$i^{\text{th}}$ operator at iteration $k$, and the actual stepsize used is
$\hat{\rho}_i^{d(i,k)}$.
When $i\notin I_k$, we  set $J(i,k) = 0$ and
$\hat{\rho}_i^{d(i,k)} = \rho_i^{d(i,k)}$.

\begin{assumption}\label{assStep2}
Lines \ref{ForwardxUpdate}-\ref{ForwardyUpdate} of
Algorithm~\ref{AlgfullyAsync} are replaced with the procedure in
Algorithm~\ref{AlgBackTrack}.  Regarding stepsizes, we assume that
\begin{align}
	\label{steps2_2}	%\label{steps1_2}
	 \overline{\rho} \triangleq \max_{i=1,\ldots,n}\left\{\sup_k\rho_i^k\right\}< \infty
\end{align}
and either:
\begin{align} \label{eqstepsLow1}
\underline{\rho} = \min_{i=1,\ldots,n}\left\{\inf_k\rho_i^k\right\}>0.
\end{align}
\col{or
\begin{align}\label{eqstepsLow2}
\rho_i^{d(i,1)}>0\quad\text{and}\quad(\forall k\geq 2):\quad \rho_i^{d(i,k)}\geq \hat{\rho}_i^{{d(i,k-1)}}.
\end{align}
}
\end{assumption}
\col{In words, \eqref{eqstepsLow2} allows us to initialize the linesearch with a stepsize which is at least as large as the previously discovered stepsize, which is a common procedure in practice. }
\begin{theorem}
	\label{thmBackTrack}
Suppose that Assumptions \ref{AssMonoProb}, \ref{assAsync}, \ref{assErr}, and
\ref{assStep2} hold. Then all the conclusions of Theorem \ref{ThmAsync}
follow. Specifically, either the algorithm terminates in a finite number of
iterations at point in $\calS$, or there exists $(\bar z,\overline{\bf
w})\in\calS$ such that $(z^k,{\bf w}^k)\rightharpoonup (\bar z,\overline{\bf w})$,
$x_i^k\rightharpoonup G_i \bar z$ and $y_i^k\rightharpoonup \overline{w}_i$
for all $i=1,\ldots,n-1$, $x_{n}^k\rightharpoonup \bar z$, and
$y_n^k\rightharpoonup -\sum_{i=1}^{n-1}G_i^*
\overline{w}_i$,
\end{theorem}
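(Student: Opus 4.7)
The plan is to observe that the backtracking procedure delivers stepsizes $\hat\rho_i^{d(i,k)}$ for $i\in\Iforw$ which enjoy essentially the same qualitative properties as the fixed forward stepsizes used in Theorem~\ref{ThmAsync}, after which the weak-convergence argument carries over with only notational changes. Specifically, I would establish two facts about the backtracking procedure and then recycle the three technical lemmas and the proof of Theorem~\ref{ThmAsync}.

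First, repeating the computation \eqref{fstep1}--\eqref{fstep3} on the trial pair $(\tilde x_i^{(j,k)},\tilde y_i^{(j,k)})$ produced on lines~\ref{lineX}--\ref{lineY} of Algorithm~\ref{AlgBackTrack}, with $\theta_i^k = G_iz^{d(i,k)}$ and $\zeta_i^k = T_i\theta_i^k$ playing the roles of $z^k$ and $T_i z^k$, one obtains
$$
\langle \theta_i^k - \tilde x_i^{(j,k)},\; \tilde y_i^{(j,k)} - w_i^{d(i,k)}\rangle \;\geq\; \Big(\tfrac{1}{\rho_i^{(j,k)}} - L_i\Big)\|\theta_i^k - \tilde x_i^{(j,k)}\|^2.
$$
Hence the test on line~\ref{lineIf} is satisfied as soon as $\rho_i^{(j,k)}\leq 1/(L_i+\Delta)$, which gives a uniform upper bound on $J(i,k)$ since $\rho_i^{(1,k)}\leq\overline\rho$ and trials halve. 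Second, if $J(i,k)\geq 2$ the prior trial $2\hat\rho_i^{d(i,k)}$ was rejected, forcing $2\hat\rho_i^{d(i,k)}>1/(L_i+\Delta)$; if $J(i,k)=1$, then $\hat\rho_i^{d(i,k)} = \rho_i^{d(i,k)} \geq\underline\rho$. In either case
$$
\underline{\hat\rho}_i \triangleq \min\left\{\underline\rho,\;\tfrac{1}{2(L_i+\Delta)}\right\} \;\leq\; \hat\rho_i^{d(i,k)} \;\leq\; \overline\rho.
$$

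With these two facts in hand, I would revisit the proof of Theorem~\ref{ThmAsync} with $\rho_i^{l(i,k)}$ replaced by $\hat\rho_i^{l(i,k)}$ for $i\in\Iforw$. Lemma~\ref{lemBoundedGrad} still bounds $\|\nabla\varphi_k\|_\gamma$ because $\hat\rho_i^{l(i,k)}$ is bounded both above and below. In Lemma~\ref{lemLowerBoundPhi}, the acceptance criterion on line~\ref{lineIf} directly yields $\langle G_iz^{l(i,k)} - x_i^k,\,y_i^k - w_i^{l(i,k)}\rangle \geq \Delta\|G_iz^{l(i,k)} - x_i^k\|^2$ for $i\in\Iforw$, so the lemma holds with $\xi_2 = \min\{(1-\sigma)/\overline\rho,\;\Delta\}$. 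In Lemma~\ref{lemLBPhi2}, the uniform lower bound $\underline{\hat\rho}_i$ on $\hat\rho_i^{l(i,k)}$ yields a positive $\xi_3$. The three-part convergence argument of Theorem~\ref{ThmAsync} then goes through essentially verbatim.

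The main obstacle I expect is the careful verification of the first fact in the presence of delays: the substitution $z^k \leftarrow G_iz^{d(i,k)}$ and $w_i^k \leftarrow w_i^{d(i,k)}$ in the derivation \eqref{fstep1}--\eqref{fstep3} must be justified. Fortunately those inequalities depend only on the Lipschitz constant of $T_i$ and on the algebraic identity \eqref{forwardidentity} holding for the specific trial stepsize $\rho_i^{(j,k)}$, both of which are insensitive to the delay indices; once this is observed, the remainder of the argument is bookkeeping and reuse of already-established lemmas.
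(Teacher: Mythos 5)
Your proposal is correct and follows essentially the same route as the paper's proof: derive the inequality $\langle \theta_i^k - \tilde x_i^{(j,k)},\tilde y_i^{(j,k)} - w_i^{d(i,k)}\rangle \geq \bigl((\rho_i^{(j,k)})^{-1}-L_i\bigr)\|\theta_i^k-\tilde x_i^{(j,k)}\|^2$ to bound $J(i,k)$ and obtain $\hat\rho_i^{d(i,k)}\geq\min\{\underline\rho,\,1/(2(L_i+\Delta))\}$, then rerun Lemmas~\ref{lemBoundedGrad}--\ref{lemLBPhi2} with adjusted constants (in particular $\xi_2'=\min\{(1-\sigma)\overline\rho^{-1},\Delta\}$, with the $\Delta$ term coming directly from the acceptance test) and reuse the three-part argument of Theorem~\ref{ThmAsync}. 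The only cosmetic difference is that you obtain the stepsize lower bound via the ``last rejected trial'' argument rather than by substituting the explicit bound on $J(i,k)$, which is equivalent.
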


\begin{proof}
	The proof of finite termination at an optimal point follows as before, via Lemma \ref{lemFiniteTerm}. From now on, suppose $\pi_k>0$ for all $k\geq M$ implying that the algorithm runs indefinitely. 
	
The proof proceeds along the following outline: first, we upper bound the
number of iterations of the loop in Algorithm~\ref{AlgBackTrack},  implying
that the stepsizes $\hat{\rho}_i^{d(i,k)}$ are bounded from above and below. We then
argue that lemmas~\ref{lemIsProject}-\ref{LemPsiAndPhi} hold as before. Then
we show that lemmas~\ref{lemBoundedGrad}-\ref{lemLBPhi2} essentially still
hold, but with different constants. The rest of the proof then proceeds 
identically to that of Theorem~\ref{ThmAsync}.

Regarding upper bounding the inner loop iterations, fix any $i\in\Iforw$. For
any $k \geq 1$ such that $i\in I_k$ and for any $j\geq 1$, substituting the values
just assigned to $\theta_i^k$ and $\zeta_i^k$ allows us to expand the forward step on
line~\ref{lineX} of Algorithm~\ref{AlgBackTrack} into
\begin{align*}
\tilde{x}_i^{(j,k)} = G_i z^{d(i,k)} - \rho_i^{(j,k)}(T_i G_i z^{d(i,k)} - w_i^{d(i,k)}).
\end{align*}
Following the arguments used to derive the $\Iforw$ terms in~\eqref{eqForLineS}, we have
\begin{equation}\label{eqKey4backTrack}
	\big(\big(\rho_{i}^{(j,k)}\big)^{-1}-L_i\big)
	   \|G_{i} z^{d(i,k)} - \tilde{x}_{i}^{(j,k)}\|^2
	-
	\langle G_{i} z^{d(i,k)} - \tilde{x}_{i}^{(j,k)},\tilde{y}_{i}^{(j,k)} - w_{i}^{d(i,k)}\rangle 
	\leq 
	0.
\end{equation}
Using that $\rho_i^{(j,k)} = 2^{1-j}\rho_i^{d(i,k)}$, some elementary algebraic
manipulations establish that once
\[
j \geq
\left\lceil 
1 + \log_2\!\left((\Delta+L_i)\rho^{d(i,k)}_{i}\right)
\right\rceil,
\]
one must have
$
\Delta \leq 
	\big(\rho_i^{(j,k)}\big)^{-1}-L_i,
$
and by~\eqref{eqKey4backTrack} the condition triggering the return statement
in Algorithm~\ref{AlgBackTrack} must be true.  Therefore, for any
$k\geq 1$ we have
\begin{align} 
J(i,k) 
&\leq
\max\left\{
\left\lceil 
1 + \log_2\!\left((\Delta+L_i)\rho^{d(i,k)}_{i}\right)
\right\rceil
,1\right\}
\nonumber\\\label{JikBound}
&\leq
\max\left\{
2 + \log_2\!\left((\Delta+L_i)\rho^{d(i,k)}_{i}\right)
,
1
\right\}.
\end{align}
By the condition $\overline{\rho} <
\infty$ in~\eqref{steps2_2}, we may now infer that $\{J(i,k)\}_{k\in\nN}$ is
bounded. 
Furthermore, by substituting~\eqref{JikBound}
into $\hat\rho_i^{d(i,k)} = 2^{1-J(i,k)}\rho_i^{d(i,k)}$, we may infer 
for all $k\geq 1$ that
\begin{eqnarray}\label{eqstart}
\hat{\rho}_i^{d(i,k)}\geq \min\left\{\frac{1}{2(L_i+\Delta)},\rho_i^{d(i,k)}\right\}.
\end{eqnarray}
If \eqref{eqstepsLow1} is enforced, then
\begin{align}
\hat{\rho}_i^{d(i,k)}\geq\min\left\{\frac{1}{2(L_i+\Delta)},\rho_i^{d(i,k)}\right\}
\geq \min\left\{\frac{1}{2(L_i+\Delta)},\underline{\rho}\right\}>0.\label{eqRhoHatLB}
\end{align}
\col{On the other hand, if~\eqref{eqstepsLow2} is enforced, then for all $k$
such that $i\in\mathcal{I}_k$, we have
%&\geq \min\left\{\frac{1}{2(L_i+\Delta)},\hat{\rho}_i^{d(i,k-1)}\right\}.
%\nonumber\\
%&
\begin{align}\label{eq2recurse}
\rho_i^{d(i,k+1)}\geq \hat{\rho}_i^{d(i,k)}
\geq \min\left\{\frac{1}{2(L_i+\Delta)},\rho_i^{d(i,k)}\right\}
\end{align}
If $k\notin\calI_k$ then $\hat{\rho}_i^{d(i,k)} = \rho_i^{d(i,k)}$ and $\rho_i^{d(i,k+1)}\geq \rho_i^{d(i,k)}$. Therefore, we may recurse \eqref{eq2recurse} to yield
\begin{align}
\label{eqRhoHatLB2}
\hat{\rho}_i^{d(i,k)}
\geq 
\min\left\{\frac{1}{2(L_i+\Delta)},\rho_i^{d(i,1)}\right\}>0.
\end{align}
}
Finally
since $\hat{\rho}_i^{d(i,k)}\leq \rho_i^{d(i,k)}\leq\bar{\rho}$ for all $k \geq 1$, we must have
%\begin{eqnarray}\label{eqRhoHatUB}
$$
\limsup_{k\to\infty} \{\hat{\rho}_i^{d(i,k)} \} \leq \bar{\rho}.
$$
%\end{eqnarray}
Since the choice of $i\in\Iforw$ was arbitrary, we know that
$\{\hat\rho_i^{d(i,k)}\}_{k\in\nN}$ is bounded for all $i\in\Iforw$, and the first
phase of the proof is complete.

We now turn to lemmas~\ref{lemIsProject}-\ref{LemPsiAndPhi}. First, Lemma
\ref{lemIsProject} still holds, since it remains true that $y_i^k = T_i x_i^k$
for all $i\in\Iforw$ and $k\geq M$.  Next, a result like that of
Lemma~\ref{lemUpdates} holds, but with  $\rho_i^{l(i,k)}$ replaced by  $\hat{\rho}_i^{l(i,k)}$ for all $i\in\Iforw$. The
arguments of Lemmas \ref{LemBoundOnl}-\ref{LemPsiAndPhi} remain completely
unchanged.
 
Next we show that Lemma \ref{lemBoundedGrad} holds with a different
constant.  The derivation leading up to (\ref{eqEndBounded}) continues to
apply if we incorporate the substitution in Lemma~\ref{lemUpdates} specified
in the previous paragraph. Therefore, we replace 
$\rho_i^k$ by $\hat{\rho}_i^k$ in~\eqref{eqEndBounded} for $i\in\Iforw$.  Using
\eqref{eqRhoHatLB}/\eqref{eqRhoHatLB2} and the fact that $\limsup_{k\to\infty} \{\hat{\rho}_i^{d(i,k)} \} \leq
\bar{\rho}$ we conclude that 
Lemma~ \ref{lemBoundedGrad} still holds, with the constant $\xi_1$ adjusted in
light of~\eqref{eqRhoHatLB}/\eqref{eqRhoHatLB2}. %and the substitution made for $e_i^{s(i,k)}$.

Now we show that Lemma~\ref{lemLowerBoundPhi} holds with a different
constant. For  $k\geq M$, we may use Lemma~\ref{lemUpdates} and the
termination criterion for Algorithm~\ref{AlgBackTrack} to write
\begin{eqnarray}
\psi_k
&=&
\sum_{i\in\Iback}
\left\langle 
G_i z^{l(i,k)} - x_i^k
,
y_i^k - w_i^{l(i,k)}
\right\rangle 
+
\sum_{i\in\Iforw}
\left\langle 
G_i z^{l(i,k)} - x_i^k
,
y_i^k - w_i^{l(i,k)}
\right\rangle 
\nonumber\\
&\geq&
(1-\sigma)\sum_{i\in\Iback}(\rho_i^k)^{-1}\|x_i^k - G_{i}z^{l(i,k)}\|^2
+
\Delta
\sum_{i\in\Iforw}
\|x_i^k - G_{i}z^{l(i,k)}\|^2.
\nonumber
%\label{eqVarphikLB}
\end{eqnarray}
Here, the terms involving $\Iback$ are dealt with the same way as before in
Lemma \ref{lemLowerBoundPhi}.
%the analysis leading up to (\ref{eqForLineS}) holds with $\rho_i^{l(i,k)}$ replaced by $\hat{\rho}_i^{l(i,k)}$ for $i\in\Iforw$. Rewriting (\ref{eqForLineS}) and using the termination criterion for Algorithm \ref{AlgLineSearch} yields
%\begin{eqnarray}\label{eqVarphikLB}
%	\psi_k
%	&\geq&
%		(1-\sigma)\sum_{i\in\Iback}(\rho_i^k)^{-1}\|x_i^k - G_{i}z^{l(i,k)}\|^2
%		+
%		\sum_{i\in\Iforw}
%	\left[
%	(1-\sigma)(\hat{\rho}^{l(i,k)}_{i})^{-1} - L_i - \hat{\sigma}
%	\right]
%	\|G_{i} z^{l(i,k)}-x^k_{i}\|^2.
%	\nonumber\\
%	&\geq&
%	(1-\sigma)\sum_{i\in\Iback}(\rho_i^k)^{-1}\|x_i^k - G_{i}z^{l(i,k)}\|^2 + \Delta\sum_{i\in\Iforw}\|x_{i}^k - G_{i} z^{l(i,k)}\|^2.
%\end{eqnarray}
We conclude that Lemma~\ref{lemLowerBoundPhi} holds with $\xi_2$ replaced by 
$$
\xi_2' = 
\min
\left\{
(1-\sigma)\overline{\rho}^{-1},\Delta
\right\}.
$$

Now we show that Lemma~\ref{lemLBPhi2} holds with a different constant.
The derivation up to~\eqref{eqPassLemFin} proceeds as before, but
replacing $\rho_i^{l(i,k)}$ with $\hat{\rho}_i^{l(i,k)}$ for $i\in\Iforw$. 
Using~\eqref{eqRhoHatLB}--\eqref{eqRhoHatLB2} and Assumption~\ref{assErr}, it
is clear that the conclusion of Lemma~\ref{lemLBPhi2} follows with the
constant $\xi_3$ adjusted in light of \eqref{eqRhoHatLB}--\eqref{eqRhoHatLB2}.

Finally, the rest of the proof now follows in the same way as in the proof of
Theorem~\ref{ThmAsync}.
%\textbf{JE: commented out rest of this paragraph as it should go away.}
%  with the following exception: In (\ref{eqUpperTri}) we must replace
%  $\hat{e}_i^{s(i,k)}$ with $\hat{\varepsilon}_i^{s(i,k)}$. Now let $J(i,k)\leq
%  J$ for all $k\geq 1$ and $i\in\Iforw$, which is possible because $J(i,k)$ is
%  bounded. Now $\|\hat{\epsilon}_i^{s(i,k)}\| \leq \max_{1\leq j \leq
%  J}\|\hat{e}_i^{(j,k)}\|$ for all $i\in\Iforw$ and $k\geq 1$. Since Assumption
%  \ref{assStep2} requires that $\hat{e}_i^{(j,k)}\to 0$ as $k\to\infty$ for all
%  $i\in\Iforw$, $j\geq 0$, we have that $\hat{\epsilon}_i^{s(i,k)}\to 0$.
\ourqed 
\end{proof}

%!TEX root = forwProjMP-R1.tex

\subsection{Backtracking is Unnecessary for Affine Operators}\label{secOptStep}
When $i\in\Iforw$ and $T_i$ affine, it is not necessary to iteratively
backtrack to find a valid stepsize.  Instead, it is possible to directly solve
for a stepsize $\rho = \rho_i^{(j,k)}$ such that the condition on
line~\ref{lineIf} of Algorithm~\ref{AlgLineSearch} is immediately satisfied.
Thus, one can process an affine operator with only two forward steps, even
without having estimated its Lipschitz constant.

From here on, we continue to use the notation $\theta_i^k = G_i z^{d(i,k)}$
and $\zeta_i^k = T_i\theta_i^k$ introduced in Algorithm~\ref{AlgBackTrack}.
Fix $i\in\Iforw$ and suppose that $T_i x = T_i^lx+c_i$ where $c_i\in \calH_i$
and $T_i^l$ is linear. 
% From Algorithm~\ref{AlgBackTrack}, we write
% \begin{align}
% \label{eqForwardRepeat}	\tilde{x}_i^{(j,k)} 
%     &= \theta_i^k - \rho_i^{(j,k)}(\zeta_i^k - w_i^{d(i,k)}) &
% \tilde{y}_i^{(j,k)} &= T_i \tilde{x}_i^{(j,k)}.
% \end{align}
% (recall the notation from Algorithm \ref{AlgBackTrack}: $\theta_i^k = G_i
% z^{d(i,k)}$ and $\zeta_i^k = T_i\theta_i^k$). 
The loop termination condition on line~\ref{lineIf} of
Algorithm~\ref{AlgBackTrack} may be written
\begin{align}\label{eqIfCond}
\langle
\theta_i^k-\tilde{x}_i^{(j,k)},\tilde{y}_i^{(j,k)}-w_i^{d(i,k)}\rangle
\geq
\Delta\|\theta_i^k-\tilde{x}_i^{(j,k)}\|^2.
\end{align}
Substituting the expressions for 
%$\theta_i^k-\tilde{x}_i^{(j,k)}$
$\tilde{x}_i^{(j,k)}$ and $\tilde{y}_i^{(j,k)}$ from
lines~\ref{lineX}-\ref{lineY} of Algorithm~\ref{AlgBackTrack} into the
left-hand side of~\eqref{eqIfCond}, replacing $\rho_i^{(i,j)}$ with $\rho$ for
simplicity, and using the linearity of $T_i^l$ yields
\begin{eqnarray}
&&
\rho 
\left\langle 
\zeta_i^k - w_i^{d(i,k)} 
,
T_i^l \!
\left(
\theta_i^k - \rho\big(T_i G_i z^{d(i,k)} - w_i^{d(i,k)}\big)
\right)
+c_i
-w_i^{d(i,k)}
\right\rangle 
\nonumber\\
&=&
\rho
\left\langle 
\zeta_i^k - w_i^{d(i,k)} 
,
T_i^l
\theta_i^k - \rho T_i^l\!\big(\zeta_i^k - w_i^{d(i,k)}\big)
+c_i
-w_i^{d(i,k)}
\right\rangle
\nonumber\\
&=&
\rho
\left\langle 
\zeta_i^k - w_i^{d(i,k)} 
,
\zeta_i^k 
-w_i^{d(i,k)}
- \rho T_i^l\!\big(\zeta_i^k - w_i^{d(i,k)}\big)
\right\rangle
\nonumber\\
&=&
\rho
\left(
\|\zeta_i^k - w_i^{d(i,k)} \|^2
-
\rho
\left\langle
\zeta_i^k - w_i^{d(i,k)} 
,
T_i^l\!\big(\zeta_i^k - w_i^{d(i,k)}\big)
\right\rangle 
\right).\label{eqLHS}
\end{eqnarray}
Substituting the expression for $\tilde{x}_i^{(i,j)}$ from line~\ref{lineX} of
Algorithm~\ref{AlgBackTrack}, the right-hand side of \eqref{eqIfCond} may be
written
\begin{align}\label{eqRHS}
\Delta\rho^2\|\zeta_i^k - w_i^{d(i,k)}\|^2.
\end{align}
Substituting~\eqref{eqLHS} and~\eqref{eqRHS} into~\eqref{eqIfCond} and solving
for $\rho$ yields that the loop exit condition holds when
\begin{equation}
\rho \leq \tilde{\rho}_i^k \triangleq
\frac{\|\zeta_i^k - w_i^{d(i,k)}\|^2}
{
\Delta\|\zeta_i^k - w_i^{d(i,k)}\|^2
+
\left\langle
\zeta_i^k - w_i^{d(i,k)} 
,
T_i^l\!\big(\zeta_i^k - w_i^{d(i,k)}\big)
\right\rangle 	
}.\label{eqAutoStep}
\end{equation}
If $\zeta_i^k - w_i^{d(i,k)} = 0$, then \eqref{eqAutoStep} is not defined,
but in this case the step acceptance condition~\eqref{eqIfCond} holds
trivially and lines~\ref{lineX}-\ref{lineY} of the backtracking procedure
yield $\tilde{x}_i^{(j,k)} = \theta_i^k$ and $\tilde y_i^{(j,k)} = \zeta_i^k$
for any stepsize $\rho_i^{(j,k)}$.

We next show that $\tilde{\rho}_i^k$ as defined in~\eqref{eqAutoStep} will
behave in a bounded manner even as $\zeta_i^k - w_i^{d(i,k)}\to 0$.
Temporarily letting $\xi = \zeta_i^k - w_i^{d(i,k)}$, we note that as long
as $\xi \neq 0$, we have
\begin{equation}
\tilde{\rho}_i^k
=
\frac{\norm{\xi}^2}{\Delta\norm{\xi}^2 + \inner{\xi}{T_i^l \xi}}
=
\frac{1}{\Delta + 
	\frac{\langle \xi,T_i^l\xi\rangle}{\|\xi\|^2}
}
\in
\left[
\frac{1}{\Delta+L_i}
,
\frac{1}{\Delta}
\right],\label{eqAutoBounds}
\end{equation}
where the inclusion follows because $T_i$ is monotone and thus $T_i^l$ is
positive semidefinite, and because $T_i$ is $L_i$-Lipschitz continuous and
therefore so is $T_i^l$.
%Thus if $\zeta_i^k - w_i^{d(i,k)} \to 0$,
%then $\tilde{\rho}_i^k \in 
%\left[\frac{1}{\Delta+L_i},\frac{1}{\Delta}\right]$. Furthermore
%the Lipschitz continuity of $T_i$ and thus $T_i^l$ then yields that
%\begin{equation} \label{eqAutoBounds}
%\frac{1}{\Delta+L_i}\leq \tilde{\rho}_i^k \leq\frac{1}{\Delta}.
%\end{equation} 
Thus, choosing $\tilde{\rho}_i^k$ to take some arbitrary 
fixed value $\bar{\rho} > 0$
whenever $\zeta_i^k - w_i^{d(i,k)}=0$, the sequence $\{\tilde{\rho}_i^k\}$
is bounded from both above and below, and all of the arguments of
Theorem~\ref{thmBackTrack} apply if we use $\tilde{\rho}_i^k$ in place of the
results of the backtracking line search.

To calculate~\eqref{eqAutoStep}, one must compute $\zeta_i^k =
T_iG_i z^{d(i,k)}$ and $T_i^l\big(\zeta_i^k - w_i^{d(i,k)}\big)$. Then
$x_i^k$ can be obtained via $x_i^{k} = \theta_i^k - \rho\big(\zeta_i^k
- w_i^{d(i,k)}\big)$ and 
% $y_i^k$ via
\begin{align}
y_i^k =
\zeta_i^k 
- \rho T_i^l\!\big(\zeta_i^k - w_i^{d(i,k)}\big).
\label{yNew}
 \end{align}
In total, this procedure requires one application of $G_i$ and two
%applications 
of $T_i^l$.

%% JE we don't actually use this anywhere now, I think...
%%
% In our numerical experiments, we have found that the slightly smaller stepsize
% \begin{align}
% \rho = \frac{\tilde{\rho}_i^k}{2} =
% \frac{(1/2)\|\zeta_i^k - w_i^{d(i,k)}\|^2}
% {
% 	\Delta\|\zeta_i^k - w_i^{d(i,k)}\|^2
% 	+
% 	\left\langle
% 	\zeta_i^k - w_i^{d(i,k)} 
% 	,
% 	T_i^l\big(\zeta_i^k - w_i^{d(i,k)}\big)
% 	\right\rangle 	
% }.\label{eqAutoStepMod}
% \end{align}
% leads to better overall
% performance.  By~\eqref{eqAutoStep}, this stepsize meets the backtracking
% acceptance condition, and by~\eqref{eqAutoBounds} it is also bounded above and
% below, so it also leads to a convergent algorithm.

%\input{optStep}

%PJ removed... Too negative?
%A similar approach can be used to select the stepsizes for the backward steps. However this does not normally result in a tractable optimization problem and (\ref{eqRhoProb}) must be tackled via a grid search or similar technique. Further we have not seen empirical improvements using the heuristic for the backward updates. 

%!TEX root = forwProjMP-R1.tex
\subsection{Greedy Block Selection}\label{secGreed}
We now introduce a greedy block selection strategy which may be useful in some
\col{block-iterative} implementations of Algorithm~\ref{AlgfullyAsync}, 
\col{such as Algorithm \ref{AlgBlockIter}. 
In essence, this selection strategy provides a way to pick $I_k$ at each
iteration in Algorithm \ref{AlgBlockIter}, and we have found it to improve
performance on several empirical tests.}

Consider Algorithm~\ref{AlgBlockIter} with 
$\lvert I_k \rvert = 1$ for all $k$ (only one
subproblem activated pe{}r iteration), and $\beta_k = 1$ for all $k$ (no
overrelaxation of the projection step).  Consider some particular iteration $k
\geq M$ and assume $\|\nabla\varphi_k\|>0$ (otherwise the algorithm terminates at 
a solution).  
%% JE the "previous section" may not be there any more
%As in the previous section, 
Ideally, 
one might like to maximize the length of
the step $p^{k+1} - p^k$ toward the solution set $\calS$, and
$\norm{p^{k+1}-p^k}_\gamma = \varphi_k(p^k)/\norm{\nabla
\varphi_k}_\gamma$.

% Consider Algorithm \ref{AlgfullyAsync} with $D=0$. We would like to derive a
% strategy for choosing $I_k$ at each iteration, i.e. for choosing the next
% blocks $(x_i^k,y_i^k)$ to update. As discussed in the previous section, at
% each iteration we want to construct a new separating hyperplane which is ``far
% away" from the current point, so as to make the most progress. A reasonable
% strategy is to find an affine function such that $\varphi_{k}(p^{k})$ is
% large. One way to do this is to update all of the blocks, i.e. all
% $(x_i^k,y_i^k)$, which will result in $\varphi_{k}$ lower bounded as in
% (\ref{eqVarphikLB}). However, suppose we want to make the most progress with
% just one block, then which would give the most progress?

%% JE  Not sure we are assuming $\beta_k=1$ before this now...
% Since we have assumed $\beta_k=1$, 

Assuming that $\beta_k=1$,
 the current point $p^{k}$ computed on
 lines~\ref{eqAlgproj1}-\ref{eqAlgproj2} of Algorithm~\ref{AlgfullyAsync} is
 the projection of $p^{k-1}$ onto the halfspace $\{p:\varphi_{k-1}(p)\leq
 0\}$.  If $p^{k-1}$ was not already in this halfspace, that is,
 $\varphi_{k-1}(p^{k-1}) > 0$, then after the projection we have
 $\varphi_{k-1}(p^k) = 0$.
 %Further suppose that $\varphi_{k-1}(p^{k}) = 0$ so that $p^k$ is on the boundary of the halfspace. 
 Using the notation $G_n = I$ and $w_n^k$ defined in~\eqref{defwn},
 $\varphi_{k-1}(p^k) = 0$ is equivalent to
\begin{equation}
\sum_{i=1}^{n}
\langle
G_i z^{k} - x_i^{k-1}
,
y_i^{k-1} - w_i^{k}
\rangle
= 
0.\label{eqGreedySum}
\end{equation}

%Assuming $p^k$ is not in $\calS$, then some of these terms will be strictly
%positive and some strictly negative, since if they are all equal to $0$ then
%the point $p^k$ is in $\calS$ by Lemma 2(4) of \cite{eckstein2017simplified}.

Suppose we select operator $i$ to be processed next, that is, $I_k=\{i\}$.
After updating $(x_i^k,y_i^k)$, the corresponding term in the summation
in~\eqref{eqGreedySum} becomes \col{bounded below by $\xi\|G_i z^k - x_i^k\|^2
\geq 0$, where $\xi = (1-\sigma)/\rho_i^k$ for $i\in \Iback$,
$\xi = \Delta$ for  $i\in \Iforw$ with backtracking, and $\xi =
 \bar{\rho}_i^{-1} - L_i$ for $i\in \Iforw$ without backtracking. In any
 case, processing operator $i$ will cause the $i^{\text{th}}$ term} to become
 nonnegative while the other terms remain unchanged, so if we select an $i$
 with $\langle G_i z^{k} - x_i^{k-1}, y_i^{k-1} - w_i^{k}\rangle < 0$, then
 the sum in~\eqref{eqGreedySum} must increase by at least $-
\langle G_i z^{k} - x_i^{k-1}, y_i^{k-1} - w_i^{k}\rangle$, meaning that after
processing subproblem $i$ we will have
\[
\varphi_k(p^k) \geq -
\langle G_i z^{k} - x_i^{k}, y_i^{k} - w_i^{k}\rangle > 0.
\]
Choosing the $i$ for which $\langle G_i z^{k} - x_i^{k-1}, y_i^{k-1} -
w_i^{k}\rangle$ is the most negative maximizes the above lower bound on
$\varphi_k(p^k)$ and would thus seem a promising heuristic for selecting $i$.

Note that this ``greedy'' procedure is only heuristic because it does not take
into account the denominator in the projection operation, nor how much
$\langle G_i z^{k} - x_i^{k}, y_i^{k} - w_i^{k}\rangle$ might exceed zero after processing block $i$.  Predicting this quantity for every block, however, might require 
% as much computation as 
% Note that this is a ``greedy" search, since the actual best block is the one which maximizes
%   $
%  \frac{1}{\rho_i}\|G_i z^k - x_i^k\|^2 - 
%  \langle
%  G_i z^{k} - x_i^{k-1}
%  ,
%  y_i^{k-1} - w_i^{k}
%  \rangle
%  $
%  for a backward step or
%  $
% \Delta\|G_i z^k - x_i^k\|^2
%  -
%  \langle
%  G_i z^{k} - x_i^{k-1}
%  ,
%  y_i^{k-1} - w_i^{k}
%  \rangle
%  $
%  for a forward step.
%  However computing this would 
essentially the same computation as evaluating a proximal or forward step for
 all blocks, after which we might as well update all blocks, that is, set $I_k
 = \{1,\ldots,n\}$. 
 %To initialize the method one needs to find a point $p^2$ and affine function s.t. $\varphi_1(p^2) = 0$ suggesting the method requires a full round of block updates during the first iteration. 

 In order to guarantee convergence under this block selection heuristic, we
 must include some safeguard to make sure that
 Assumption~\ref{assAsync}(\ref{quasicyclic}) holds.  One straightforward
 option is as follows: if a block has not been processed for more than $M>0$
 iterations, we must process it immediately regardless of the value of
 $\langle G_i z^{k} - x_i^{k-1}, y_i^{k-1} - w_i^{k}\rangle$. 
 % This provision
 % forces conformance to Assumption~\ref{assAsync}(1), so that convergence is
 % still assured by Theorem~\ref{ThmAsync}.
 % \col{Note that the strategy requires slightly more generality than is provided by Algorithm \ref{AlgBlockIter} as written. In order to comply with this safeguard, it may be necessary to update more than one block in an iteration, which is not allowed in Algorithm \ref{AlgBlockIter}.}

\subsection{Variable Metrics}
Looking at Lemmas \ref{PositivePhi} and \ref{lemLBPhi2}, it can be seen that
the update rules for $(x_i^k,y_i^k)$ can be abstracted. In fact any procedure
that returns a pair $(x_i^k,y_i^k)$ in the graph of $T_i$ satisfying, for some $\xi_4>0$,
\begin{align}\label{eqVMineq1}
&(\forall i=1,\ldots,n)\quad
\langle G_i z^{l(i,k)} - x_i^k,y_i^k - w_i^{l(i,k)}\rangle
\geq
\xi_4\|G_i z^{l(i,k)} - x_i^k\|^2
\\\label{eqVMineq2}
&(\forall i\in\Iback)\quad
\langle G_i z^{l(i,k)} - x_i^k,y_i^k - w_i^{l(i,k)}\rangle
\geq
\xi_4\|y_i^k - w_i^{l(i,k)}\|^2
\\
\nonumber 
&(\forall i\in\Iforw)\quad
\langle G_i z^{l(i,k)} - x_i^k,y_i^k - w_i^{l(i,k)}\rangle
+
L_i\|G_i z^{l(i,k)} - x_i^k\|^2
\\\label{eqVMineq3}
&
\qquad\qquad\qquad\qquad\quad\qquad\qquad\qquad\qquad
\geq\xi_4\|T_i G_i z^{l(i,k)} - w_i^{l(i,k)}\|^2
\end{align}
yields a convergent algorithm. As with lemmas \ref{PositivePhi} and
\ref{lemLBPhi2}, these inequalities need only hold in the limit.

An obvious way to make use of this abstraction is to introduce variable metrics. To simplify the following, we will ignore the error terms $e_i^k$ and assume no delays, i.e. $d(i,k)=k$. The updates on lines \ref{lineaupdate}--\ref{lineBackwardUpdateY} and \ref{LineForwardUpdate}--\ref{ForwardyUpdate} of Algorithm \ref{AlgfullyAsync} can be replaced with
\begin{align}
&(\forall i\in\Iback)&\quad
x_i^k + \rho_i^{k} U_i^k y_i^k 
&= G_i z^{k}+\rho_i^{k}U_i^k w_i^{k},\quad 
&y_i^k\in T_i x_i^k&,
\label{eqVMprox}\\\label{eqVMforw}
&(\forall i\in\Iforw)&\quad
x_i^k 
&= z^{k} - \rho_i^{k}U_i^k(T_i G_i z^{k} - w_i^{k}),\quad 
&y_i^k = T_i x_i^k&,
\end{align}
where $\{U_i^k:\calH_i\to\calH_i\}$ are a sequence of bounded linear self-adjoint operators such that 
\begin{align}
\forall i=1,\ldots,n, x\in \calH_i:
\quad  
\inf_{k\geq 1}\langle x,U_i^k x\rangle \geq \ulambda \|x\|^2
\text{ and }
\sup_{k\geq 1} \|U_i^k\|\leq \olambda \label{eqEig}
\end{align}
where $0<\ulambda,\olambda<\infty$. In the finite dimensional case, 
\eqref{eqEig} simply states that the eigenvalues of the set of matrices $\{U_i^k\}$ can be uniformly bounded away from $0$ and $+\infty$. 
It can be shown that using  \eqref{eqVMprox}--\eqref{eqVMforw} leads to the desired inequalities \eqref{eqVMineq1}--\eqref{eqVMineq3}. 

The new update \eqref{eqVMprox} can be written as
\begin{align}\label{eqNewProx}
x_i^k = (I+\rho_i^{k} U_i^k T_i)^{-1}
(
G_i z^{k}+\rho_i^{k}U_i^k w_i^{k}
).
\end{align} 
It was shown in~\cite[Lemma 3.7]{combettes2014variable} that this is a
proximal step with respect to $U_i^k T_i$ and that this operator is maximal
monotone under an appropriate inner product. Thus the update \eqref{eqNewProx}
is single valued with full domain and hence well-defined. In the optimization
context where $T_i = \partial f_i$ for closed convex proper $f_i$, solving
\eqref{eqNewProx} corresponds to the subproblem
\begin{align*}
\min_{x\in\calH_i} 
\left\{
\rho_i^k f_i(x)
+
\frac{1}{2}
\langle
(U_i^k)^{-1}(x-a)
,
x-a
\rangle\right\} 
\end{align*}
where 
$a = G_i z^{k}+\rho_i^{k}U_i^k w_i^{k}$.
For the variable-metric forward step~\eqref{eqVMforw}, the stepsize constraint
\eqref{eqForwUpper} must be replaced by $\rho_i^k < {1}/{\|U_i^k\|L_i}$.
 
%!TEX root = forwProjMP-R1.tex
\newcommand{\calT}{\mathcal{T}}
\newcommand{\bbeta}{\boldsymbol{\beta}}
\newcommand{\bgamma}{\boldsymbol{\gamma}}
\newcommand{\cp}{\texttt{cp-bt}}
\newcommand{\tseng}{\texttt{tseng-pd}}
\newcommand{\frb}{\texttt{frb-pd}}
\newcommand{\psb}{\texttt{psb}}
\newcommand{\psf}{\texttt{psf}}
\newcommand{\psfg}{\texttt{psf-g}}
\newcommand{ \psbg}{\texttt{psb-g}}

\section{Numerical Experiments}
\label{secNumerical}
We now present some preliminary numerical experiments with 
Algorithm~\ref{AlgBlockIter},
% serial variants of Algorithm~\ref{AlgfullyAsync} 
% (as given in Algorithm \ref{AlgBlockIter}). 
% In a serial setting, there is no need to allow for 
% communication delays, so we
% always have $d(i,k)=k$.
evaluating various strategies for selecting $I_k$
and comparing efficiency of forward and (approximate) backward steps. 
All our numerical experiments were implemented in Python (using
\texttt{numpy} and \texttt{scipy}) on an Intel Xeon workstation running Linux. 

\subsection{Rare Feature Selection}
The work in~\cite{2018arXiv180306675Y} studies the problem of 
utilizing rare features in
machine learning problems.  In this context, a ``rare feature'' is one whose value is 
rarely nonzero, making it hard to estimate the corresponding model
coefficients accurately. Despite this, such features can be highly
informative, so the standard practice of discarding them is wasteful. The
technique in \cite{2018arXiv180306675Y} overcomes this difficulty by making
use of an auxiliary tree data structure $\calT$ describing feature relatedness. 
Each leaf of the tree is a feature and two features' closeness on
the tree measures how ``related" they are. Closely related features can then
be aggregated (summed) so that more samples are captured, increasing the
accuracy of the coefficient estimate for a single coefficient for the aggregated 
features.

To formulate the resulting aggregation and fitting problem,
\cite{2018arXiv180306675Y} introduced the following generalized regression
problem:
\begin{align}\label{eqTheirTrip}
\min_{\substack{\bbeta\in \rR^d, \bgamma\in \rR^{|\calT|}}}
\Big\{ \left. \!
\ell(X\bbeta,b)
+
\lambda 
\big(
(1-\alpha)\|\bbeta\|_1
+
\alpha\|\bgamma_{-r}\|_1
\big)
~ \right| \,\, \bbeta = H\bgamma 
\Big\}
\end{align}
where $\ell:\rR^m\times \rR^m \to \rR$ is a loss function, $X\in
\rR^{m\times d}$ is the data matrix, $b\in\rR^m$ is the target (response)
vector, and $\bbeta\in\rR^d$ are the feature coefficients. Each $\bgamma_i$ is
associated with a node of the similarity tree $\calT$, and $\bgamma_{-r}$
denotes the subvector of $\bgamma$ corresponding to all nodes except the root
node. The matrix $H\in\rR^{d\times |\calT|}$ contains a $1$ in positions $i,j$
for those features $i$ which correspond to a leaf of $\calT$ that is descended
from node $j$, and elsewhere contains zeroes. Due to the constraint $\bbeta =
H\bgamma$, the coefficient $\bgamma_j$ of each tree node $j$ contributes
additively to the coefficient $\bbeta_i$ of each feature descended from $j$.
$H$ thus fuses coefficients together in the following way: if $\bgamma_i$ is
nonzero for a node $i$ and all descendants of $\bgamma_i$ in $\calT$ are $0$,
then all coefficients on the leaves which are descendant from $\bgamma_i$ are
equal (see \cite[Sec. 3.2]{2018arXiv180306675Y} for more details). The
$\ell_1$ norm on $\bgamma$ enforces sparsity of $\bgamma$, which in turn fuses
together coefficients in $\bbeta$ associated with similar features. The
$\ell_1$ norm on $\bbeta$ itself additionally enforces sparsity on these
coefficients, which is also desirable. The model can allow for an offset
variable by incorporating columns/rows of $1$'s and $0$'s in $X$ and $H$, but
for simplicity we omit the details.

\subsection{TripAdvisor Reviews}

We apply this model to a dataset of TripAdvisor reviews of hotels from
\cite{2018arXiv180306675Y}. The response variable was the overall review of
the hotel in the set $\{1,2,3,4,5\}$. 
The features were the counts of certain
adjectives in the review. Many adjectives were very rare, with $95\%$ of the
adjectives appearing in fewer than $5\%$ of the reviews. The authors
of~\cite{2018arXiv180306675Y} constructed a similarity tree using information
from word embeddings and emotion lexicon labels; there are
$7,\!573$ adjectives from the $209,\!987$ reviews and the tree $\calT$ had
$15,\!145$ nodes. A test set of $40,\!000$ examples was withheld, leaving a sparse
$169,\!987\times 7,\! 573$ \
%design 
matrix $X$ having only $0.32\%$
nonzero entries. The $7,\! 573\times 15,\!145 $ matrix  $H$ arising from the
similarity tree $\calT$ is also sparse, having $0.15\%$ nonzero entries. In
our implementation, we used the sparse matrix package \texttt{sparse} in
\texttt{scipy}. 

In \cite{2018arXiv180306675Y}, the elements of $b$ are the review ratings and
the loss function is given by the standard least-squares formula
$\ell(X\bbeta,b)=(1/2m)\|X\bbeta-b\|_2^2.$ To emphasize the advantages of our
new forward-step version of projective splitting over previous backward-step
versions, we instead use the same data and regularizers to construct a
classification problem with the logistic loss.  We assigned the $73,\! 987$
reviews with a rating of $5$ a value of $b_i=+1$, while we labeled the
$96,\!000$ reviews with value $4$ or less with $b_i=-1$. The loss is then
\begin{align}\label{eqLR}
\ell(X\bbeta,b) = 
  \frac{1}{m}\sum_{j=1}^m 
     \log\!\Big(1+\exp\!\big(\!-b_j \langle x_j,\bbeta\rangle\big)\Big)
\end{align}
where $x_j$ is the $j$th row of $X$.  The classification problem is then to
predict which reviews are associated with a rating of $5$.  

In practice, one typically would solve \eqref{eqTheirTrip} for many values of
$(\alpha,\lambda)$ and then choose the final model based on cross validation.
To assess the computational performance of the tested methods, we solve three
representative examples corresponding to sparse, medium, and dense solutions. 
The corresponding values for $\lambda$ were
\col{$\{10^{-8},10^{-6},10^{-4}\}$}.  In preliminary experiements, we found that the
value of $\alpha$ had little effect on algorithm performance, so we fixed
$\alpha=0.5$ for simplicity.

\subsection{Applying Projective Splitting}

The work in \cite{2018arXiv180306675Y} solves the problem \eqref{eqTheirTrip},
with $\ell$ set to the least-squares loss, using a specialized application
of the ADMM. The implementation involves precomputing the singular value
decompositions (SVDs) of the (large) matrices $X$ and $H$, and so does not fall
within the scope of standard first-order methods. Instead, we solve
\eqref{eqTheirTrip} with the logistic loss by simply eliminating $\bbeta$, so
that the formulation becomes
\begin{align}\label{eqOurTrip}
F^*\triangleq \min_{\substack{\bgamma\in \rR^{|\calT|}}} \!\!
%F(\bgamma,\beta_0)\triangleq 
\Big\{
\ell(X H \bgamma,b)
+
\lambda 
\left(
(1-\alpha)\|H\bgamma\|_1
+
\alpha\|\bgamma_{-r}\|_1
\right)
\Big\}.
\end{align}
To utilize  block-iterative updates in Algorithm \ref{AlgBlockIter}, we split
 up the loss function as follows: Let $\calR = \{R_1,..,R_P\}$ be an arbitrary
 partition of $\{1,\ldots,m\}$. For $i=1,\ldots,P$, let
 $X_i\in\mathbb{R}^{|R_i|\times d}$ be the submatrix of $X$ with rows
 corresponding to indices in $R_i$ and similarly let
 $b^i\in\mathbb{R}^{|R_i|}$ be the corresponding subvector of $b$.  Then
 \eqref{eqOurTrip} is equivalent to
\begin{align}\label{eqOurTripGreedy}
\min_{\substack{\bgamma\in \rR^{|\calT|}}} \!\!
%F(\bgamma,\beta_0)\triangleq 
\left\{
\sum_{i=1}^P
\ell(X_i H \bgamma,b^i)
+
\lambda 
\left(
(1-\alpha)\|H\bgamma\|_1
+
\alpha\|\bgamma_{-r}\|_1
\right)
\right\}.
\end{align}
There are several ways to formulate this problem as a special case of
\eqref{ProbOpt}, leading to different realizations of Algorithm
\ref{AlgBlockIter}. The approach that we found to give the best empirical
performance was to set \col{$n=P+3$ and
\begin{align*}
G_i &= H & f_i(t) &= \ell(X_it,b^i)  &i &=1,\ldots n-3 \\
G_{n-2} &= H & f_{n-2}(t)&=\lambda(1-\alpha)\|t\|_1 \\
G_{n-1} &=\tilde{G}
& f_{n-1}(t)&=\lambda\alpha\|t\|_1 \\
G_n &= I& f_n(t)&=0,
\end{align*}
where
\begin{align*}
\tilde{G} : [\bgamma_1 \;\; \bgamma_2 \;\; \cdots \;\; \bgamma_{|\calT|-1} \;\; \bgamma_{|\calT|}]
\mapsto
[\bgamma_1 \;\; \bgamma_2 \;\; \cdots \;\; \bgamma_{|\calT|-1}],
\end{align*}
and the last element of $\bgamma$, $\bgamma_{|\calT|}$, is the root of the tree.  
We append the trivial function $f_n = 0$ in order to comply with the requirement 
that the final linear operator $G_n$ be the identity; see 
\eqref{probCompMonoMany}. }
The functions $f_{n-2}$ and $f_{n-1}$ have easily-computed proximal operators,
so we process them at every iteration. \col{Further, the proximal operator
of $f_n$ has is simply the identity, so we also process it at each iteration. }
Therefore, $\{n-2,n-1,n\}\subseteq I_k$ for
all $k\geq 1$. On the other hand, the functions $f_i(t)$ for $i=1,\ldots,P$ are
$$
f_i(t) = \ell(X_i t,b) = \frac{1}{m}\sum_{j=1}^{|R_i|} 
   \log\!\Big(1+\exp\!\big(\!-\!b^i_j \langle x_{ij},t\rangle\big)\Big),
$$
where $x_{ij}$ is the $j$th row of the submatrix $X_i$ and $b_j^i$ is the
$j$th element of $b^i$. These functions are Lipschitz differentiable and so
may be processed by our new forward-step procedure. We use the backtracking
procedure in Section \ref{secLineSearch} so that we do not need to estimate
the Lipschitz constant of each $\ell_i$, a potentially costly computation
involving the SVD of each $X_i$. The most time-consuming part of each gradient
evaluation are two matrix multiplications, one by $X_i$ and one by $X_i^\top$.
We will refer to the approach of setting $\Iforw = \{1,\ldots,P\}$ and using
backtracking as ``Projective Splitting with Forward Steps'' (\psf).

On the other hand, even though the proximal operators of $f_1,\ldots,f_{P}$
lack a closed form, it is still possible to process these functions with an
approximate backward step. 
% This option is particularly attractive since our
% method tolerates errors in the computation of backward steps. 
The exact proximal map for $f_i$
is the solution to 
\begin{eqnarray}\label{eqProxLR}
\argmin_{t\in\rR^d} 
\left\{
\frac{1}{m}\sum_{j=1}^{|R_i|} 
   \log\!\Big(1+\exp\!\big(\!-\!b^i_j \langle x_{ij},t\rangle\big)\Big)
+
\frac{1}{2}\|t - u\|_2^2
\right\}.
\end{eqnarray}
This is an unconstrained nonlinear convex program and there are many different
ways one could approximately solve it. Since we are interested in scalable
first-order approaches, we chose the L-BFGS method --- see for
example~\cite{LBFGS80} --- which has small memory footprint and only requires
gradient and function evaluations. So, we choose some $\sigma\in[0,1)$ and
apply L-BFGS to solve \eqref{eqProxLR} until the relative error
criteria~\eqref{err1} and~\eqref{err2} are met.

For a given candidate solution $x_{i}^k$, we have $y_{i}^k = \nabla \ell (X_i
x_i^k,b_i)$, and the error can be explicitly computed as $e_{i}^k = x^k_{i}
+\rho_{i} y^k_{i} -(H z^{k}+\rho_{i} w_{i}^{k})$.  Every iteration of L-BFGS
requires at least one gradient and function evaluation, which in turn requires
two matrix multiplies, one by $X_i$ and one by $X_i^\top$. We ``warm-start"
 L-BFGS by initializing it at $x^{k-1}_i$. We will refer to this approach
as ``Projective Splitting with Backward Steps'' (\psb).

The coordination procedure (lines \ref{lineCoordStart}--\ref{lineCoordEnd}) is
the same for \psf\space and \psb, requiring two multiplies by $H$, two
by $H^\top$, vector additions, inner products, and scalar multiplications.

We tried $P=1$ and $P=10$, with each block chosen to have the same number of elements (to within $P$,
since $m$ is not divisible by $P$) of contiguous rows from $X$.
At each iteration, we
selected one block from among $1,\ldots,P$ for a forward step in \psf\space or
backward step with L-BFGS in \psb, and blocks $P+1$, $P+2$, and $P+3$ for backward steps.
Thus, $I_k$ always has the form $\{i,P+1,P+2,P+3\}$, with $1 \leq i\leq P$. To
select this $i$, we tested three strategies: the greedy block selection scheme described in
Section~\ref{secGreed}, choosing blocks at random, \col{and cycling through the blocks in a round-robin fashion}. 
For the greedy
scheme, we did not use the safeguard parameter $M$ as in practice we found
that every block was updated fairly regularly. 
%Aug20 - removed NULL iterations discussion as this is not how projSplitFit package handles it (it picks a block randomly). 
%If the first $P=n-2$ summands
%in \eqref{eqGreedySum} were all nonnegative, then in the greedy scheme we did
%not update any of the blocks $\{1,\ldots,P\}$. In these ``null" iterations,
%$I_k = \{P+1,P+2\}$, meaning that only the two regularizer blocks are updated,
%both involving fast proximal operations. 

We refer to the greedy variants with $P=10$ blocks as \texttt{psf-g} and \texttt{psb-g}, those with randomly selected blocks as \texttt{psf-r} and \texttt{psb-r}, \col{and those with cyclically selected blocks as \texttt{psf-c} and \texttt{psb-c}}. Finally, the versions with $P=1$ are referred to as \texttt{psf-1} and \texttt{psb-1}. 

%A situation that arose with the greedy block selection strategy was that it was possible for all the first $P$ summands in XX to be positive. The equation
%\begin{align*}
%0=\sum_{i=1}^{P+1}\langle A z^k - x_i^{k-1},y_i^{k-1}-w_i^k\rangle + 
%\langle z^k - x_{P+2}^{k-1},y_i^{k-1}-w_i^k\rangle 
%\end{align*}
%must hold for all $k\geq 2$. However, this
%only implies that at least one summand is negative. The first $P$ summands, over which the greedy search takes place, may all be positive.  In this case, we don't need to update any of the first $P$ blocks to generate a separating hyperplane.  In these ``null" iterates, we only need to update the last two blocks, which have very cheap proximal operators. 
% This still gives a valid separating hyperplane because either one or both of these blocks must have a negative summand, and therefore processing them yields a separating hyperplane (see section XX).

\subsection{The Competition}
To compare with our proposed methods, we restricted our attention to
algorithms with comparable features and benefits. In particular, we only
considered first-order methods which do not requre computing Lipschitz
constants of gradients and matrices. Very
few such methods apply to \eqref{eqOurTrip}. The presence of the matrix $H$ in
the term $\|H\bgamma\|_1$ makes it difficult to apply Davis-Yin three-operator
splitting~\cite{davis2015three} 
and related methods~\cite{pedregosa2018adaptive}, since the proximal
operator of this function cannot be computed in a simple way.  
We compared our projective splitting methods with
the following methods:
\newcommand{\proxsg}{\texttt{prox-sgd}}%Aug20 - removed proximal sgd. 
\begin{itemize}
\item The backtracking linesearch variant of the Chambolle-Pock primal-dual
splitting method \cite{malitsky2018first}, which we refer to as \cp.
\item The algorithm of \cite{combettes2012primal}. This approach is based 
on the
``monotone + skew" inclusion formulation obtained by first
defining the monotone operators 
\begin{align*}
T_1(\bbeta) &= \lambda(1-\alpha)\partial \|\bbeta\|_1 & 
T_2(\bgamma) &=\lambda\alpha\partial\|\bgamma_{-r}\|_1 &
T_3(\bgamma) &= \nabla_{\bgamma}\big[\ell (XH\bgamma,b)\big],
\end{align*}
and then formulating
the problem as $0\in \tilde{A}(z,w_1,w_2) + \tilde{B}(z,w_1,w_2)$, 
where $\tilde{A}$ and $\tilde{B}$ are defined by
\begin{align}\label{eqMonPlSk1}
\tilde{A}(z,w_1,w_2) &= \{0\} \times T_1^{-1}w_1 \times T_{2}^{-1}w_{2}
\\ \label{eqMonPlSk2}
\tilde{B}(z,w_1,w_2) &=
\left[
\begin{array}{c}
T_3(z)\\
0\\
0
\end{array}
\right]
+
\left[
\begin{array}{ccc}
0&H^{\top}&I\\
-H&0& 0\\
-I & 0 & 0
\end{array}
\right]
\left[
\begin{array}{c}
z\\
w_1\\
w_{2}
\end{array}
\right].
\end{align}
$\tilde{A}$ is maximal monotone, while $\tilde{B}$ is the sum of two
Lipshitz monotone operators (the second being skew linear), and therefore is also
Lipschitz monotone. The algorithm in \cite{combettes2012primal} is
essentially Tseng's forward-backward-forward method~\cite{tseng2000modified}
applied to this inclusion, using resolvent steps for $\tilde{A}$ and forward
steps for $\tilde{B}$. Thus, we call this method \tseng. In order to achieve
good performance with \tseng\space we had to incorporate a diagonal
preconditioner as proposed in \cite{vu2013variable}. We used the following preconditioner: 
\begin{align}\label{eqPreCond}
U = \text{diag}(I_{d\times d},\gamma_{pd} I_{d\times d},\gamma_{pd} I_{d\times d})
\end{align}
where $U$ is used as in \cite[Eq.~(3.2)]{vu2013variable} for \tseng.
\item The recently proposed forward-reflected-backward
method~\cite{tam2018forward}, applied to this same primal-dual inclusion $0\in
\tilde{A}(z,w_1,w_2) + \tilde{B}(z,w_1,w_2)$ specified by
\eqref{eqMonPlSk1}-\eqref{eqMonPlSk2}.  We call this method \frb. For
this method, we used the same preconditioner given in \eqref{eqPreCond},
used as $M^{-1}$ on \cite[p.~7]{tam2018forward}.
\end{itemize}

 \subsection{Algorithm Parameter Selection}

For \psf, we used the backtracking procedure of Section \ref{secLineSearch} with
$\Delta=1$ to determine $\rho_1^k,\ldots,\rho_{n-3}^k$.  \col{For the stepsizes associated with the regularizers, we simply set $\rho_{n-2}^k=\rho_{n-1}^k=\rho_n^k=1$. }  For backtracking in all methods, we set the trial
stepsize equal to the previously discovered stepsize. 
%For \psf, we used a procedure inspired by the classical forward-backward
%method to set $\rho_{n-1}^k$ and $\rho_{n}^k$:  we set both parameters equal
%to the average of the discovered backtracked stepsizes
%$\hat{\rho}_1^k,\ldots,\hat{\rho}^k_{n-2}$. %Aug20 - removed as don't do this anymore

\col{For \psb, we used 
$\rho_1^k=\ldots=\rho_{n}^k=1$ for simplicity.}
For the L-BFGS procedure in 
\psb, we set the history parameter to be $10$ (\emph{i.e.} 
the past $10$ variables and
gradients were used to approximate the Hessian). We used a Wolfe linesearch
with $C_1=10^{-4}$ and $C_2=0.9$.

Each tested method then had one additional tuning parameter: $\beta$ given in line 2.a of Algorithm 4
of~\cite{malitsky2018first} for \cp, $\gamma_{pd}$ given in \eqref{eqPreCond} for
\tseng\space and \frb, and $\gamma$
for \psf\space and \psb. The values we used are given in Table
\ref{table-tune}. These values were chosen by running each method for $2000$ 
iterations 
and picking the tuning parameter from $\{10^{-6},10^{-5},\ldots,10^5,10^6\}$
giving the smallest final function value. We then ran a longer
experiment (about 10 minutes) for each method, using the chosen tuning
parameter. The greedy, random, \col{cyclic,} and $1$-block 
variants of \psf\space and
\psb\space all used the same tuning parameter values.
\col{ 
\begin{table}[h]
	\centering
	\begin{tabular}{c|c|c|c|c}
		parameter & method & $\lambda=10^{-8}$ & $\lambda=10^{-6}$ & $\lambda=10^{-4}$\\
		\hline\hline 
		\vspace{-0.2cm}
		&&&&\\
		$\gamma$  & \psf & $10^{-5} $ & $10^{-6}$ & $10^{-4}$ \\	
		$\gamma$  & \psb & $10^{-4} $ & $10^{-4}$ & $10^{-4}$ \\	
		$\beta$  & \cp & $10^6$ & $10^6$ & $10$\\
		$\gamma_{pd}$ & \tseng& $1$ & $1$ & $10$ \\
		$\gamma_{pd}$ & \frb& $1$ & $1$ & $100$ \\
		\hline 
	\end{tabular}
	\caption{Tuning parameters for the \eqref{eqOurTrip} applied to TripAdvisor data.}
	\label{table-tune}
\end{table}
}
\subsection{Results}
In Figure \ref{Fig-results} we plot the \col{objective function values}
against elapsed wall-clock running time, excluding time to compute the plotted function values.
\col{For \psf\space and \psb, we computed function values for the primal variable $z^k$.}
For \cp, we computed the objective at $y^{k}$ as
given in \cite[Algorithm 4]{malitsky2018first}. For \tseng\space and
\frb, we computed the objective values for the primal iterate corresponding to $z$ in
\eqref{eqMonPlSk1}-\eqref{eqMonPlSk2}.

The best performing variants of projective splitting were \texttt{psf-g} and
 \texttt{psb-g}. In the left-hand plots in Figure~\ref{Fig-results}, we
 compare the performance of \texttt{psf-g}, \texttt{psf-r}, \texttt{psf-c}, and
 \texttt{psf-1}. This column of the figure demonstrates the superiority of the
 greedy variant (\texttt{psf-g}) and the usefulness of the block-iterative
 capabilities of projective splitting:  in particular, processing only one of
 the first $P$ blocks at each iteration, when this block is selected by the
 greedy heuristic as in \texttt{psf-g}, results in much better performance
 than the \texttt{psf-1} strategy of procesing the entire loss function at
 each iteration. \col{Further, the greedy heuristic outperforms both random 
 and cyclic selection.}
 % J E expanded on this to make sure that the referees get the point. 

% For $\lambda=10^{-2}$, both 
 %methods with $P=10$, \texttt{psf-r} and \texttt{psf-g}, perform better than
 %the method with $P=1$, \texttt{psf-1}. For this problem, even with randomly selected blocks,
 %there appears to be an advantage in breaking the problem up into smaller
 %parts as we do in  \eqref{eqOurTripGreedy}. A possible reason for this is
 %that the subproblems, whether forward or backward steps, are of smaller
 %computational cost, yet still appear to yield useful updates.  These results
 %confirm the usefulness of using block-iterative versions of projective
 %splitting.

 % (analogous to the well-known advantage
 % of stochastic gradient descent over the full gradient method).

The right-hand plots in the figure compare \cp, \tseng, and \frb\space to our methods
\texttt{psf-g} and \texttt{psb-g}. These plots suggest that \tseng,
\frb, and \cp\space are not particularly competitive on this problem. 
Our method \psfg\space is the fastest method on
all examples. \col{Our similar method using approximate backward steps, \psbg, is very close in performance to \psfg\space for $\lambda=10^{-6}$, but is slower for $\lambda=10^{-8}$ and $\lambda=10^{-4}$.} Furthermore,
\psfg\space is arguably far simpler to implement than \psbg: for \psbg, one
must select a method for approximately solving the nonlinear program
\eqref{eqProxLR} at each iteration. While we chose L-BFGS, there are many other possibilities, each with its own parameters. For L-BFGS, we had
to choose the history parameter, the type of linesearch condition to use, and
other parameters. After making these choices, one then must implement
the subproblem solver; one might also be able to use some existing
implementation, but (in theory, at least) care must be taken to make sure that
it terminates using the proper stopping criteria~\eqref{err1}
and~\eqref{err2}. By contrast, the implementation details of
\psfg\space are contained within this manuscript
%the resulting method has
%only one layer of iterations (as opposed to inner and outer iteration layers
%for \psbg), 
%% PJ: above is not strictly true, since the backtracking procedure could be considered inner iterations right?
and fewer choices need to be made.
% Thus, from a convenience point of view, \psfg\space is superior to
% \psbg\space on this problem. 
Overall, our experiments thus suggest that our new forward-step procedure can
improve the performance and usability of projective splitting.

\begin{figure}[h!]
	\includegraphics[width=0.49\linewidth]{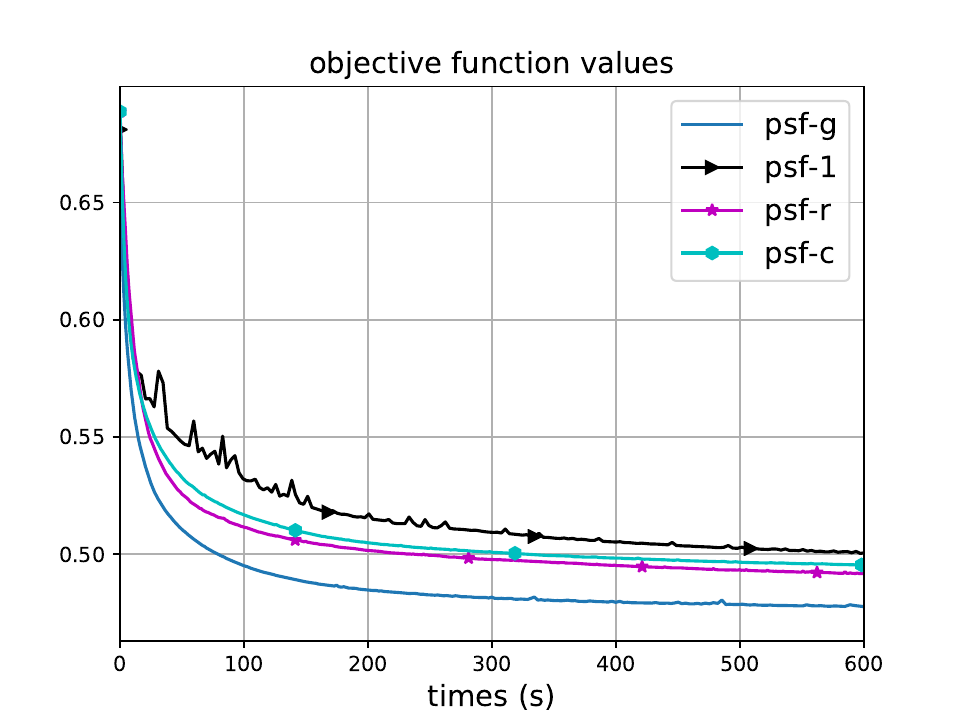}	
	\includegraphics[width=0.49\linewidth]{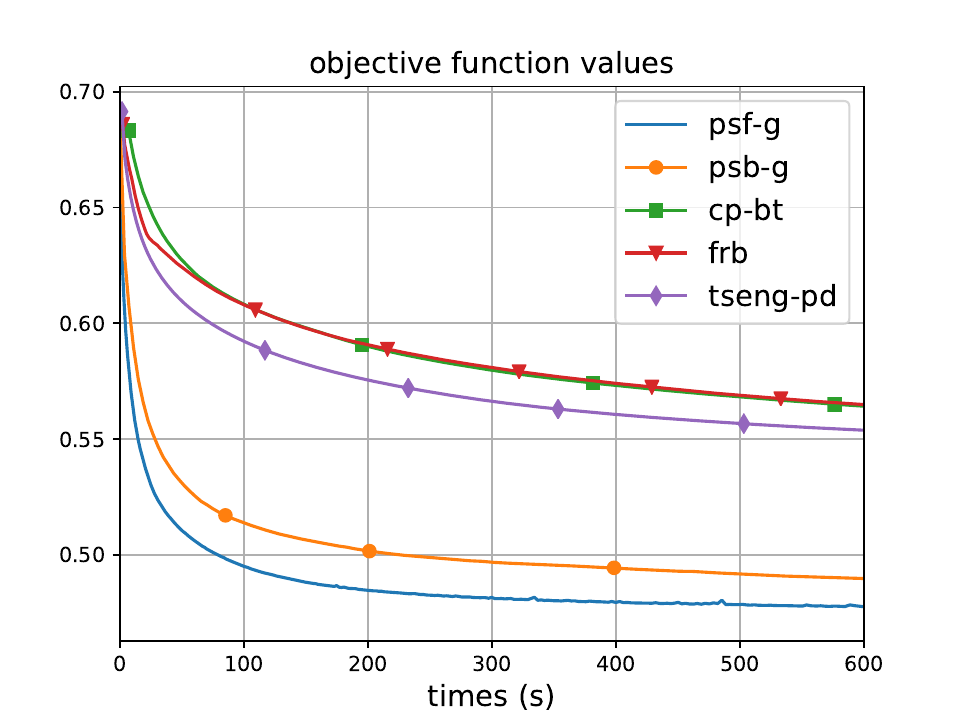}
	\includegraphics[width=0.49\linewidth]{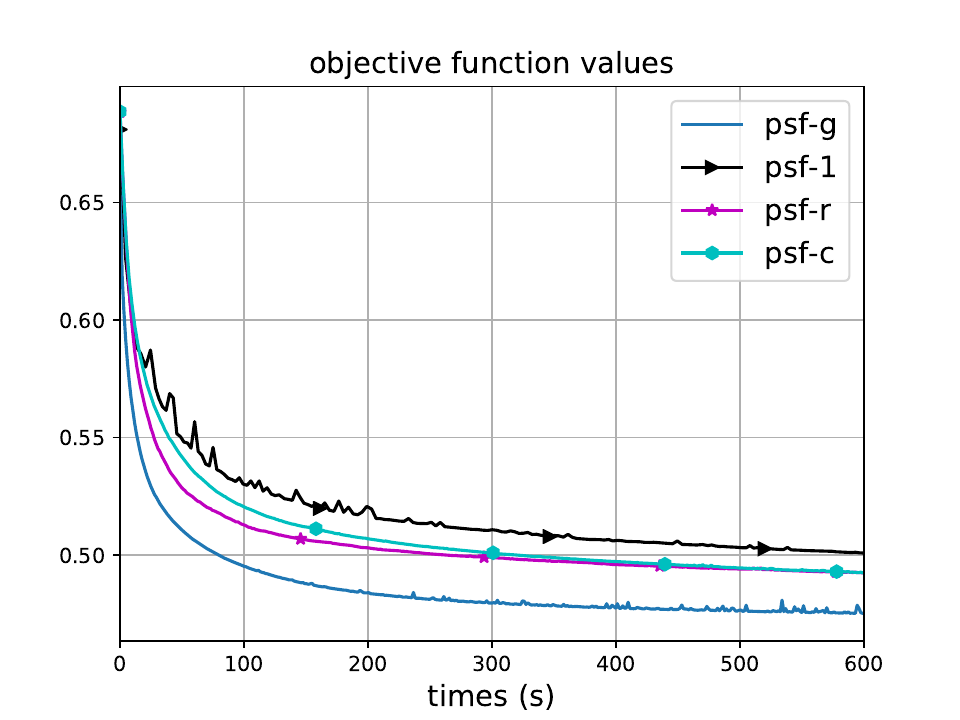}	
	\includegraphics[width=0.49\linewidth]{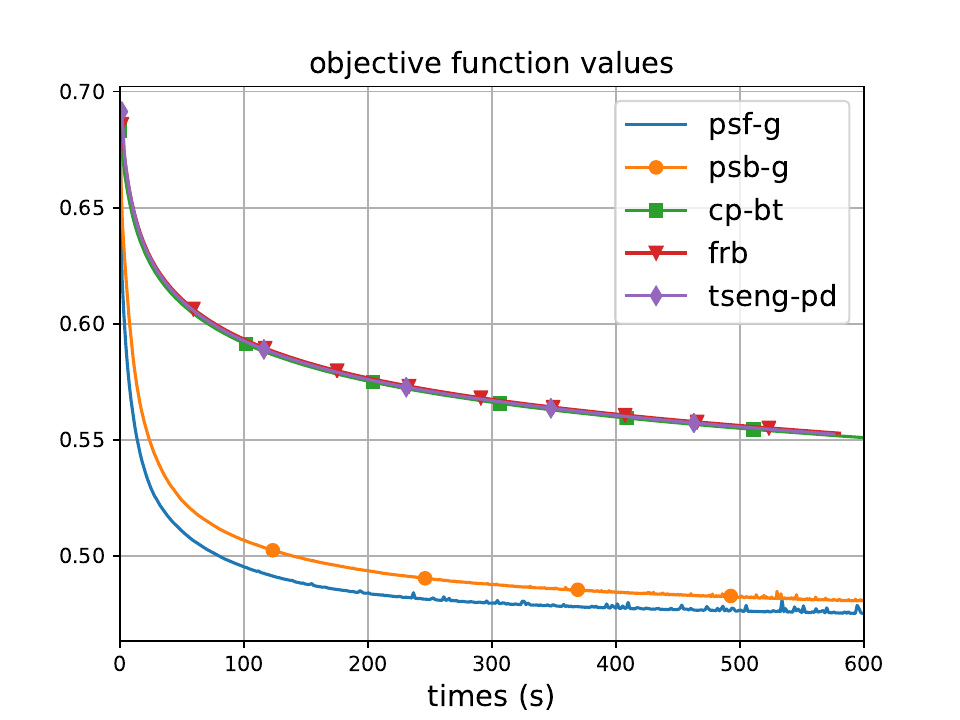}	
	\includegraphics[width=0.49\linewidth]{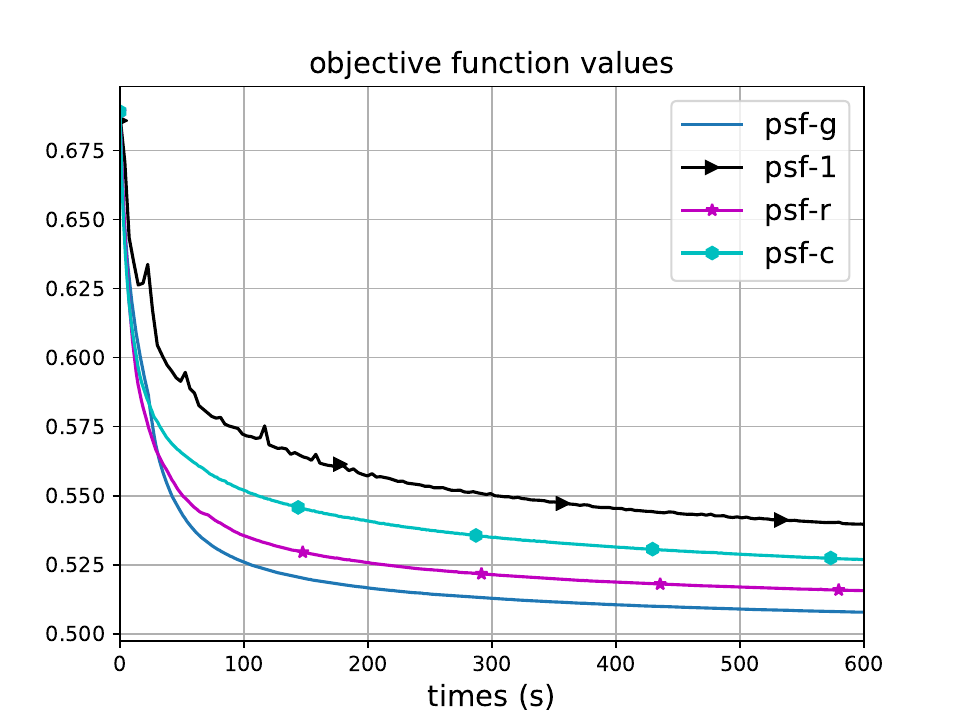}	
	\includegraphics[width=0.49\linewidth]{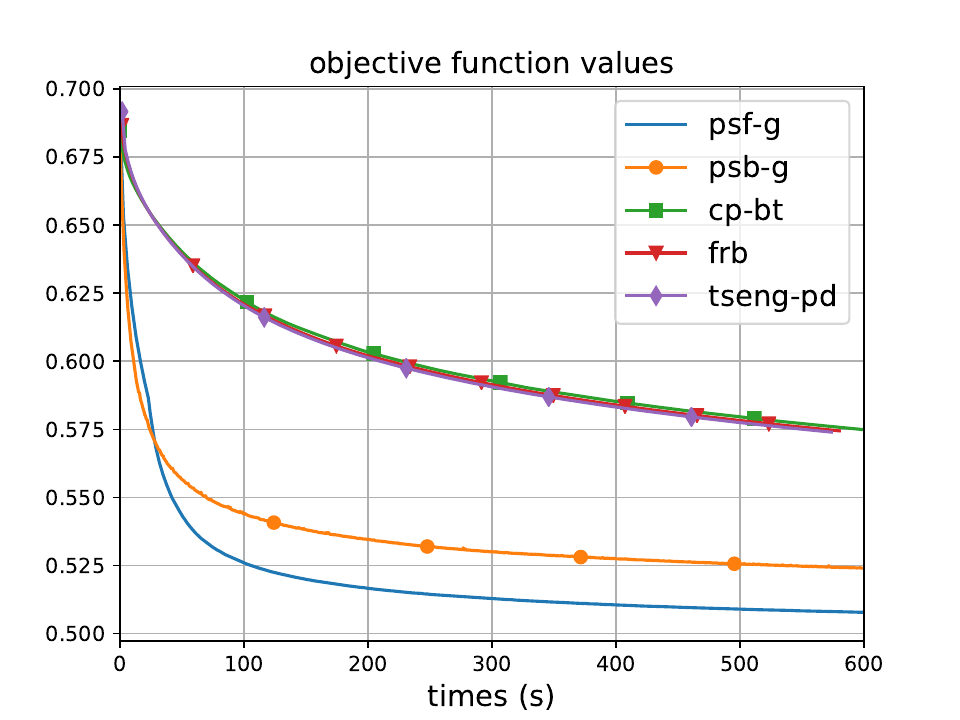}	
	\caption{\col{Objective values against wall-clock running time. Top row: $\lambda = 10^{-8}$, middle row: $\lambda=10^{-6}$, bottom row: $\lambda=10^{-4}$. }}	
	\label{Fig-results}
\end{figure}

\begin{acknowledgements}
This material is based upon work supported by the National Science 
Foundation under Grant No. 1617617. We thank Xiaohan Yan and Jacob Bien for kindly sharing their data for the
TripAdvisor reviews problem in Section \ref{secNumerical}.
\end{acknowledgements}
\bibliographystyle{spmpsci}
\bibliography{refs}

\begin{thebibliography}{10}
\providecommand{\url}[1]{{#1}}
\providecommand{\urlprefix}{URL }
\expandafter\ifx\csname urlstyle\endcsname\relax
  \providecommand{\doi}[1]{DOI~\discretionary{}{}{}#1}\else
  \providecommand{\doi}{DOI~\discretionary{}{}{}\begingroup
  \urlstyle{rm}\Url}\fi

\bibitem{alotaibi2014solving}
Alotaibi, A., Combettes, P.L., Shahzad, N.: Solving coupled composite monotone
  inclusions by successive {Fej\'er} approximations of their {Kuhn-Tucker} set.
\newblock SIAM Journal on Optimization \textbf{24}(4), 2076--2095 (2014)

\bibitem{bauschke2011convex}
Bauschke, H.H., Combettes, P.L.: Convex analysis and monotone operator theory
  in {Hilbert} spaces, 2nd edn.
\newblock Springer (2017)

\bibitem{beck2009fast}
Beck, A., Teboulle, M.: Fast gradient-based algorithms for constrained total
  variation image denoising and deblurring problems.
\newblock IEEE Transactions on Image Processing \textbf{18}(11), 2419--2434
  (2009)

\bibitem{boyd2011distributed}
Boyd, S., Parikh, N., Chu, E., Peleato, B., Eckstein, J.: Distributed
  optimization and statistical learning via the alternating direction method of
  multipliers.
\newblock Foundations and Trends in Machine Learning \textbf{3}(1), 1--122
  (2011)

\bibitem{briceno2011monotone+}
Brice{\~n}o-Arias, L.M., Combettes, P.L.: A monotone+ skew splitting model for
  composite monotone inclusions in duality.
\newblock SIAM Journal on Optimization \textbf{21}(4), 1230--1250 (2011)

\bibitem{chambolle2011first}
Chambolle, A., Pock, T.: A first-order primal-dual algorithm for convex
  problems with applications to imaging.
\newblock Journal of Mathematical Imaging and Vision \textbf{40}(1), 120--145
  (2011)

\bibitem{combettes2000fejerXX}
Combettes, P.L.: {F}ej{\'e}r monotonicity in convex optimization.
\newblock In: Encyclopedia of optimization, vol.~2, pp. 106--114. Springer
  Science \& Business Media (2001)

\bibitem{combettes2016async}
Combettes, P.L., Eckstein, J.: Asynchronous block-iterative primal-dual
  decomposition methods for monotone inclusions.
\newblock Mathematical Programming \textbf{168}(1-2), 645--672 (2018)

\bibitem{combettes2011proximal}
Combettes, P.L., Pesquet, J.C.: Proximal splitting methods in signal
  processing.
\newblock In: Fixed-point algorithms for inverse problems in science and
  engineering, pp. 185--212. Springer (2011)

\bibitem{combettes2012primal}
Combettes, P.L., Pesquet, J.C.: Primal-dual splitting algorithm for solving
  inclusions with mixtures of composite, {L}ipschitzian, and parallel-sum type
  monotone operators.
\newblock Set-Valued and variational analysis \textbf{20}(2), 307--330 (2012)

\bibitem{combettes2014variable}
Combettes, P.L., V{\~u}, B.C.: Variable metric forward--backward splitting with
  applications to monotone inclusions in duality.
\newblock Optimization \textbf{63}(9), 1289--1318 (2014)

\bibitem{combettes2005signal}
Combettes, P.L., Wajs, V.R.: Signal recovery by proximal forward-backward
  splitting.
\newblock Multiscale Modeling \& Simulation \textbf{4}(4), 1168--1200 (2005)

\bibitem{condat2013primal}
Condat, L.: {A primal--dual splitting method for convex optimization involving
  Lipschitzian, proximable and linear composite terms}.
\newblock Journal of Optimization Theory and Applications \textbf{158}(2),
  460--479 (2013)

\bibitem{davis2015three}
Davis, D., Yin, W.: A three-operator splitting scheme and its optimization
  applications.
\newblock Set-Valued and Variational Analysis \textbf{25}(4), 829–--858
  (2017)

\bibitem{eckstein2017simplified}
Eckstein, J.: A simplified form of block-iterative operator splitting and an
  asynchronous algorithm resembling the multi-block alternating direction
  method of multipliers.
\newblock Journal of Optimization Theory and Applications \textbf{173}(1),
  155--182 (2017)

\bibitem{eckstein2008family}
Eckstein, J., Svaiter, B.F.: A family of projective splitting methods for the
  sum of two maximal monotone operators.
\newblock Mathematical Programming \textbf{111}(1), 173--199 (2008)

\bibitem{eckstein2009general}
Eckstein, J., Svaiter, B.F.: General projective splitting methods for sums of
  maximal monotone operators.
\newblock SIAM Journal on Control and Optimization \textbf{48}(2), 787--811
  (2009)

\bibitem{eckstein2017approximate}
Eckstein, J., Yao, W.: {Approximate ADMM algorithms derived from Lagrangian
  splitting}.
\newblock Computational Optimization and Applications \textbf{68}(2), 363--405
  (2017)

\bibitem{eckstein2017relative}
Eckstein, J., Yao, W.: Relative-error approximate versions of
  {D}ouglas-{R}achford splitting and special cases of the {ADMM}.
\newblock Math. Program. \textbf{170}(2), 417--444 (2018)

\bibitem{iusem1997variant}
Iusem, A., Svaiter, B.: {A variant of Korpelevich's method for variational
  inequalities with a new search strategy}.
\newblock Optimization \textbf{42}(4), 309--321 (1997)

\bibitem{johnstone2018convergence}
Johnstone, P.R., Eckstein, J.: Convergence rates for projective splitting.
\newblock SIAM Journal on Optimization \textbf{29}(3), 1931--1957 (2019)

\bibitem{komodakis2015playing}
Komodakis, N., Pesquet, J.C.: Playing with duality: An overview of recent
  primal-dual approaches for solving large-scale optimization problems.
\newblock IEEE Signal Processing Magazine \textbf{32}(6), 31--54 (2015)

\bibitem{korpelevich1976extragradient}
Korpelevich, G.: The extragradient method for finding saddle points and other
  problems.
\newblock Matecon \textbf{12}, 747--756 (1976)

\bibitem{krasnosel1955two}
Krasnosel'skii, M.A.: Two remarks on the method of successive approximations.
\newblock Uspekhi Matematicheskikh Nauk \textbf{10}(1), 123--127 (1955)

\bibitem{lions1979splitting}
Lions, P.L., Mercier, B.: {Splitting algorithms for the sum of two nonlinear
  operators}.
\newblock SIAM Journal on Numerical Analysis \textbf{16}(6), 964--979 (1979)

\bibitem{malitsky2018first}
Malitsky, Y., Pock, T.: A first-order primal-dual algorithm with linesearch.
\newblock SIAM Journal on Optimization \textbf{28}(1), 411--432 (2018)

\bibitem{tam2018forward}
Malitsky, Y., Tam, M.K.: A forward-backward splitting method for monotone
  inclusions without cocoercivity.
\newblock arXiv preprint arXiv:1808.04162  (2018)

\bibitem{mann1953mean}
Mann, W.R.: Mean value methods in iteration.
\newblock Proceedings of the American Mathematical Society \textbf{4}(3),
  506--510 (1953)

\bibitem{mercier1979lectures}
Mercier, B., Vijayasundaram, G.: Lectures on topics in finite element solution
  of elliptic problems.
\newblock Tata Institute of Fundamental Research, Bombay (1979)

\bibitem{LBFGS80}
Nocedal, J.: Updating quasi-{N}ewton matrices with limited storage.
\newblock Math. Comp. \textbf{35}(151), 773--782 (1980)

\bibitem{pedregosa2018adaptive}
Pedregosa, F., Gidel, G.: Adaptive three operator splitting.
\newblock Tech. Rep. 1804.02339, arXiv (2018)

\bibitem{solodov1999hybrid}
Solodov, M.V., Svaiter, B.F.: A hybrid projection-proximal point algorithm.
\newblock Journal of convex analysis \textbf{6}(1), 59--70 (1999)

\bibitem{solodov1999new}
Solodov, M.V., Svaiter, B.F.: A new projection method for variational
  inequality problems.
\newblock SIAM Journal on Control and Optimization \textbf{37}(3), 765--776
  (1999)

\bibitem{TV15}
Tran-Dinh, Q., V\~{u}, B.C.: A new splitting method for solving composite
  monotone inclusions involving parallel-sum operators.
\newblock Preprint 1505.07946, arXiv (2015)

\bibitem{tseng2000modified}
Tseng, P.: A modified forward-backward splitting method for maximal monotone
  mappings.
\newblock SIAM Journal on Control and Optimization \textbf{38}(2), 431--446
  (2000)

\bibitem{vu2013splitting}
V{\~u}, B.C.: A splitting algorithm for dual monotone inclusions involving
  cocoercive operators.
\newblock Advances in Computational Mathematics \textbf{38}(3), 667--681 (2013)

\bibitem{vu2013variable}
V{\~u}, B.C.: A variable metric extension of the forward--backward--forward
  algorithm for monotone operators.
\newblock Numerical Functional Analysis and Optimization \textbf{34}(9),
  1050--1065 (2013)

\bibitem{2018arXiv180306675Y}
{Yan}, X., {Bien}, J.: {Rare Feature Selection in High Dimensions}.
\newblock arXiv preprint arXiv:1803.06675 (2018)

\end{thebibliography}

\end{document}